\newtheorem{theorem}{Theorem}[section]
\newtheorem{proposition}[theorem]{Proposition}
\newtheorem{lemma}[theorem]{Lemma}
\newtheorem{corollary}[theorem]{Corollary}
\theoremstyle{remark}
\newtheorem{remark}[theorem]{Remark}
\newtheorem{definition}[theorem]{Definition}
\numberwithin{equation}{section}
\newcommand{\wt}[1]{\widetilde{#1}}
\DeclareMathOperator{\SL}{SL}
\DeclareMathOperator{\Tr}{tr}
\newcommand{\PSL}{{\rm PSL}}
\newcommand{\dvector}[1]{{\left(\begin{matrix}#1\end{matrix}\right)}}
\newcommand{\R}{\mathbb{R}}
\newcommand{\C}{\mathbb{C}}
\newcommand{\CP}{\C P}
\newcommand{\N}{\mathbb{N}}
\newcommand{\Z}{\mathbb{Z}}
\begin{document}

\title[Holomorphic curves in compact quotients of $\mathrm{SL}(2,\C)$]{On
the existence of holomorphic curves in  compact quotients of $\mathrm{SL}(2,\C)$}

\author[I. Biswas]{Indranil Biswas}

\address{School of Mathematics, Tata Institute of 
Fundamental Research, Homi Bhabha Road, Mumbai 400005, India} 

\email{indranil@math.tifr.res.in}

\author[S. Dumitrescu]{Sorin Dumitrescu}

\address{Universit\'e C\^ote d'Azur, CNRS, LJAD, France}

\email{dumitres@unice.fr}

\author[L. Heller]{Lynn Heller}

\address{Institute of Differential Geometry, Leibniz 
Universit\"at Hannover, Welfengarten 1, 30167 Hannover} 

\email{lynn.heller@math.uni-hannover.de}

\author[S. Heller]{Sebastian Heller}

\address{Institute of Differential Geometry, Leibniz 
Universit\"at Hannover, Welfengarten 1, 30167 Hannover}

\email{seb.heller@gmail.com}

\subjclass[2020]{34M03, 34M56, 14H15, 53A55}

\keywords{Local system, character variety, cocompact lattice, monodromy,
parabolic bundle.}

\date{\today}

\begin{abstract} 
We prove the existence of a pair $(\Sigma ,\, \Gamma)$, where $\Sigma$ is a compact Riemann surface
with $\text{genus}(\Sigma)\, \geq\, 2$, and $\Gamma\, \subset\, {\rm SL}(2, \C)$ is a
cocompact lattice, such that there is a generically injective holomorphic map
$\Sigma \, \longrightarrow\, {\rm SL}(2, \C)/\Gamma$. This gives an affirmative answer to a question raised by
 Huckleberry and Winkelmann \cite{HW} and by Ghys \cite{Gh}.

\end{abstract}

\maketitle

\tableofcontents

\section{Introduction}

Compact complex manifolds with holomorphically trivial tangent bundle are known to be 
biholomorphic to a quotient of a complex Lie group $G$ by a discrete cocompact subgroup 
$\Gamma$ \cite{Wa}. These manifolds, also known as {\it parallelizable complex manifolds}, 
are K\"ahler if and only if the Lie group $G$ is abelian (in which case the manifold is a 
compact complex torus).

Whenever $G$ is a semi-simple Lie group, and $\Gamma\, \subset\, G$ is a
cocompact lattice, a theorem of Huckleberry and Margulis
\cite{HM} says that $G / \Gamma$ does not admit any complex analytic hypersurface.
In particular, the algebraic dimension of $G / \Gamma$ is zero, meaning $G / \Gamma$
does not admit any nonconstant meromorphic function.

An important class of examples consists of compact quotients of $G\,=\,\text{SL}(2, \C)$ by cocompact 
Kleinian subgroups $\Gamma\,\subset\, \text{SL}(2, \C)$. Since $\text{PSL}(2, \C)$ is the 
group of orientation preserving isometries of the hyperbolic $3$-space $\mathbb H^3$, the 
compact quotient $\text{SL}(2, \C)/ \Gamma$ is an unramified double cover of the 
$\text{SO}(3, \R)$-bundle of oriented orthonormal frames of the compact hyperbolic 
$3$-manifold $\mathbb H^3/ \Gamma$. While the embedding of $\Gamma$ into $\text{SL}(2, \C)$ 
is known to be rigid by Mostow's Theorem, the flexibility of the complex structure of 
$\text{SL}(2, \C)/ \Gamma$ was discovered by Ghys \cite{Gh} where he showed that the 
corresponding Kuranishi space has positive dimension for all $\Gamma$ with positive first 
Betti number. The corresponding compact hyperbolic $3$-manifolds can be constructed using 
Thurston's hyperbolisation Theorem (see, for instance, \cite{Th} and \cite[Lemme 6.2]{Gh} 
for constructions of compact hyperbolic $3$-manifolds with prescribed rational cohomology 
ring). Nevertheless, much of the interplay between the geometry of the compact hyperbolic 
$3$-manifold $\mathbb H^3/ \Gamma$ and the complex structure of its oriented orthonormal 
frame bundle $\text{PSL}(2, \C)/ \Gamma$ remains to be explored.

In course of his studies of the deformation space of the complex structures of $\text{SL}(2, 
\C)/ \Gamma$ Ghys \cite{Gh} encountered a problem previously raised by Huckleberry and 
Winkelmann \cite{HW} which would generalize the Huckleberry-Margulis Theorem \cite{HM} to 
holomorphic curves: Does there exist a compact 3-manifold $\text{SL}(2, \C)/ \Gamma$ 
admitting a compact holomorphic curve of genus $g \geq 2$? Note that the case of elliptic 
curves covered by one-parameters groups in $\text{SL}(2, \C)$ are well-known to exist 
in certain quotients $\text{SL}(2, \C)/ \Gamma$.

In this paper we give an affirmative answer to this open question following a strategy due 
to Ghys, see \cite{BDHH,CDHL}. We construct on the trivial holomorphic bundle of rank two 
over a Riemann surfaces $\Sigma$ an irreducible holomorphic $\text{SL}(2, 
\C)$-connection such that the image of the corresponding monodromy homomorphism lies in a 
cocompact lattice $\Gamma$ in $\text{SL}(2, \C)$. The parallel frame of this connection 
then gives rise to a holomorphic map from $\Sigma$ into the quotient $\text{SL}(2, \C)/ 
\Gamma$ which, due to the irreducibility of the connection, does not factor through any 
elliptic curve.  A first step towards realizing Ghys' strategy was previously made in 
\cite{BDHH} where holomorphic connections with (real) Fuchsian monodromy were constructed. 
In this paper we first show that every irreducible $\text{SL}(2, \R)$-representation with 
sufficient many symmetries can be realized as the monodromy of a holomorphic $\text{SL}(2, 
\C)$-connection. Then an example of such a symmetric representation contained in a cocompact 
lattice $\Gamma$ in $\text{SL}(2, \C)$ is given. We end with an outlook on the relationship 
between holomorphic curves in SL$(2, \C)/\Gamma$ and surfaces of constant mean curvature 
$H\,=\,1$ in the hyperbolic 3-space.

\section*{Acknowledgements}

The authors would like
to thank Bertrand Deroin for many useful joint discussions on the topic, which initiated our project. SH 
would like to thank Nick Schmitt for stimulating discussions about surfaces in hyperbolic 3-manifolds.

IB is partially supported by a J. C. Bose Fellowship. SD was partially supported by the 
French government through the UCAJEDI Investments in the Future project managed by the 
National Research Agency (ANR) with the reference number ANR2152IDEX201. LH is supported by 
the DFG grant HE 7914/2-1 of the DFG priority program SPP 2026 Geometry at Infinity. SH is 
supported by the DFG grant HE 6829/3-1 of the DFG priority program SPP 2026 Geometry at 
Infinity.

\section{Statements of the main theorems and strategy of proof}

Let $\Gamma\, \subset\, \SL(2, \C)$ be a cocompact lattice and consider the compact
complex $3$-manifold $N \,=\, \SL(2, \C)/\Gamma$. This three manifold can be viewed as the double covering of the
$\mathrm{SO}(3)$-frame bundle of the corresponding hyperbolic 3-orbifold $\mathbb H^3/\Gamma$ and
it is called the unitary frame bundle.
Let $\Sigma$ be a Riemann surface of genus $g\,\geq\, 2$, fix a base point $x_0\, \in\, \Sigma$ and consider an irreducible
representation
$$
\rho_\Sigma \,\colon\, \pi_1(\Sigma,\, x_0)
\,\longrightarrow\, \Gamma \,\subset\, \SL(2,\C).$$

Following an idea of Ghys,   we aim at realizing such a representation $\rho_\Sigma$ as the monodromy representation of a holomorphic flat connection $\nabla$ on the trivial holomorphic $\C^2$-bundle over $\Sigma$. Then the corresponding parallel frame $\Psi\colon \Sigma \rightarrow \SL(2, \C),$ with $\Psi(x_0) = $ Id induces a well-defined holomorphic map $f_{\rho_\Sigma}$ from $\Sigma$ into $N$ (with monodromy representation $\rho_\Sigma$).  The map $f_{\rho_\Sigma}$ does not factor through an elliptic curve since $\rho_\Sigma$ is
irreducible. 

In the following we impose various symmetries on $\Sigma$ and $\rho_\Sigma$. Let $\Sigma$ be the covering
of $\C P^1$ of degree $g+1$ defined by the equation
\begin{equation}\label{Sigmaa}
y^{g+1}\,=\,\frac{(z-p_1)(z-p_2)}{(z-p_3)(z-p_4)},
\end{equation} 
where
\begin{equation}\label{rp}
p_1\,=\, e^{i \varphi}, \quad p_2 \,=\, -e^{-i \varphi},\quad p_3 \,=
\,-e^{i \varphi},
\quad p_4 \,=\, e^{-i \varphi}
\end{equation}
with $\varphi\,\in\, (0,\, \pi/2)$, i.e.,  $\Sigma$ is totally branched
over $p_1,\, p_2,\,p_3,\, p_4$.
The representations $\rho_\Sigma \, =\,\widehat{\rho}$ we consider are compatible with the covering, i.e., it is
induced by the monodromy representation $\rho$ of a particular rank $2$
logarithmic connection $\nabla$ (described in Section \ref{ss:logconn}) over
the 4-punctured sphere 
\begin{equation}\label{S4}
S_4\,:=\, \C P^1 \setminus\{p_1,\, \dots,\, p_4\}.
\end{equation}
Fix a base point $s_0\, \in\, S_4$, and let
$\gamma_{p _j}\, \in\, \pi_1(S_4,\, s_0)$ be the curve that goes around $p _j$ anti-clockwise  for $j= 1, ..., 4$,
such that  $\gamma_{p_4}\gamma_{p_3}\gamma_{p_2}\gamma_{p_1}\,=\,1$ (see Section \ref{subs-4 punctured}).
Let $M _j$ denote the monodromy of $\nabla$ 
along $\gamma_{p _j}$. Then $\rho$ is assumed to satisfy the following RSR condition.

\begin{definition} \label{RSR-def}
An irreducible representation 
$\rho\,\colon\,\pi_1(S_4,\, s_0)\,\longrightarrow\,\SL(2,\C)$ 
is called RSR-representation if it has the following three properties:
\begin{itemize}
\item {\bf Real}:\, $\rho$ takes values in $\SL(2,\R)$ and
$$x\,=\, \Tr(M_1M_2)\,<\,-2,\ \, y\,=\, \Tr(M_2M_3)\,<\,-2,\ \, z\,=\, \Tr(M_1M_3)\,<\,-2,$$
where $M _j\,=\, \rho(\gamma_{p _j})$.

\item {\bf Symmetric}:\, the four monodromies $M_1,\,\cdots,\,M_4$ lie  in the same conjugacy class
determined by $\text{diag}(e^{2\pi i\wt{r}},\, e^{-2\pi i\wt{r}})$
for some $\wt{r}\,\in\, (\tfrac{1}{4},\,\tfrac{1}{2})$.

\item {\bf Rectangular}:\, $\text{tr}(M_1M_3)\,=\,\text{tr}(M_2M_4)$.
\end{itemize}
\end{definition}

Every $\rho\, \in\, {\rm Hom}(\pi_1(S_4,\, s_0),\, \SL(2,\C))$
such that $\rho(\gamma_{p _j})^k\,=\, 1$ for all  $j=1,\,...,\,4,$ for some integer $k\, >\, 0$,
lifts to a representation of $\pi_1(\Sigma,\, x_0)$ for the Riemann surface in
\eqref{Sigmaa} of suitable genus $g$, see Lemma \ref{trivialoncovering} for the details.
This motivates the definition of the genus of a
RSR-representation, see also \cite[Section 3]{BDHH}.

\begin{definition} \label{RSR-genus-def}
Let $\rho$ be a RSR-representation such that one (and hence all, as they have
same conjugacy class) of $M_1,\,\cdots,\,M_4$ have order
$k\,\in\,\N.$ Then, the {\bf genus} of $\rho$ is
$k-1,$ if $k$ is odd, and it is $\tfrac{k}{2}-1$ if $k$ is even.
\end{definition}

Since the symmetries are analogous to those of the 
Lawson minimal surfaces of genus $g$, we define:

\begin{definition} \label{RL}
Any homomorphism $\widehat\rho\, :\, \pi_1(\Sigma,\, x_0)\, \longrightarrow\,
\SL(2,\R)$ induced by some RSR-representation $\rho$ (defined in \eqref{Sigmaa}) 
is called {\bf real Lawson-symmetric} ({\bf RL}).
\end{definition}

Our main theorem is the following.

\begin{theorem}[{Main Theorem}]\label{maintheorem}
Let  $\rho$ be  a RSR-representation of genus $g$, and let $\Gamma$ be a compatible
cocompact lattice in ${\rm SL}(2, \C)$, meaning
$\Gamma$ contains the image of the corresponding RL-representation $\widehat\rho$.
Then there exists a genus g Riemann surface $\Sigma$
of the form \eqref{Sigmaa} and a holomorphic 
map from $\Sigma$ to the compact complex 3-manifold SL$(2, \C)/\Gamma$.
Moreover, the holomorphic map does not factor through a torus.
\end{theorem}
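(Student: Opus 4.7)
Following the plan already laid out in the introduction, I would prove the theorem by realizing $\widehat\rho$ as the monodromy of a holomorphic flat $\SL(2,\C)$-connection $\widehat\nabla$ on the trivial holomorphic rank-two bundle $\mathcal O_\Sigma^{\oplus 2}$. Once such a connection is at hand, parallel transport out of $x_0$, normalized to the identity at $x_0$, defines a holomorphic map $\wt\Sigma\to\SL(2,\C)$ equivariant for the deck action via $\widehat\rho$, which descends to a holomorphic map $f\colon\Sigma\to\SL(2,\C)/\Gamma$ since by hypothesis $\widehat\rho(\pi_1(\Sigma,x_0))\subset\Gamma$. Any factorization of $f$ through a torus would force $\widehat\rho$ to factor through an abelian quotient of $\pi_1(\Sigma,x_0)$, which is impossible because $\widehat\rho$ is irreducible.

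\textbf{Descent to the $4$-punctured sphere and cyclic pullback.} Rather than working directly on $\Sigma$, I would build the connection downstairs. Start with the rank-two logarithmic connection $\nabla$ on $\CP^1$ from Section \ref{ss:logconn} with monodromy $\rho$; by the symmetric condition, each local monodromy $M_j$ is conjugate to $\mathrm{diag}(e^{2\pi i \wt r}, e^{-2\pi i \wt r})$. The degree $g+1$ of the cyclic branched cover $\pi\colon\Sigma\to\CP^1$ from \eqref{Sigmaa} is chosen precisely so that $\pi^*\rho$ has trivial local monodromy around every preimage of a puncture (cf. Lemma \ref{trivialoncovering}), so $\pi^*\nabla$ extends to a genuine holomorphic flat $\SL(2,\C)$-connection $\widehat\nabla$ on a rank-two holomorphic bundle $\widehat V\to\Sigma$ of trivial determinant and monodromy $\widehat\rho$.

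\textbf{Trivializing $\widehat V$.} The real content of the argument is to arrange $\widehat V\cong \mathcal O_\Sigma^{\oplus 2}$. Riemann--Hilbert only provides topological triviality, and the holomorphic isomorphism class of $\widehat V$ generally varies with the choice of logarithmic extension of $\nabla$. The approach is to deform $\nabla$ inside a real-analytic family of parabolic logarithmic $\SL(2,\C)$-connections on $(\CP^1,\{p_1,\ldots,p_4\})$ respecting the full RSR symmetry: holding the residue conjugacy classes fixed, one varies the underlying parabolic structure and the connection so that the traces $x,y,z$ move along the RSR locus. Pulling the resulting family back by $\pi$ yields a family of pairs $(\widehat V,\widehat\nabla)$, and the goal is to exhibit a parameter at which $\widehat V$ is trivial. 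The rectangular symmetry $\Tr(M_1M_3)=\Tr(M_2M_4)$ restricts the family to a low-dimensional real-analytic locus on which the isomorphism type of $\widehat V$ can be controlled by a semicontinuity argument on Harder--Narasimhan data, combined with a comparison to the explicit Fuchsian model constructed in \cite{BDHH}. This should exhibit a parameter where $\widehat V$ has no destabilizing line subbundle and hence, via an additional $\SL(2,\R)$-reality argument ruling out the non-split extension of $\mathcal O_\Sigma$ by itself, is holomorphically trivial.

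\textbf{Principal obstacle.} The genuine difficulty lies squarely in the trivialization step. The Riemann--Hilbert correspondence controls only the monodromy, not the holomorphic structure of the bundle underlying a flat connection, and the RSR symmetries were built into the setup precisely to provide enough real-analytic rigidity to locate a point with trivial underlying bundle. The remainder is essentially formal: once $\widehat V$ is trivial, parallel transport delivers the holomorphic map $\Sigma\to\SL(2,\C)/\Gamma$, and irreducibility of $\widehat\rho$ prevents any factorization through a torus.
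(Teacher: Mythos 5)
Your overall frame (parallel transport of a holomorphic connection on $\mathcal O_\Sigma^{\oplus 2}$, descent to $\SL(2,\C)/\Gamma$, irreducibility ruling out factorization through a torus, and reduction to the $4$-punctured sphere via the cyclic cover) coincides with the paper's announced strategy, and you correctly identify that the entire difficulty is concentrated in arranging the pulled-back bundle to be holomorphically trivial. But that step, as you describe it, has a genuine gap. First, you propose to ``vary the connection so that the traces $x,y,z$ move along the RSR locus'' --- but the representation $\rho$ is \emph{prescribed} in the theorem, so its traces cannot move. The free parameter has to be the complex structure of the base (the angle $\varphi$ in \eqref{rp}, equivalently the modulus $\tau$ of the rectangular torus), and the statement to be proved is that for the \emph{fixed} $\rho$ there exists some $\tau_0$ at which a logarithmic connection with monodromy $\rho$ lives on the one distinguished parabolic bundle $H$ that lifts to $\mathcal O_\Sigma^{\oplus 2}$. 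Second, ``semicontinuity of Harder--Narasimhan data plus a reality argument'' does not locate such a point: semicontinuity only bounds instability on closed loci, whereas here one must hit the trivial bundle \emph{exactly} while simultaneously prescribing the real monodromy, and without further input there is no reason these two conditions are compatible for any $\tau$.

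What actually closes the gap in the paper is a chain of ideas absent from your proposal: (i) abelianization transports the problem to the $1$-punctured torus, where the $\eta$-symmetric real locus $\mathcal M_\R^r$ is one-dimensional with global coordinate $y=\Tr(Y)$, so that a single intermediate-value argument can suffice; (ii) on a fixed $T^2_\tau$, the uniformization oper $\nabla^U$ and a simple-grafted oper $\nabla^G$ lie on the real Hitchin section and induce \emph{different} spin structures (Lemmas \ref{fundamentallemma} and \ref{fundamentallemmatorus}), so by continuity of $q\mapsto\chi(\nabla^q)$ the section must pass through the $4$-spin bundle $H$ at some $q_H$ between $0$ and $q_G$ (Lemma \ref{onto}) --- this is the mechanism that produces a real-monodromy connection on the correct bundle at all; and (iii) a continuation argument in $\tau$, using that $\nabla^U(\tau)$ and $\nabla^G(\tau)$ sweep out the whole component (Lemmas \ref{Lemma:fullcomponent}, \ref{teichho}), that the ordering $y(\nabla^U)<y(\nabla^H)<y(\nabla^G)$ persists, that the Riemann--Hilbert map is a local diffeomorphism in $(a,\tau)$ (Lemma \ref{localdiffeo}), and a WKB estimate to preclude the parameter $a(t)$ from escaping to infinity, forces $y(\nabla^H(\tau))$ to attain the prescribed value $y(\widetilde\rho)$. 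Without (ii) you have no candidate connection on the trivializing bundle, and without (iii) you cannot match the prescribed monodromy; your proposal would need to supply substitutes for both.
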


\begin{remark} \label{remark-infty}
The techniques presented in this paper produce in fact infinitely many holomorphic maps from Riemann surfaces $\Sigma^n,$ $n \in \N,$ of the same genus into $\SL(2, \C)/\Gamma$ inducing the same RL-representation $\widehat\rho.$ To enhance clarity we discuss here only the simplest case arising from grafting once. 
\end{remark}

\begin{remark}
The assumption of the theorem is for example fulfilled if the 3-manifold $\mathbb H^3/\Gamma$ contains a totally geodesic surface of genus $g\geq 2$ with enough symmetries such that the  induced monodromy representation $\widehat\rho$ is RL. Note that in the example we give below, the representation $\widehat\rho$ is not Fuchsian. 
\end{remark}

We prove the following (see Theorem \ref{thm9.1}):

\begin{theorem}\label{pentagon}
Let $\Gamma$ be the cocompact  lattice in $\mathbb H^3$ given by the dodecahedron tiling $\{5,3,4\}$ of
the hyperbolic 3-space. Then there exist a holomorphic curve of genus $4$ in the compact 3-manifold SL$(2, \C)/\Gamma.$
\end{theorem}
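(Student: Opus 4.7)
The plan is to apply Theorem~\ref{maintheorem}. Since the target genus is $g = 4$, Definition~\ref{RSR-genus-def} forces the common order $k$ of the monodromies $M_1,\ldots,M_4$ to be either $5$ (odd case, $g = k-1$) or $10$ (even case, $g = k/2 - 1$); both match the five-fold rotational symmetry of the pentagonal faces of the dodecahedral tiling $\{5,3,4\}$. The work thus reduces to producing an RSR-representation $\rho\colon \pi_1(S_4, s_0) \to \SL(2,\R)$ whose monodromies lie in $\Gamma$ and have the appropriate order, so that the associated RL-representation $\widehat{\rho}$ of the cover \eqref{Sigmaa} has image in $\Gamma$.

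The $\{5,3,4\}$ honeycomb is right-angled (four dodecahedra meet along each edge, giving dihedral angle $\pi/2$), so every pentagonal face extends to a totally geodesic hyperbolic plane $P \subset \mathbb{H}^3$. After conjugating $P$ into a standard position, the orientation-preserving pointwise stabiliser of $P$ inside $\SL(2,\C)$ is a copy of $\SL(2,\R)$, and $\Gamma_0 := \mathrm{Stab}_\Gamma(P)$ is a cocompact Fuchsian subgroup of $\Gamma \cap \SL(2,\R)$. This $\Gamma_0$ contains lifts of the rotations of the required order about the centres of the pentagonal vertices, as well as the order-two elements produced by the reflection symmetries of the pentagonal face. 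Building $\rho$ inside $\Gamma_0$ gives the \textbf{Real} condition and the inclusion $\widehat\rho(\pi_1(\Sigma)) \subset \Gamma$ for free. Concretely, I would choose four conjugate elements $M_j \in \Gamma_0$ which rotate (by $4\pi/5$ or $2\pi/5$, chosen so that $\widetilde r \in (1/4, 1/2)$) about four points $q_1,q_2,q_3,q_4 \in P$ arranged symmetrically about two perpendicular geodesics of symmetry of the pentagonal face. The $\Z/2\times \Z/2$ symmetry makes all four $M_j$ conjugate, yielding the \textbf{Symmetric} condition, and immediately forces $\Tr(M_1 M_3) = \Tr(M_2 M_4)$, which is the \textbf{Rectangular} condition. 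One remaining parameter of the configuration (the spacing of the $q_j$, matching $\varphi \in (0,\pi/2)$ in \eqref{rp}) is then tuned to enforce the closing relation $M_4 M_3 M_2 M_1 = 1$, so that $\rho$ is a bona fide homomorphism on $\pi_1(S_4, s_0)$.

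The main obstacle is the verification of the remaining inequalities $x, y, z < -2$ in Definition~\ref{RSR-def}, together with the irreducibility of $\rho$. These are open conditions, but must actually be realised by a specific symmetric configuration inside the lattice $\Gamma_0$. The plan is to write each $M_i$ explicitly in $\SL(2,\R)$ in terms of its fixed point $q_i$ and the rotation angle, then use the standard formula for the trace of a product of two elliptic elements (in terms of the rotation angles and the hyperbolic distance $d(q_i, q_j)$) to translate $x, y, z < -2$ into inequalities on the distances $d(q_i, q_j)$, which amount to requiring that the products be hyperbolic rather than elliptic or parabolic. Because the rotation angles are fixed, these inequalities are satisfied for sufficiently spread-out rectangular configurations, and the task is to exhibit one such configuration whose closing constraint is solvable inside $\Gamma_0$. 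Once this explicit configuration is produced, Theorem~\ref{maintheorem} applies and yields a non-constant holomorphic map from a genus-$4$ Riemann surface of the form \eqref{Sigmaa} to $\SL(2,\C)/\Gamma$ that does not factor through a torus, proving Theorem~\ref{pentagon}.
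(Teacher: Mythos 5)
Your overall strategy is the same as the paper's: locate an RSR-representation inside the Fuchsian subgroup of $\Gamma$ stabilizing the totally geodesic plane spanned by a pentagonal face of the right-angled dodecahedron, and then invoke Theorem \ref{maintheorem}. The identification of the relevant orders ($k=5$ or $k=10$ in Definition \ref{RSR-genus-def}), the role of the right angles in producing a cocompact Fuchsian subgroup $\Gamma_0$, and the observation that a $\Z/2\times\Z/2$-symmetric configuration of four conjugate elliptic elements delivers the \textbf{Symmetric} and \textbf{Rectangular} conditions all match the paper's construction (there the four monodromies are $J_1=-(g_{0,2})^2$ and its conjugates under an order-$8$ element $j_0$, with $k=10$ and $\wt{r}=\tfrac{3}{10}$).

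However, there is a genuine gap at exactly the point you flag as ``the main obstacle,'' and it is not a routine verification: it is the entire content of the theorem. Your argument for closing the relation $M_4M_3M_2M_1=1$ relies on ``tuning'' a continuous spacing parameter, but $\Gamma_0$ is a discrete group, so the centres $q_j$ cannot be moved continuously --- they must sit at specific points of the pentagon tessellation, and for those specific points you must then check that the product relation holds, that the traces $\Tr(M_1M_2)$, $\Tr(M_2M_3)$, $\Tr(M_1M_3)$ all lie strictly below $-2$ (which also yields irreducibility), and that the common order is $10$ rather than $5$ so that Definition \ref{RSR-genus-def} gives genus $4$. None of this follows from ``sufficiently spread-out configurations satisfy the inequalities'': the lattice might simply fail to contain a spread-out rectangular configuration that also closes up. The paper resolves this by explicit computation in the hyperboloid model: it writes down the reflections of the $(5,3,4)$ characteristic tetrahedron, lifts the rotation subgroup to $\SL(2,\C)$, sets $J_1=-(g_{0,2})^2$ and $J_{k+1}=j_0J_kj_0^{-1}$, and verifies by direct matrix arithmetic that $J_4J_3J_2J_1=\mathrm{Id}$, that $\Tr(J_2J_1)=\Tr(J_3J_2)=-1-\sqrt{5}$ and $\Tr(J_3J_1)=\Tr(J_4J_2)=-\tfrac{3}{2}(1+\sqrt{5})$, and that $\Tr(J_k)=\tfrac{1}{2}(1-\sqrt{5})=2\cos\bigl(2\pi\cdot\tfrac{3}{10}\bigr)$ with each $J_k$ of order $10$. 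Until you exhibit such explicit elements (or give a genuine existence argument inside the discrete group), the proof is incomplete.
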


\begin{remark}
The RSR-representation $\rho$ is obtained from the pentagon tiling of $\mathbb H^2 \subset \mathbb H^3$ which can be extended to the dodecahedron tiling of $\mathbb H^3.$ The genus of the RSR-representation is 4. The holomorphic map obtained from this example has 4 simple branch points and by Riemann-Hurwitz it cannot factor through a lower genus surface.
\end{remark}

The symmetry assumptions in Theorem \ref{maintheorem} ensure that the moduli space 
$\mathcal M_{\R, sym}(\Sigma)$ of compatible equivariant $\SL(2, \R)$-representations, 
which contains $\rho$, is only (real) $1$-dimensional. Moreover, the compatible Riemann 
surface structures on $\Sigma$ are determined by rectangular tori $T^2_\tau \,=\,\C / \Z 
\oplus i\tau \Z$ for $\tau \,\in\, \R_{> 0}$ with one puncture $[0]
\,\in\, T^2_\tau$ given by an 
appropriate quotient of a Hitchin cover of $\Sigma$. The corresponding flat connections 
and their representations can therefore be investigated on $T^2_\tau$ instead of $\Sigma$. 
The details of the setup are explained in Section 
\ref{Setup}. To obtain a holomorphic map into $\SL(2, \C)/\Gamma$ we show that every 
element of $\mathcal M_{\R}^r(T^2_\tau)\cong \mathcal M_{\R,sym}(\Sigma)$ can be realized 
as the monodromy representation of a logarithmic connection $\nabla^H$ on a specific rank 
2 parabolic bundle $H$. Then, this connection over $T^2_\tau$ is shown to lift to a 
holomorphic connection on $\mathcal O^{\oplus2}$ over the Riemann surface $\Sigma $ with 
induced RL-monodromy $\widehat \rho.$

In a  first step we therefore construct a logarithmic connection $\nabla^H(\tau)$ with $\SL(2, \R)$-monodromy for every punctured torus $T^2_\tau\setminus\{ o \}$, compare with Theorem \ref{holoconnection}, with the prescribed parabolic structure $H$. This theorem is of independent interest and it is proven in Section  \ref{holo}. The main observation is that grafting, a procedure generating new real projective structures \footnote{complex projective structures with real monodromy}  from old ones,
 changes the induced spin structure on the surface. In this context it is very important to note that we perform grafting not on flat  projective bundles, but on their lifts
to flat vector bundles which yield the different spin structures.
We recall grafting  for compact Riemann surfaces in Section \ref{grafting} together with its straight forward generalization to the case of a 1-punctured torus. 
Using abelianization on a fixed $T^2_\tau$ and the fact that (every connected component of) the moduli space $\mathcal M_{\R}^r$  is diffeomorphic to $\R_{>0}$, we obtain (via  the intermediate value theorem) that 
there exists  a holomorphic connection $\nabla^H(\tau)$ (on the prescribed parabolic bundle) with real monodromy 
between the
 uniformization connection $\nabla^U$ of $T^2_\tau$ and another oper connection $\nabla^G$ of $T^2_\tau$ obtained from a simple-grafting of the uniformization connection of a different Riemann surface $T^2_{\tau^{uG}}$.

In a  second step,  fixing  a given representation $\rho\in \mathcal M^r_\R$, we start at an appropriate initial configuration of $\nabla^U, \nabla^H$ and $\nabla^G$ and vary the Riemann surface structure $\tau\in \R_{>0}$ of $T^2_\tau$. We show in Lemma \ref{Lemma:fullcomponent} and Lemma \ref{teichho} that both connections $\nabla^U(\tau)$ and $\nabla^G(\tau)$ sweep out the 1-dimensional moduli space of real representations $\mathcal M_{\R}^r$ (which contains $\rho$). Moreover,  $\mathcal M_{\R}^r$ is an ordered space and the ordering between the three connections $\nabla^U$, $\nabla^H$  and $\nabla^G$ is preserved through a continuous deformation.
Since furthermore the Riemann Hilbert mapping is a local diffeomorphism on the 1-punctured torus, see Lemma \ref{localdiffeo}, the dependence of $\nabla^H(\tau)$ in $\tau$ can be chosen to be continuous. Up to technicalities (which are taken care of), due to the fact that  $\tau$ is not necessarily a global coordinate on the submanifold $\mathcal M^r_\R\subset \mathcal M_{1,1}^r,$ the connection $\nabla^H(\tau)$ must also sweep out the moduli space $\mathcal M^r_\R$. In particular,  there exists a value   $\tau_0$ such that the  monodromy representation of $\nabla^H(\tau_0)$ is the prescribed representation  $\rho\in \mathcal M^r_\R$.
By replacing $\nabla^U(\tau)$ and $\nabla^G(\tau)$ with multiple graftings (which differ by a simple-grafting), our arguments show that for every $\rho \in \mathcal M_\R^r$ there exist infinitely many different $\tau_n\in \R_{>0},$ $n\in \N$, with holomorphic connections
$\nabla^H(\tau
_n)$ having  the same monodromy $\rho.$ To explain Remark \ref{remark-infty}, note that the Riemann surface structures of the tori $T^2_{\tau_n}$ and the Riemann surfaces
\eqref{Sigmaa} both 
degenerate as $n\to\infty$ and we obtain infinitely many different  holomorphic curves in the quotient of $\mathrm{SL}(2,\C)$ by the compatible cocompact lattice $\Gamma$.

Theorem \ref{maintheorem} would worth little if we could not
prove the existence of at least one RSR-representation compatible with a cocompact lattice $\Gamma.$ In the last section (Theorem \ref{thm9.1}) we explicitly construct an RSR-representation  and show that it is compatible with the  cocompact lattice of the dodecahedron  $\{5,3,4\}$ tessellation of the hyperbolic 3-space. We expect many more examples by investigating the existence of totally geodesic surfaces inside compact hyperbolic $3$-manifold with RSR-monodromy.

\section{Abelianization on Symmetric Riemann surfaces}\label{Setup}

\subsection{ Logarithmic connections and parabolic bundles}\label{ss:logconn}

Consider a compact connected Riemann surface $\Sigma$. Its
canonical line bundle will be denoted by $K_{\Sigma}$, while
${\mathcal O}_\Sigma$ will denote the sheaf of holomorphic functions on $\Sigma$.
A holomorphic $\rm{SL}(2, \C)$--bundle over $\Sigma$ is a rank 
two holomorphic vector bundle $V\,\longrightarrow \,\Sigma$ such that
the determinant line bundle $\det V\,:=\, \bigwedge^2 V$ is holomorphically
trivial.

Let ${\mathbf D}\,=\, p_1+\ldots +p_n$ be an effective reduced divisor, i.e.,  
the points $p _j \,\in\, \Sigma$ are 
pairwise distinct. Denote by ${\overline\partial}_V$ the Dolbeault operator for a 
holomorphic $\rm{SL}(2, \C)$--bundle $V$ on $\Sigma$; so the kernel of ${\overline\partial}_V$ 
defines the sheaf $\mathcal V$ of holomorphic sections of $V$. A \textit{logarithmic} $\rm{SL}(2, \C)$--connection 
$\nabla\,=\,\overline{\partial}_V+\partial^\nabla$ on $V$ with polar part contained in the 
divisor $\mathbf D$ is a holomorphic differential operator
$$
\partial^\nabla\, :\, \mathcal V\, \longrightarrow\, \mathcal V\otimes K_\Sigma\otimes {\mathcal O}_\Sigma({\mathbf D})
$$
such that
\begin{itemize}
\item the Leibniz rule $\partial^\nabla(fs)\,=\, f\partial^\nabla(s)+ s\otimes 
\partial{f}$ holds for all $s \in \mathcal V$ and $f \in {\mathcal O}_\Sigma$, and

\item the induced holomorphic connection on $\det V\,=\, {\mathcal O}_\Sigma$ 
coincides with the de Rham differential $d$ on ${\mathcal O}_\Sigma$.
\end{itemize}
Note that all logarithmic connections on 
$\Sigma$ are necessarily flat. At every point $p _j$
in the singular divisor ${\mathbf D}$ 
of a logarithmic $\rm{SL}(2, \C)$--connection $\nabla$ on $V$, the associated residue 
 \[\text{Res}_{p_j}(\nabla)\,\in\,\text{End}(V_{p_j})\] is tracefree. Let
$\rho_{j}$ and $-\rho_{j}$ be the eigenvalues of $\text{Res}_{p_j}(\nabla)$; the
logarithmic connection $\nabla$ is called non-resonant if $2\rho_{j}\,\notin\, \mathbb Z$
for all $j\,=\, 1,\,\cdots,\, n$. In the non-resonant case,
the local monodromy of $\nabla$ around $p_j$ is 
conjugate to the diagonal matrix with entries $\exp(\pm 2\pi i\rho_{j})$ 
(see \cite[p.~53, Th\'eor\`eme 1.17]{De}).

A {\it parabolic structure } $\mathcal P$ on a $\rm{SL}(2, \C)$--bundle $V$
over the divisor ${\mathbf D}$ is defined by a collection of complex  lines $L_j\, \subset\, V_{p_j}$
together with parabolic weights $r_j\, \in\, ]0,\, \tfrac{1}{2}[$ for all
$j\,=\, 1,\,\cdots, \,n$. For a parabolic structure $\mathcal P$, the divisor ${\mathbf D}$ is called the parabolic
divisor and $\{L_j\}_{j=1}^n$ are called the quasiparabolic lines.
The parabolic degree of  a holomorphic line subbundle $W\, \subset\, V$  is defined to be
\[\text{par-deg}(W)\,:=\, {\rm deg}(W)+\sum_{j=1}^n r^W_j\, ,\]
where $r^W_j\,= \, r_j$ if $W_{p_j}\,=\,L_j$, and $r^W_j\,=\,
-r_j$ if $W_{p_j}\,\neq\, L_j$.

\begin{definition}[\cite{MS, MY}]
A parabolic structure $\mathcal P$
on the $\rm{SL}(2, \C)$--bundle $V$ is called {\it stable} (respectively, {\it semistable}) if
$\text{par-deg}(W)\, <\, 0$ (respectively, $\text{par-deg}(W)\, \leq\, 0$)
for every holomorphic line subbundle $W\,\subset \, V$. A semistable parabolic bundle that is not stable is called \textit{strictly semistable}.
A parabolic bundle which is not semistable is called \textit{unstable}.
\end{definition}

Any non-resonant logarithmic $\rm{SL}(2, \C)$--connection $\nabla$ on $V$ for which 
all the residues have their eigenvalues in the interval
$(-\tfrac{1}{2},\, \tfrac{1}{2})$ induces a parabolic 
structure $\mathcal P$ on $V$. The parabolic divisor of $\mathcal P$ is
the singular locus $\mathbf D\,=\, p_1 
+\ldots + p_n$ of $\nabla$. The parabolic weight
at $p _j$ is the positive eigenvalue $\rho_j$ of $\text{Res}_{p_j}(\nabla)$ and
the quasiparabolic line at $p_j$ is the eigenline for $\rho_j$.
 
A {\it strongly parabolic Higgs field} on a parabolic
$\rm{SL}(2, \C)$--bundle $(V,\,{\mathcal P})$ is a holomorphic section
$$
\Phi \, \in\, H^0(\Sigma,\, \text{End}(V)\otimes K_{\Sigma}\otimes {\mathcal O}_{\Sigma}({\mathbf D}))
$$
such that $\text{tr}(\Phi)\,=\,0$ and
$$
\Phi(p_j)(V_{p _j})\, \subset\, L _j\otimes (K_{\Sigma}\otimes {\mathcal O}_{\Sigma}({\mathbf 
D}))_{p_j}
$$
for all $j\,=\, 1,\,\cdots,\, n$. These conditions imply that $\Phi(p_j)$ is nilpotent with the quasiparabolic lines 
$L _j\, \subset\, {\rm kernel}(\Phi(p_j))$ for all $j = 1,..., n$.

Two non-resonant logarithmic $\rm{SL}(2, \C)$--connections $\nabla_1$ and $\nabla_2$
on $V$ with polar part contained in ${\mathbf D}\,=\, p_1+\ldots +p_n$ induce the same parabolic structure on $V$ if and only if
$\nabla_1-\nabla_2$ is a strongly parabolic Higgs field for the parabolic
structure given by $\nabla_1$ (or equivalently, for the parabolic
structure given by $\nabla_2$).

A general result of Mehta and Seshadri \cite[p.~226, Theorem 4.1(2)]{MS}, and Biquard 
\cite[p.~246, Th\'eor\`eme 2.5]{Biq} (see also \cite[Theorem 3.2.2]{Pir}) implies that the 
above construction of associating a parabolic bundle to a logarithmic connection
actually produces a bijection between the space 
of isomorphism classes of irreducible flat ${\rm SU}(2)$--connections on $\Sigma \setminus 
{\mathbf D}$ and the stable parabolic $\rm{SL}(2, \C)$--bundles on $(\Sigma,\, {\mathbf D})$. As a 
consequence, every logarithmic connection $\nabla$ on $V$ giving rise to a stable parabolic 
$\rm{SL}(2, \C)$--structure $\mathcal P$ admits a unique strongly parabolic Higgs field
$\Phi$ on $(V,\, {\mathcal P})$ such that the monodromy representation of  $\nabla+\Phi$ is unitary.

\subsection{Flat $\rm{SL}(2, \C)$--connections on the $4$-punctured sphere} \label{subs-4 punctured}

Let ${\mathcal S}_4$ denote the Riemann sphere $\CP^1$ with four unordered  marked points
$$
{\mathcal S}_4 \, :=\, (\CP^1,\, \{p_1,\,\cdots ,\, p_4\})\, ,
$$
with $p _j$ as in \eqref{rp}
and  recall that 
\begin{equation}\label{s4}
S_4\, =\, \CP^1\setminus\{p_1,\,\cdots ,\, p_4\}\,
\end{equation}
is the underlying topological four-punctured sphere. Fix a base point $s_0\,\in\, S_4$. For every $j=1,\,...,\,4$, consider a simply closed,  
oriented, and $s_0$-based loop $\gamma_{p _j}$ going around a single puncture $p _j$.
The fundamental group $\pi_1(S_4, \,s_0)$ 
is generated by these curves $\gamma_{p _j}$ with $j=1,\,...,\,4$ and they satisfy the relation 
$\gamma_{p_4}\gamma_{p_3}\gamma_{p_2}\gamma_{p_1}\,=\,1$.

\noindent
\textbf{Convention.}\, For convenience, the composition of loops generating the fundamental
group operation is considered to be from right to left, i.e., $\gamma_2\gamma_1$ denotes the loop obtained by first
performing the loop $\gamma_1$ and then $\gamma_2$.

Every $\SL(2,\C)$-representation of $\pi_1(S_4, \,s_0)$ is determined by the images 
$M _j\,\in\,\SL(2,\C)$ of the generators $\gamma_{p _j} \,\in\, \pi_1(S_4, \,s_0)$, for 
$j=1,\,...,\,4$ and we have
\[M_4M_3M_2M_1\,=\,I.\]

We restrict to the symmetric case where 
\[\Tr(M _j)\,=\,2\cos(2\pi \wt{r}) ,\;\quad\forall\,\,\, \;j=1,\,...,\,4, \]
with $\wt{r}\in (\frac{1}{4},\,\frac{1}{2}).$ We denote by $\mathcal M_{0,4}^{\wt r}$ the space of equivalence classes of 
$\SL(2,\C)$-representation of $\pi_1(S_4, \,s_0)$ with local monodromy satisfying the above 
condition at all punctures. This $\mathcal M_{0,4}^{\wt r}$ is identified with the space of 
flat $\SL(2,\C)$-connections on the four-punctured sphere such that all four local monodromies 
are in the same conjugacy class given by
 \begin{equation}
\dvector{ e^{-2\pi i \wt r} &0 \\ 0& e^{2\pi i\wt r}}\,\in\,
{\rm SL}(2,{\mathbb C})\, .
\end{equation}

For $\rho\in \mathcal M_{0,4}^{\wt r}$, we denote by 
\[\wt x\,=\,\Tr(M_2M_1),\,\ \wt y\,=\,\Tr(M_3M_2),\,\ \wt z=\Tr(M_3M_1)\]
its {\em trace coordinates}.
They satisfy 
\begin{equation}\label{Fricke4}
\wt x^2+
\wt y^2 +\wt z^2 +\wt x\wt y\wt z-2\mu^2 (\wt x+\wt y+\wt z)+4(\mu^2 -1)+\mu^4 \,=\,0.
\end{equation}
The corresponding affine variety is called a  (relative) {\em character variety}. The following result of characterizing a representation by its image in the corresponding 
relative character variety is well-known and dates back to Fricke and Klein, see \cite{Gold88,BeG}. 
\begin{lemma}\label{lemma_uniqueness}
Let $(\wt x, \wt y, \wt z) \in \C^3$ satisfying equation \eqref{Fricke4}
and
$(\widetilde{x}-2)(\widetilde{y}-2)(\widetilde{z}-2)\,\neq\, 0$. Then, there exist a unique $\rho\in \mathcal M_{0,4}^{\wt r}$ such that $(\wt x, \wt y, \wt z)$ are the trace coordinates of $\rho$.
\end{lemma}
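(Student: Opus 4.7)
The statement is the classical Fricke--Klein characterisation of $\SL(2,\C)$-characters of the four-punctured sphere with prescribed conjugacy classes at the four punctures, restricted to the non-degenerate locus where $(\wt x-2)(\wt y-2)(\wt z-2)\neq 0$. The plan is to prove it by explicit matrix construction in the spirit of \cite{Gold88,BeG}, reducing the statement to a polynomial identity controlled by \eqref{Fricke4}.

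Set $\mu:=2\cos(2\pi\wt r)$ and $a:=e^{-2\pi i\wt r}$, so $a+a^{-1}=\mu$ and $a^2\neq 1$. Using the overall $\SL(2,\C)$-conjugation freedom I first normalise
\[
M_1\,=\,\begin{pmatrix} a & 0 \\ 0 & a^{-1}\end{pmatrix},
\]
after which the residual conjugation freedom is the diagonal torus. Writing $M_2=\begin{pmatrix} p & q \\ r & s\end{pmatrix}$, the equations $p+s=\mu$ and $pa+sa^{-1}=\wt x$ form a linear system in $(p,s)$ of determinant $a^{-1}-a\neq 0$, which determines $(p,s)$ uniquely, and then $qr=ps-1$ is also fixed. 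The case $qr=0$ forces $\{p,s\}=\{a,a^{-1}\}$ and hence $\wt x\in\{2,\mu^2-2\}$; the assumption $\wt x\neq 2$, combined with an a posteriori check excluding the residual reducible degeneration via $\wt y,\wt z$, allows the residual diagonal action to be used to normalise $r=1$, pinning $M_2$ down canonically.

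Next I solve for $M_3=\begin{pmatrix}\alpha & \beta\\ \gamma & \delta\end{pmatrix}$ from the three scalar equations $\Tr M_3=\mu$, $\Tr(M_3M_1)=\wt z$, and $\Tr(M_3M_2)=\wt y$. These are linear in the entries of $M_3$; with $M_1,M_2$ in the fixed normal form a direct computation shows the coefficient matrix has a non-vanishing minor whose only potential zero is proportional to $(\wt y-2)(\wt z-2)$, nonzero by hypothesis. This determines $(\alpha,\delta)$ and one of $(\beta,\gamma)$ as linear expressions in $(\wt x,\wt y,\wt z)$, with the remaining entry pinned down by $\det M_3=1$. The fourth matrix $M_4:=(M_3M_2M_1)^{-1}$ then automatically satisfies $\Tr M_4=\mu$, and the algebraic identity that simultaneously enforces $\det M_3=1$ and $\Tr M_4=\mu$ is precisely \eqref{Fricke4} --- this is the structural reason the Fricke cubic parametrises the character variety.

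For uniqueness, any representation $\rho$ with the given trace coordinates can, after conjugation, be brought into the normal form above, and the same trace equations then yield the same matrices, so $\rho$ is conjugate to the one constructed. The discrete ambiguities (ordering of eigenvalues of $M_1$ and the sign in $r=\pm 1$) are absorbed by conjugation by the Weyl element $\begin{pmatrix}0 & 1\\ -1 & 0\end{pmatrix}$ and by $\mathrm{diag}(i,-i)$. The main obstacle is the polynomial identity matching \eqref{Fricke4} with the consistency conditions on $\det M_3$ and $\Tr M_4$; this is a concrete but tedious computation, which in practice one cross-checks against the standard presentation of the $\mathrm{SL}(2,\C)$-character variety of $S_4$ as the cubic surface recorded in \cite{Gold88}.
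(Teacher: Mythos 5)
The paper itself does not prove this lemma; it is quoted as classical with a pointer to Fricke--Klein, \cite{Gold88} and \cite{BeG}, so your normal-form construction is the natural route and is indeed what those references carry out. However, as written your sketch has two genuine gaps.

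First, the normalisation of $M_2$. With $M_1=\mathrm{diag}(a,a^{-1})$ one computes $qr=ps-1=-(\wt{x}-2)(\wt{x}-\mu^2+2)/(\mu^2-4)$, so $qr=0$ is equivalent to $\wt{x}\in\{2,\mu^2-2\}$, and the hypothesis only excludes $\wt{x}=2$. The value $\wt{x}=\mu^2-2$ does occur at points of the cubic with all three coordinates different from $2$: there $M_1$ and $M_2$ share an eigenvector while the full representation is still irreducible (a reducible representation with these local conjugacy classes and $\wt{r}\in(\tfrac14,\tfrac12)$ forces the eigenvalue exponents to sum to zero and hence at least \emph{two} of $\wt{x},\wt{y},\wt{z}$ to equal $2$). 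No a posteriori check via $\wt{y},\wt{z}$ removes this case; the triangular normal form of $M_2$ must be handled separately. Relatedly, the minor whose nonvanishing you need is governed by $\wt{x}$ (it is proportional to $qr$), not by $(\wt{y}-2)(\wt{z}-2)$.

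Second, the determination of $M_3$. Your three linear equations fix $(\alpha,\delta)$ and only the single combination $r\beta+q\gamma$; imposing $\det M_3=1$ then yields a quadratic with generically two roots, and these two roots are distinguished precisely by the value of $\Tr(M_3M_2M_1)=\Tr(M_4^{-1})=\Tr M_4$. So $\Tr M_4=\mu$ is \emph{not} automatic: it is the additional condition that selects (at most) one of the two roots, and its compatibility with $\det M_3=1$ is exactly where \eqref{Fricke4} enters. As written your construction produces neither a unique $M_3$ nor a guarantee that $M_4$ lies in the prescribed conjugacy class, so both existence and uniqueness are incomplete. The clean repair is to use all four linear trace conditions $\Tr M_3=\mu$, $\Tr(M_3M_1)=\wt{z}$, $\Tr(M_3M_2)=\wt{y}$, $\Tr(M_3M_2M_1)=\mu$ to solve for the four entries of $M_3$ --- the coefficient determinant is $\pm qr(a-a^{-1})^2$, again pointing back to the first gap --- and then to check that $\det M_3=1$ is equivalent to \eqref{Fricke4}. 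Alternatively, uniqueness can be outsourced entirely: the hypothesis forces irreducibility as noted above, and irreducible $\SL(2,\C)$-representations are determined up to conjugation by their characters, which avoids the normal-form bookkeeping altogether.
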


Moreover, a totally reducible representation is conjugate to a $\mathrm{SU}( 
2)$-representation if and only if $\wt x,\,\wt y,\,\wt z\,\in\,[-2,\,2]$, while it is 
conjugate to an $\SL(2,\R)$-representation if $\wt{x},\,\wt{y},\,\wt{z}\,\in\,\R$ are real and
at least one of them lying in $\R \setminus [-2,\,2]$.

\begin{remark}\label{character4sym}
For the parabolic weight $\wt{r}\,\in\,(0,\, \frac{1}{4})$, there is a natural biholomorphic map between the
character varieties for $\wt{r}$ and $\tfrac{1}{2}-\wt{r}$. In fact, this
biholomorphism is induced by  $M_k\,\longmapsto\, -M_k$,
which gives the identity map in terms of the respective $(\wt x,\,\wt y,\,\wt z)$-trace coordinates.
Note that 
\[2\text{cos}(2\pi \wt{r})\,=\,-2\text{cos}(2\pi (\tfrac{1}{2}-\wt{r})),\]
and therefore also equation \eqref{Fricke4} does not change. 
\end{remark}

\subsection{Flat $\rm{SL}(2, \C)$--connections on the 1-punctured torus}\label{sect:SL2onT}

For $\tau\,\in\, \R_{>0}$ let \begin{equation}\label{et}
T_{\tau}^2\,:= \,\C / \Gamma, \quad \text{with} \quad \Gamma\,=\, {\mathbb Z}+\tau i{\mathbb Z}\,\subset\,
\mathbb C
\end{equation}
 be a rectangular torus. Moreover, let $o\,=\,[0]\,\in\, T_{\tau}^2$ and  
$p_0\, :=\, \frac{1+ \tau i}{4}\,\in\, T_{\tau}^2$ and consider $\pi_1(T_{\tau}^2\setminus\{o\},\, p_0)$  the fundamental group of
the one-punctured torus $T_{\tau}^2\setminus \{o\}$ with basepoint $p_0$.
This is a free group with two generators 
$\gamma_x,\,\gamma_y\,\in\,\pi_1(T_{\tau}^2\setminus\{o\}, \, p_0),$ 
where
\begin{equation}\label{gamma_x1}
\gamma_x\,\colon\, [0,\, 1]\,\longrightarrow\, T_{\tau}^2\setminus\{o\};\ \, s\,\longmapsto\,
s+
\frac{1+\tau i}{4}
\end{equation}
and
\begin{equation}\label{gamma_x}
\gamma_y\,\colon\, [0,\,1]\,\longrightarrow\, T_{\tau}^2\setminus\{o\};\ \, s\,\longmapsto\,
\tau is+\frac{1+ \tau i}{4}.
\end{equation}
The commutator $\gamma_y^{-1}\gamma_x^{-1}\gamma_y\gamma_x \,\in\,
\pi_1( T_{\tau}^2\setminus\{o\}, \, p_0)$ corresponds to a simple loop 
going around the marked point $o$ anti-clockwise.

For $r \in(0,\,\tfrac{1}{2})$ let $\mathcal M_{1,1}^{r}$ be the moduli space of
flat $\mathrm{SL}(2,\C)$-connections on the 1-punctured torus $T^2_\tau \setminus \{o\}$
with local monodromy around the marked point $o$ lying in the conjugacy class of the matrix 
\begin{equation}\label{locmon}
\dvector{ e^{-2\pi i r} &0 \\ 0& e^{2\pi i r}}\,\in\,
{\rm SL}(2,{\mathbb C})\, .
\end{equation}
As for the 4-punctured sphere, the conjugacy class is determined by the value of its trace $2\cos(2 
\pi r)$, see \cite{Gold}. For an element in $\mathcal M_{1,1}^{r}$ let $X,\,Y$ be the  monodromies along the curves
$$\gamma_x, \,\gamma_y \,\in\, \pi_1( T^2_\tau \setminus \{0 \}, \, p_0)$$
(defined in \eqref{gamma_x1} and \eqref{gamma_x}), and 
let
\[x\,=\,\Tr(X),\quad y\,=\,\Tr(Y),\quad z\,=\,\Tr(YX)\]
be the corresponding trace coordinates satisfying  the equation
\begin{equation}\label{character-equation}
x^2+y^2+z^2-xyz-2-2\cos(2\pi r)\,=\,0\,.
\end{equation}
The corresponding affine variety is also called  the (relative) character variety of the 1-punctured torus.
By a result of Fricke, the moduli space $\mathcal M_{1,1}^{r}$ is diffeomorphic
to the character  variety defined in (\ref{character-equation}) by  associating to a  monodromy representation the traces
$x\,=\,\Tr(X),\,y\,=\, \Tr(Y)$ and $z\,=\,\Tr(YX)$  (see, \cite[Section 2.1]{Gold}).

\begin{remark}\label{2.2}
For given $x$ and $y$, the equation \eqref{character-equation} is quadratic in $z$, and 
hence there are two (possibly equal) solutions of \eqref{character-equation} in $z$ which 
we refer to as $z_1$ and $z_2$. If $r$, $x$ and $y$ are all real, then $z_1$ and $z_2$ are 
complex conjugate to each other.
\end{remark}
 
\begin{theorem}[\cite{Gold}]\label{goldman}
For $r\,\in\,(0,\,\tfrac{1}{2})$ fixed, the space of all real points of  the character
variety defined by \eqref{character-equation} has
5 connected components: one compact component characterized by
the condition $x,\,y,\,z\,\in\,[-2,\,2]$ and  four non-compact 
components (which are all diffeomorphic to each other). The compact component consists of $\mathrm{SU}(2)$-representations,
while the non-compact components consist of $\mathrm{SL}(2,\R)$-representations.
\end{theorem}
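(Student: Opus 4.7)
The plan is to analyze $V_c = \{(x,y,z) \in \R^3 : x^2+y^2+z^2-xyz = c\}$ with $c := 2+2\cos(2\pi r) \in (0,4)$ by slicing with the family of hyperplanes $x = \mathrm{const}$, and then separately identifying which components carry $\SU(2)$- versus $\SL(2,\R)$-representations.

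First, I would verify that $V_c$ is a smooth real-analytic surface by checking that the critical points of $f(x,y,z) = x^2+y^2+z^2-xyz$ occur only at the levels $f = 0$ and $f = 4$. The gradient equations $2x=yz,\, 2y=xz,\, 2z=xy$ multiply out to $(xyz)^2 = 8xyz$, yielding either the origin (with $f = 0$) or the four points $(\pm2,\pm2,\pm2)$ with $xyz = 8$ (where $f = 4$). Since $c \in (0,4)$, no critical point of $f$ lies on $V_c$.

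Next I would carry out the slicing. After the linear change $u = y+z,\, v = y-z$, the slice $V_c \cap \{x = \mathrm{const}\}$ becomes $(2-x)u^2 + (2+x)v^2 = 4(c-x^2)$. A direct case analysis yields: an ellipse (degenerating to a point at the endpoints) for $|x|\le \sqrt{c}$; an empty slice for $\sqrt{c} < |x| \le 2$; and a two-branch hyperbola for $|x| > 2$, with the two branches separated by $u=0$ when $x>2$ and by $v=0$ when $x<-2$. Assembly over $|x|\le\sqrt c$ gives a topological $2$-sphere contained in $[-2,2]^3$ (the compact component), while the assembly over each of $x > 2$ and $x<-2$ produces two non-compact components. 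The empty annulus $\sqrt{c} < |x| \le 2$ separates everything, so $V_c$ has exactly five components, with the compact one characterised by $x,y,z \in [-2,2]$. Mutual diffeomorphism of the four non-compact components then follows from the linear involutions $(x,y,z) \mapsto (-x,-y,z),\, (-x,y,-z),\, (x,-y,-z)$, each of which preserves $V_c$ and which together permute the four non-compact components transitively.

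Finally I would identify the compact component with $\SU(2)$-representations and the non-compact ones with $\SL(2,\R)$-representations. On the compact component, $x,y,z \in [-2,2]$ lets one write $x = 2\cos\alpha$ etc., and a direct construction in $\SU(2)$ produces a pair $(X,Y)$ with the prescribed traces and the correct commutator trace $2\cos(2\pi r)$; by Fricke uniqueness (the analogue of Lemma \ref{lemma_uniqueness} for the 1-punctured torus), this determines the conjugacy class. On a non-compact component at least one of $x,y,z$ has absolute value $>2$, so the corresponding matrix is $\R$-diagonalisable; after conjugating it into $\SL(2,\R)$ and solving the remaining real trace equations, one builds a representative in $\SL(2,\R)$, and uniqueness again pins down the conjugacy class. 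The hardest step, in my view, is precisely this last representation-theoretic identification: real trace coordinates a priori permit conjugacy only into a twisted real form such as $\SU(1,1)$, and one must explicitly produce an $\SL(2,\R)$ representative before invoking uniqueness. The geometric slicing, by contrast, is routine once smoothness is in hand.
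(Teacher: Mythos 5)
The paper does not prove this statement at all: it is quoted verbatim as a known result of Goldman and attributed to \cite{Gold}, so there is no in-paper argument to compare against. Your proposal is, in substance, a correct self-contained proof, and it follows the same general lines as Goldman's original one (level-set analysis of $\kappa(x,y,z)=x^2+y^2+z^2-xyz-2$ for $\kappa=2\cos(2\pi r)\in(-2,2)$, followed by the unitary/real dichotomy). The topological half checks out: the critical values of $f=x^2+y^2+z^2-xyz$ are indeed $0$ and $4$, the slice computation $(2-x)u^2+(2+x)v^2=4(c-x^2)$ is right, the empty band $\sqrt{c}<|x|\le 2$ (and its analogues in $y$ and $z$) cleanly separates the sphere from the four hyperbola-branch sheets, and the Klein four-group of sign changes acts transitively on those sheets. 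For the representation-theoretic half your outline is correct but leans on two facts you should make explicit. First, irreducibility: the Fricke identity $\Tr(XYX^{-1}Y^{-1})=\kappa(x,y,z)+2=2\cos(2\pi r)+2\neq 2$ rules out a common invariant line, so every representation with character on this variety is irreducible and the Fricke uniqueness you invoke applies; the same inequality is what guarantees, in your $\SL(2,\R)$ construction with $X=\mathrm{diag}(\lambda,\lambda^{-1})$, that the off-diagonal product $bc=ad-1$ is nonzero, so real $b,c$ can be chosen and the representative is irreducible. Second, for the compact component the existence of an $\SU(2)$ pair with prescribed $(x,y,z)$ requires not just $x,y,z\in[-2,2]$ but also $\kappa\le 2$; this holds here since $\kappa=2\cos(2\pi r)<2$, but it is the reason the construction closes up and should be stated. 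Your worry about $\SU(1,1)$ versus $\SL(2,\R)$ is moot since these are conjugate inside $\SL(2,\C)$, and your explicit construction sidesteps the invariant-Hermitian-form argument Goldman uses. With those two remarks added, the proof is complete.
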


\begin{remark}
The non-compact components are interchanged by tensoring with a flat 
$\Z_2$-bundle. These are called sign-change automorphisms by Goldman
\cite{Gold}.
\end{remark}

\subsection*{A map between the character varieties}

By  \cite[Theorem 4.9]{BDHH} (see also \cite{HeHe}), there exists for every $r \,\in
\,(0,\,\tfrac{1}{2})$ a degree 4  birational map between the moduli space
$\mathcal M^{r}_{1,1}$ of flat $\mathrm{SL}(2,\C)$-connections on the one-punctured
torus $T^2_\tau$ (as defined in \eqref{et}) and the moduli space $\mathcal M_{0,4}^{\wt{r}}$
of flat $\mathrm{SL}(2,\C)$-connections on the four-punctured sphere $S_4$ (as defined
in \eqref{S4}) with \[\wt{r} \,=\, \tfrac{1+2r }{4}\,\in\, (\tfrac{1}{4},\,\tfrac{1}{2}).\]
On the level of character 
varieties  this  map $ \mathcal M^{r}_{1,1} \longrightarrow \mathcal M_{0,4}^{\wt{r}}  $   is given by 
\begin{equation}\label{abelintraces}(x,\,y,\,z)\,\longmapsto\, (\wt x,\,\wt y,\,\wt z)\,=\,(2-x^2,\,2-y^2,\,2-z^2).\end{equation}

The construction of the above map in \cite[Theorem 4.9]{BDHH}
uses the rectangular torus $T^2_\tau$, with $\tau\in \R_{>0},$  being a double cover of
$\CP^1$ branched over the four points $p_1,\, \cdots ,\, p_4$ (defined in \eqref{rp}), i.e.,  $T^2_\tau$ is given by
 \[y^2\,=\,\frac{(z-p_1)(z-p_2)}{(z-p_3)(z-p_4)}.\]
 
 Define \begin{equation}\label{4ps}
{\mathcal S}_4 (\tau) \, :=\, (\CP^1,\, \{p_1,\,\cdots ,\, p_4\})\, ,
\end{equation}
with $p _j$ as in \eqref{rp}, chosen to define a rectangular torus $T^2_\tau$, with $\tau
\,\in \,\R_{>0}$. Since $T_{\tau}^2$ is rectangular, the reflection along one edge 
\begin{equation}\label{eet}
\eta\,\colon\, T_{\tau}^2\,\longrightarrow\, T_{\tau}^2,\quad [w]\,\longmapsto\, [\overline{w}],
\end{equation}
where $w$ is the global coordinate on $\C,$ defines a real involution on $T_{\tau}^2$.

\begin{remark}
Since the real involution $\eta$ considered here is different than in \cite{BDHH},
we also use slightly different coverings of the 4-punctured sphere, see Lemma \ref{trivialoncovering}. 
Nevertheless, the main results in \cite{BDHH} to obtain Fuchsian representations on the holomorphically trivial bundle remain true by analogous arguments.
\end{remark} 

\subsection{Abelianization}\label{sect:abelianization}
Every element in $\mathcal 
M_{1,1}^r$ can be represented (meaning it lies in the same smooth  gauge class)
by a logarithmic flat connection with a simple pole at $o$.
Abelianization yields particularly well-behaved coordinates $a,\,\chi \,\in\, \C$
on ${\mathcal M}_{1,1}^r$ as follows, see also
\cite[Section 4]{BDH}, or \cite{HeHe}. For  $L$ being the $C^\infty$-trivial bundle $T_{\tau}^2\times {\mathbb C}\, \longrightarrow\, 
T_{\tau}^2$ the generic logarithmic connection in $\mathcal 
M_{1,1}^r$ is given by 
\begin{equation}\label{abel-connection}
\nabla\,=\,\nabla^{a,\chi,r}\,=\,\dvector{\nabla^L &\gamma^-_\chi\\ \gamma^+_\chi &
\nabla^{L^*} }\,,
\end{equation}
where 
\begin{equation}\label{nablaL}
\nabla^L\,=\,d+adw+\chi d\overline{w}
\end{equation}
 is the flat connection on $L$ for constant $a,\,\chi \in \C$ and $w$ being the global holomorphic coordinate on 
$T_{\tau}^2$ (see \cite[Section 4]{BDH}). Moreover, $\nabla^{L^*}$ is the dual connection 
of $\nabla^L$, and the induced holomorphic structure  (by \eqref{abel-connection})
on $L$ is given by the Dolbeault operator 
$\overline{\partial}^0 + \chi d\overline{w}$, where $\overline{\partial}^0\,=\,d''$ is the 
$(0,1)$-part of the de Rham differential operator $d$.
In this generic case,  characterized by $\chi$  not being a half-lattice point of Jac$(T^2_\tau)$, i.e., $L^2\neq \mathcal O_{T_{\tau}^2},$
 $\gamma^+_\chi$ and $\gamma^-_\chi$ are meromorphic sections with 
respect to the holomorphic structures given by the Dolbeault operators 
$$\overline{\partial}^0 - 2 \chi d\overline{w}\quad  \text{ and } \quad \overline{\partial}^0 + 2 \chi 
d\overline{w},$$ respectively, with simple poles at $o\,\in\, T^2_\tau$ and  residues determined 
by the eigenvalue of the local monodromy $r\in (0, \frac{1}{2})$. 

In the non-generic case, the underlying rank two holomorphic bundle is a non-trivial extension of a spin bundle $S$ by itself.
With respect to the $C^\infty$-splitting $S\oplus S^*$ 
the Dolbeaut operator 
 is given by
\begin{equation}\label{abel-connection-spindbar}
\overline\partial^\nabla\,=\,\dvector{\overline\partial^0+\chi d\overline{w} &d\overline{w}\\ 0&
\overline\partial^0-\chi d\overline w }
\end{equation}
for a half lattice point $\chi \in$ Jac$(T^2_\tau)$, while the $\partial$-part 
\begin{equation}\label{abel-connection-spinpartial}
\partial^\nabla\,=\,\dvector{\partial^S & b dw\\ c dw&
(\partial^S)^* }\,
\end{equation}
is singular at $o$, i.e., $\partial^S $ and its dual $(\partial^S )^*$ are line bundle connections singular at $o$ and $b$ is a function singular at $o,$ 
but $c\in\C^*$ is a non-zero constant.
Thus, in the non-generic case, the connection  takes the form of an (orbifold) oper, compare with \eqref{eq:oper} below.

\begin{remark}
For given $\nabla$ the holormorphic structure on $L$, denoted by  $\chi(\nabla)\in$ Jac$(T^2_\tau)$ by abuse of notation, is only well defined up to taking the dual. \end{remark}

\begin{remark}
The parabolic weight at the puncture $o$ induced by the logarithmic connection $\nabla$   is given by $r.$ 
For $\nabla=\nabla^{a,\chi,r}$, where $\chi$ is not a half-lattice point,
the parabolic 
line $l_o$ is uniquely determined, up to a holomorphic automorphism of $L\oplus L^*$, by the condition that it is neither the line 
$L_o$, nor the line $L^*_o$ (i.e., the off-diagonal is non-zero). 
In the non-generic case, the parabolic line is either given by $l_o=S_0$ and the parabolic bundle is unstable, or the parabolic line is not contained in the unique holomorphic line subbundle of degree 0, and the parabolic bundle is stable.
\end{remark}

Note that $o$ is contained in the fix point set of the reflexion  $\eta$ in \eqref{eet}.
Since $r\in(0,\, \frac{1}{2})$ is  real, $\eta$ 
 induces a real involution of the corresponding de Rham moduli space
\[\widehat{\eta}\,\colon\, \mathcal M_{1,1}^{r}\,\longrightarrow\,
\mathcal M_{1,1}^{r},\ \quad [\nabla]\,\longmapsto\, [\eta^*\overline\nabla]\]
 and we have

\begin{lemma}\label{tausymcon}
On the rectangular torus $T_{\tau}^2$ the gauge class of a connection 
$\nabla\,=\,\nabla^{a,\chi, r}$, with $\chi\notin\tfrac{1}{2}\Gamma$, is
fixed by the involution $\eta$ if and only if one of the following four conditions holds:
\begin{equation}\label{realLL}
\begin{split}
&\chi\in \R,\;\text{ and }\,\; a\,\in\, \R, \\
\text{or }\quad &\chi+k \frac{\pi i}{2}\,\in\, \R\,\;\text{ and }\; a-k \frac{\pi
i}{2}\,\in\, \R \;\;\text{for some }\;k\,\in\,\Z ,\\
\text{or }\quad&\chi\,\in\,  i\R\,\; \text{ and }\; a \,\in\, i\R, \\
\text{ or }\quad  &\chi+k\frac{\pi  }{2\tau}\,\in\, i\R\,\;\text{ and }\; a- k
\frac{\pi }{2\tau}\,\in\, i\R\;\;\text{for some }\;k\,\in\,\Z.
\end{split}\end{equation}
\end{lemma}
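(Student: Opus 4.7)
The approach is to compute the effect of the involution $\eta^*\overline{\cdot}$ on the abelian normal form \eqref{abel-connection} and then solve the resulting fixed-point equation modulo the residual gauge freedom preserving this ansatz.

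First, I would compute the action of $\widehat\eta$ on the abelianization coordinates. Because $\eta^*dw=d\overline w$ and complex conjugation of a $(1,0)$-form yields a $(0,1)$-form with conjugated coefficient (and vice versa), the composition $\eta^*\overline{\cdot}$ sends the diagonal 1-form $a\,dw+\chi\,d\overline w$ back to $\overline a\,dw+\overline\chi\,d\overline w$. Hence $\eta^*\overline{\nabla^L}=\nabla^{L_{\overline a,\overline\chi}}$, and an analogous computation for the off-diagonal sections $\gamma^\pm_\chi$ (using their uniqueness up to constant rescaling in the generic, non-half-lattice regime) gives $\eta^*\overline{\nabla^{a,\chi,r}}\sim\nabla^{\overline a,\overline\chi,r}$.

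Second, I would identify the residual gauge group preserving the abelian ansatz on $T^2_\tau$. It is generated by: the constant Weyl element $\bigl(\begin{smallmatrix}0&1\\-1&0\end{smallmatrix}\bigr)$, acting as $(a,\chi)\mapsto(-a,-\chi)$; diagonal exponential gauges $g=\operatorname{diag}(e^{\alpha w+\beta\overline w},e^{-\alpha w-\beta\overline w})$, which descend to $T^2_\tau$ precisely when $\alpha+\beta\in 2\pi i\Z$ and $(\alpha-\beta)\tau i\in 2\pi i\Z$, producing the shift lattice $\Lambda=\Z\langle(\pi i,\pi i),(\pi/\tau,-\pi/\tau)\rangle$ acting on $(a,\chi)$; and constant rescalings of the off-diagonal terms, trivial on $(a,\chi)$. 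Thus the abelianization datum is well defined modulo $\Lambda\rtimes W$, with $W=\Z/2$ the Weyl group.

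Third, the fixed-point equation reduces to $(\overline a,\overline\chi)\equiv\varepsilon(a,\chi)\pmod\Lambda$ for some $\varepsilon\in\{\pm1\}$. Writing a lattice element as $n(\pi i,\pi i)+m(\pi/\tau,-\pi/\tau)$: when $\varepsilon=+1$, the left side $(\overline a-a,\overline\chi-\chi)$ is purely imaginary, forcing $m=0$, and the equation splits by the parity of $n$ into condition (1) for $n$ even and condition (2) for $n$ odd (with $k=n$, after reconciling signs via Weyl); when $\varepsilon=-1$, the left side $(\overline a+a,\overline\chi+\chi)$ is purely real, forcing $n=0$, and the parity of $m$ produces conditions (3) and (4). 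For the converse, one exhibits in each case the explicit gauge --- a diagonal exponential gauge, possibly composed with the Weyl swap --- that conjugates $\eta^*\overline{\nabla^{a,\chi,r}}$ back to $\nabla^{a,\chi,r}$.

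The main technical obstacle is the bookkeeping in the third step: correctly coordinating the parity of $k$ with the Weyl sign ambiguity, and accounting for the opposite signs on the shifts of $a$ and $\chi$ appearing in conditions (2) and (4). This requires careful tracking of how each generator of $\Lambda$ and the Weyl involution act on the pair $(a,\chi)$, together with the compatibility argument for the off-diagonal meromorphic data $\gamma^\pm_\chi$ that legitimates the reduction of the fixed-point problem to the diagonal computation of step one.
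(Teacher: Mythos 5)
Your proposal follows essentially the same route as the paper's proof: compute $\eta^*\overline{\nabla^L}=d+\overline a\,dw+\overline\chi\,d\overline w$, use the uniqueness of $\gamma^\pm_\chi$ up to scale (together with the fixed quadratic residue $r^2$ at $o$) to reduce the gauge-equivalence question for $\nabla$ to that for the line-bundle connection $\nabla^L$ up to dualization, and then solve the resulting congruence modulo the residual gauge freedom. The paper's own proof is much terser --- it checks case (1) explicitly, declares the other three cases analogous, and cites \cite{HeHe,BDHH} for the reduction to the line bundle --- so your explicit description of the residual group as $\Lambda\rtimes W$ with $\Lambda=\Z(\pi i,\pi i)+\Z(\pi/\tau,-\pi/\tau)$ is a sharpening of the same argument rather than a different one.

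One concrete warning about the bookkeeping you defer to your third step. In the $\varepsilon=+1$ branch the purely-imaginary constraint leaves only the generator $(\pi i,\pi i)$, whose two components are \emph{equal}; the condition you will actually derive is $\chi+k\tfrac{\pi i}{2}\in\R$ together with $a+k\tfrac{\pi i}{2}\in\R$, with the \emph{same} sign of $k$ in both, and the Weyl element cannot convert this into the opposite-sign coupling printed in the second line of \eqref{realLL}, since it negates $a$ and $\chi$ simultaneously. The opposite-sign coupling genuinely arises only in the $\varepsilon=-1$ branch, where the surviving generator is $(\pi/\tau,-\pi/\tau)$, i.e.\ in condition (4). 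So your plan of ``accounting for the opposite signs in conditions (2) and (4)'' will go through for (4) but not for (2) as printed: carried out honestly, your computation yields $a+k\tfrac{\pi i}{2}\in\R$ where the lemma states $a-k\tfrac{\pi i}{2}\in\R$. This is not a defect of your approach --- it is exactly the computation the paper leaves implicit, and it points to a sign slip in the statement rather than to a gap in your argument --- but you should be prepared for the discrepancy rather than expecting the Weyl ambiguity to absorb it.
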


\begin{proof}
We have
\[\eta^* dw\,=\, d\overline{w}\quad \text{and}\quad \eta^* d\overline{w}\,=\,d w.\]
Hence, for $\chi\,\in\, \R$ and $a\,\in\, \R$, the connection $\nabla^L$ in \eqref{nablaL}
satisfies the condition
\[\eta^*\overline{\nabla^L}\,=\, \nabla^L .\]
By \cite{HeHe,BDH} the sections
$\gamma^\pm_\chi$ in \eqref{abel-connection} are unique up to scaling.
Moreover, the quadratic residue of
\[\gamma^+_\chi\gamma^-_\chi (dw)^2\]
is $r^2$, and hence this residue determines the conjugacy class of the monodromy
of $\nabla$ around the singular point $o\,\in\, T_{\tau}^2$.
Thus, we obtain constants $c^+,\,c^-$ with
\[\eta^*\overline{\gamma^\pm_\chi dw}\,=\,c^\pm\gamma^\pm_\chi\]
and consequently the two connections $\nabla$ and $\eta^*\overline\nabla$ are gauge
equivalent. The argument for the other $3$ cases in \eqref{realLL} works analogously.

Conversely, if the pull-back $\eta^*\overline\nabla$ is gauge equivalent to $\nabla$ then (by
\cite{HeHe,BDHH}) $\eta^*\overline{\nabla}^L$ is gauge equivalent to either $\nabla^L$ or its
dual. This yields that $a$ and $\chi$ must satisfy one of the conditions in \eqref{realLL}
\end{proof}

For latter purposes, we denote the space of $\eta$-invariant representations by
\[\mathcal M_{1,1}^{r,\eta}\,:=\,\{[\nabla]\,\in\, \mathcal M_{1,1}^{r}
\,\,\mid\,\,\, [\eta^*\overline\nabla]\,=\,[\nabla]\}.\]

\subsection{The hidden symmetries of RSR-representations}\label{sec_symmetries}

\begin{proposition}\label{prop1}
For $\wt{r}
\,\in\, (\tfrac{1}{4},\, \tfrac{1}{2})$ consider  $\wt\rho \in \mathcal{M}_{0,4}^{\wt{r}}$ and let $(\wt{x},\, \wt{y},\,\wt{z})$ be  the corresponding trace coordinates satisfying \eqref{Fricke4} and $(\wt{x}-2)(\wt{y}-2)(\wt{z}-2)
\,\neq\, 0$ (as in Lemma \ref{lemma_uniqueness}). Then there exist an  
element $(x,\,y,\,z)$ in $\mathcal{M}_{1,1}^{r}$,  unique up to signs, satisfying \eqref{character-equation}
with $r\,=\,2 \wt{r} -\frac{1}{2}$ such that
\begin{equation}\label{eqn32}
\wt x\,=\,2-x^2,\quad \wt y\,=\,2-y^2,\quad \wt z\,=\,2-z^2.
\end{equation}
\end{proposition}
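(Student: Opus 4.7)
The statement is essentially an algebraic identity between the two defining equations \eqref{Fricke4} and \eqref{character-equation} of the respective character varieties, so the plan is purely computational: start from the ansatz \eqref{eqn32}, reduce the 1-punctured torus relation to something implied by \eqref{Fricke4}, and then handle the sign ambiguity.

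The first step is to convert the local monodromy parameters. With $r=2\wt r-\tfrac12$ and $\mu=2\cos(2\pi\wt r)=\Tr(M_j)$, the double-angle formula gives
\[
2\cos(2\pi r)\,=\,2\cos(4\pi\wt r-\pi)\,=\,-2\cos(4\pi\wt r)\,=\,2-\mu^{2}.
\]
Consequently, the 1-punctured torus equation \eqref{character-equation} rewrites as
\[
x^{2}+y^{2}+z^{2}-xyz-4+\mu^{2}\,=\,0.
\]
Substituting $x^{2}=2-\wt x$, $y^{2}=2-\wt y$, $z^{2}=2-\wt z$ into this transforms it into
\[
xyz\,=\,2+\mu^{2}-(\wt x+\wt y+\wt z).
\]
So the remaining content is to verify that the right hand side, squared, equals $(2-\wt x)(2-\wt y)(2-\wt z)$, and that the sign of $xyz$ can actually be realized by a choice of square roots.

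The second step is the algebraic identity. Writing $S=\wt x+\wt y+\wt z$, $P=\wt x\wt y+\wt y\wt z+\wt z\wt x$, $Q=\wt x\wt y\wt z$, the Fricke relation \eqref{Fricke4} is
\[
S^{2}-2P+Q-2\mu^{2}S+4\mu^{2}-4+\mu^{4}\,=\,0,
\]
so $Q=2P-S^{2}+2\mu^{2}S-4\mu^{2}+4-\mu^{4}$. Expanding the product gives
\[
(2-\wt x)(2-\wt y)(2-\wt z)\,=\,8-4S+2P-Q\,=\,(2-S)^{2}+2\mu^{2}(2-S)+\mu^{4}\,=\,\bigl(2+\mu^{2}-S\bigr)^{2},
\]
exactly as required. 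Thus, choosing any square roots $x=\pm\sqrt{2-\wt x}$, $y=\pm\sqrt{2-\wt y}$ (which are nonzero by the hypothesis $(\wt x-2)(\wt y-2)(\wt z-2)\neq 0$) and defining $z:=(2+\mu^{2}-S)/(xy)$, one has $z^{2}=2-\wt z$ and the triple $(x,y,z)$ satisfies \eqref{character-equation}. The corresponding point of $\mathcal M_{1,1}^{r}$ then exists by Fricke's theorem stated in Section~\ref{sect:SL2onT}.

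Finally I would discuss uniqueness. The squares $x^{2},y^{2},z^{2}$ are determined by $(\wt x,\wt y,\wt z)$ via \eqref{eqn32}, hence $(x,y,z)$ is determined up to the sign group $\{(\varepsilon_{1},\varepsilon_{2},\varepsilon_{3})\in\{\pm1\}^{3}\}$; the constraint $xyz=2+\mu^{2}-S$ cuts this down to the subgroup $\{\varepsilon_{1}\varepsilon_{2}\varepsilon_{3}=1\}\cong\Z_{2}\times\Z_{2}$, matching exactly the four non-compact real components of Theorem~\ref{goldman} permuted by the sign-change automorphisms. I expect no genuine obstacle: the only mildly delicate point is recognizing that the identity between the two character varieties is encoded in a perfect square, and that the sign of $xyz$ is never over-determined because the hypothesis $(\wt x-2)(\wt y-2)(\wt z-2)\neq 0$ keeps all three denominators nonzero.
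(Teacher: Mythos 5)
Your proof is correct and is essentially the paper's own argument made explicit: the perfect-square identity $(2-\wt x)(2-\wt y)(2-\wt z)=\bigl(2+\mu^{2}-(\wt x+\wt y+\wt z)\bigr)^{2}$ is precisely the statement that, under the substitution \eqref{eqn32}, the Fricke quartic \eqref{Fricke4} factors as the product of the torus relation \eqref{character-equation} evaluated at $(x,y,z)$ and at $(x,y,-z)$, which is the factorization the paper imports from \cite{BDHH}. Your explicit construction of $z$ as $\bigl(2+\mu^{2}-(\wt x+\wt y+\wt z)\bigr)/(xy)$ and the identification of the residual sign group $\{\varepsilon_1\varepsilon_2\varepsilon_3=1\}$ simply fill in details the paper leaves implicit.
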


\begin{proof}
As in the proof of Theorem 4.9 in
\cite{BDHH}, the Klein-Fricke equation \eqref{Fricke4} factors into the product of the Klein-Fricke
equation \eqref{character-equation} for $(x,\,y,\,z)$ and the Klein-Fricke
equation \eqref{character-equation} for $(x,\,y,\, -z) $ when applying \eqref{eqn32}. Hence,
for given $(\wt x,\, \wt y,\, \wt z)$ either the corresponding $(x,\,y,\,z)$ or $(x,\,y,\, -z)$
solves equation \eqref{character-equation}.
\end{proof}

\begin{remark}
Proposition \ref{prop1} shows the existence of additional symmetries of representations
$\wt\rho \in \mathcal{M}_{0,4}^{\wt{r}}$ on the 4-punctured sphere, see
equation (4.19) in \cite{BDHH} or \cite[$\S~6$]{Gold97}.
\end{remark}

\begin{corollary}
For $\wt{r}\,\in
\,(\tfrac{1}{4}, \,\tfrac{1}{2})$ let $\rho \in \mathcal{M}_{0,4}^{\wt{r}}$ be a RSR-representation. Then $\rho$  corresponds via abelianization to
a real representation in $\mathcal{M}_{1,1}^{r},$ with $r\,=\,2 \wt{r} -\frac{1}{2}$. By abuse of notation we will denote the induced representation in $\mathcal{M}_{1,1}^{r},$ by $\rho$ as well.
\end{corollary}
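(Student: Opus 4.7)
The plan is a short deduction from Proposition \ref{prop1}: I would use it to convert the RSR trace data on the 4-punctured sphere into a point of $\mathcal{M}_{1,1}^r$, and then read off reality from the strict inequalities built into the RSR hypothesis together with Theorem \ref{goldman}.

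First I would note that, by cyclicity of the trace, the RSR quantities $x=\Tr(M_1M_2)$, $y=\Tr(M_2M_3)$, $z=\Tr(M_1M_3)$ coincide with the moduli-space trace coordinates $\wt{x}=\Tr(M_2M_1)$, $\wt{y}=\Tr(M_3M_2)$, $\wt{z}=\Tr(M_3M_1)$ of $\rho\in\mathcal{M}_{0,4}^{\wt{r}}$. By the Real part of the RSR definition all three are real and strictly less than $-2$, so in particular $(\wt{x}-2)(\wt{y}-2)(\wt{z}-2)\neq 0$ and the hypotheses of Proposition \ref{prop1} are met. Applying it produces a point $(x,y,z)\in\mathcal{M}_{1,1}^r$, unique up to signs, with $r=2\wt{r}-\tfrac{1}{2}\in(0,\tfrac{1}{2})$ and satisfying $\wt{x}=2-x^2$, $\wt{y}=2-y^2$, $\wt{z}=2-z^2$.

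Second, the reality of this representation follows immediately. From $\wt{x}<-2$ we get $x^2=2-\wt{x}>4$, so $x$ is real with $|x|>2$; the same argument gives $y$ and $z$ real with $|y|,|z|>2$. Thus $(x,y,z)\in\mathbb{R}^3$ and none of the three coordinates lies in $[-2,2]$, so by Theorem \ref{goldman} the corresponding point lies in one of the four non-compact components of the real locus of the 1-punctured torus character variety, all of which consist of $\mathrm{SL}(2,\mathbb{R})$-representations.

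There is essentially no obstacle once Proposition \ref{prop1} is in hand; the only mild subtlety is the sign ambiguity $(x,y,z)\leftrightarrow(\pm x,\pm y,\pm z)$. By the remark following Theorem \ref{goldman} these sign flips are realized by Goldman's sign-change automorphisms and merely permute the four non-compact $\mathrm{SL}(2,\mathbb{R})$-components, so every choice yields a real representation, which is all the corollary asserts.
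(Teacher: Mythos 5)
Your proposal is correct and follows essentially the same route as the paper: apply Proposition \ref{prop1} to the RSR trace point $(\wt x,\wt y,\wt z)$ with all coordinates $<-2$, observe that $x^2=2-\wt x>4$ (and likewise for $y,z$) forces $(x,y,z)$ to be a real point of the character variety \eqref{character-equation}, and invoke Theorem \ref{goldman} to conclude the corresponding element of $\mathcal M_{1,1}^r$ is an $\mathrm{SL}(2,\R)$-representation. Your extra remarks on the cyclicity of the trace and on the sign ambiguity being absorbed by Goldman's sign-change automorphisms are correct elaborations of details the paper leaves implicit.
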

\begin{proof}

By Definition \ref{RSR-def}, the image of $\rho$ in the character variety \eqref{Fricke4}
is a point $(\wt x,\, \wt y,\, \wt z)$ such that 
$\wt x\,<\,-2$,\, $\wt y \,<\,-2$ and $\wt z\,<\,-2$. Hence the corresponding solution
$(x,\,y,\,z)$ of \eqref{eqn32} is a real  point in the character variety of the the 1-punctured torus defined by \eqref{character-equation}. By \cite{Gold} , see Theorem \ref{goldman}, this solution $(x,\,y,\,z)$ corresponds  to  a real element
in $\mathcal{M}_{1,1}^{r}$, where $r\,=\,2 \wt{r} -\frac{1}{2}$.
 \end{proof}

\subsection{Strictly semi-stable parabolic bundles on the 4-punctured sphere}
On ${\mathcal S}_4$, the Riemann sphere $\CP^1$ with four unordered  marked points,
 fix the parabolic weight $\wt{r}
\,\in\,(\tfrac{1}{4},\, \tfrac{1}{2})$ to be the same at each puncture. Then,
up to isomorphism, there are 
exactly 3 strictly semi-stable parabolic rank 2 bundles with trivial determinant and given parabolic weights. These 3 parabolic bundles are defined on the 
holomorphic rank 2 bundle $\mathcal O\oplus\mathcal O\,\longrightarrow\,\C P^1$, and determined by the choice of signs $\sigma_2,\,\sigma_3,\,\sigma_4\,\in\, \{\pm1\}$ with 
\[1+\sigma_2+\sigma_3+\sigma_4\,=\,0,\]
 induced by the reducible Fuchsian systems
\begin{equation}\label{reducibleFuchs}
D\,=\,d+\begin{pmatrix} \rho&0 \\ 0& -\rho\end{pmatrix}\left( 1 \frac{dz}{z-p_1}+ \sigma_2 \frac{dz}{z-p_2}+ \sigma_3 \frac{dz}{z-p_3}+ \sigma_4 \frac{dz}{z-p_4} \right).\end{equation}
Moreover, they admit strongly parabolic  and offdiagonal Higgs fields $\Phi$ with non-zero
determinant, e.g., for $\sigma_2 = 1, \sigma_3=\sigma_4 =-1$ we have
\begin{equation}\label{specialHiggsfield}
\Phi\,=\,\begin{pmatrix} 0& 1\\0&0\end{pmatrix}\left (\frac{dz}{z-p_1}-\frac{dz}{z-p_2} \right)
+\begin{pmatrix} 0& 0\\1&0\end{pmatrix} \left(\frac{dz}{z-p_3}-\frac{dz}{z-p_4} \right).
\end{equation}

\begin{lemma}\label{Lemma:4-spin}
In the abelianization coordinates, the holomorphic structures, i.e., the $\chi$ 
coordinates, corresponding to the 3 strictly semistable parabolic bundles with 
parabolic weight $\wt{r}$ at every puncture are given by the $4\times$3=12 non-trivial 
4-spin bundles on the torus $T^2_\tau$, i.e., by the  line bundles $L\in
{\rm Jac}(T^2_\tau)$ with $L^{\otimes 4}\,=\, {\mathcal O}_{T^2_\tau}$ and $L^{\otimes 2}\,\neq\, {\mathcal O}_{T^2_\tau}$.
\end{lemma}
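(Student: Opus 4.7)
The plan is to pull the three strictly semistable representations from $\mathcal{M}^{\widetilde{r}}_{0,4}$ back to $\mathcal{M}^{r}_{1,1}$ via the degree-$4$ birational correspondence of Proposition~\ref{prop1}, and then to identify the resulting $\chi$-coordinates with the non-trivial $4$-torsion points of $\text{Jac}(T^2_\tau)$.

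First I would compute the trace coordinates. For the reducible Fuchsian system \eqref{reducibleFuchs} with signs $(\sigma_2, \sigma_3, \sigma_4)$ satisfying $1 + \sigma_2 + \sigma_3 + \sigma_4 = 0$, the diagonal monodromies give $\widetilde{x} = 2\cos(2\pi(\sigma_1+\sigma_2)\widetilde{r})$ and analogous expressions for $\widetilde{y}, \widetilde{z}$. The constraint forces exactly one $\sigma_j$ to equal $+1$, so in each of the three admissible cases two of $\widetilde{x}, \widetilde{y}, \widetilde{z}$ equal $2$ and the remaining one equals $2\cos(4\pi\widetilde{r})$. Applying $(x^2, y^2, z^2) = (2-\widetilde{x}, 2-\widetilde{y}, 2-\widetilde{z})$ from Proposition \ref{prop1} with $r = 2\widetilde{r} - \tfrac{1}{2}$, the torus-side trace coordinates are (up to cyclic relabeling) $(\pm 2\cos(\pi r), 0, 0)$, which automatically satisfies \eqref{character-equation}.

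Next I would translate the two vanishing traces into a constraint on $\chi$. On the torus, $\Tr(Y) = 0$ forces $Y^2 = -I$ and $\Tr(YX) = 0$ forces $(YX)^2 = -I$, so both monodromies have eigenvalues in $\{\pm i\}$. In the abelianization \eqref{abel-connection} the diagonal connection $\nabla^L$ contributes naive monodromy eigenvalues $e^{\pm(a+\chi)}$ along $\gamma_x$ and $e^{\pm(a-\chi)\tau i}$ along $\gamma_y$; together with the residue constraint at $o$, the two conditions $Y^2 = (YX)^2 = -I$ imply $4\chi \in \Gamma^\ast$, i.e.\ $L^{\otimes 4} \cong \mathcal{O}_{T^2_\tau}$. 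The condition $L^{\otimes 2} \not\cong \mathcal{O}_{T^2_\tau}$ follows by observing that $2$-torsion $\chi$'s produce the non-generic case \eqref{abel-connection-spindbar}--\eqref{abel-connection-spinpartial}, where the underlying bundle is a non-trivial extension of a spin bundle by itself, not the direct sum $\mathcal{O}^{\oplus 2}$ arising on the $\mathcal{S}_4$ side. For the count, each of the three sphere-side triples has two torus-side trace preimages (the two signs of the non-zero coordinate), and each trace preimage corresponds to a pair $\{\chi, -\chi\}$ via the $L \leftrightarrow L^\ast$ ambiguity, giving $3 \cdot 2 \cdot 2 = 12$ distinct $\chi$-values, which exhaust the $16 - 4 = 12$ non-trivial $4$-torsion points of $\text{Jac}(T^2_\tau)$.

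The main obstacle is the monodromy computation above: the off-diagonal meromorphic sections $\gamma^{\pm}_\chi$ of $\nabla^{a, \chi, r}$ twist the naive eigenvalues along $\gamma_x, \gamma_y$ by conjugation, so one cannot just read off the $4$-torsion condition from requiring $\pm i$ eigenvalues. A clean alternative is to argue at the level of the strongly parabolic Higgs field \eqref{specialHiggsfield}: on the spectral curve $T^2_\tau$ the eigenvalue $\mu$ of $\Phi$ is a nowhere-vanishing holomorphic $1$-form, the eigen-line-bundle $L \subset \pi^\ast(\mathcal{O}^{\oplus 2})$ is generated by the explicit eigenvectors of $\Phi$, and tracking their divisors at the four branch points and at the two preimages of $\infty$ yields the $4$-torsion conclusion by a direct degree and Abel--Jacobi computation.
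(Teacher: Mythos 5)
Your primary route has a genuine gap, one you partly flag yourself. The lemma is a statement about the holomorphic (parabolic) bundle type encoded by $\chi$, and this cannot be extracted from trace coordinates of a monodromy representation: a strictly semistable parabolic bundle carries a whole affine family of logarithmic connections (the reducible system \eqref{reducibleFuchs} plus any strongly parabolic Higgs field) with very different monodromies, so ``the'' trace coordinates of the bundle are not well defined. Worse, for the specific reducible system $D$ the traces are, up to permutation, $(\wt x,\wt y,\wt z)=(2\cos(4\pi\wt r),\,2,\,2)$, which lies exactly in the degenerate locus $(\wt x-2)(\wt y-2)(\wt z-2)=0$ excluded from Lemma \ref{lemma_uniqueness}; there the representation is not determined by its traces and the birational correspondence \eqref{abelintraces} cannot be invoked. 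Finally, as you concede, the diagonal part $\nabla^L$ of \eqref{abel-connection} does not compute the monodromy of $\nabla$, so the implication ``$\Tr Y=\Tr(YX)=0\Rightarrow 4\chi\in\Gamma^*$'' is not available, and the count $3\cdot 2\cdot 2=12$ built on top of it is numerology rather than a proof.

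Your fallback ``clean alternative'' is, in substance, the paper's actual argument: take the explicit strongly parabolic Higgs field \eqref{specialHiggsfield} on the strictly semistable bundle, pass to its spectral curve, and read off the abelianization line bundle from the divisors of the components of the eigenvector inclusions. Two corrections to the sketch are needed before it becomes a proof. First, the spectral curve is not $T^2_\tau$ but the double cover $\Sigma_2\colon \wt y^{\,2}=\prod_{j}(z-p_j)$ of $\CP^1$, which is isomorphic to $\C/(2\Z+2\tau i\Z)$ and hence a $4$-fold unramified cover of $T^2_\tau$; the paper's divisor computation (inclusion components vanishing on $w_3+w_4$ and $w_1+w_2$, giving after normalization the degree-zero bundle $L(w_1-w_2)$) produces a \emph{non-trivial $2$-torsion} point of ${\rm Jac}(\Sigma_2)$, not directly a $4$-torsion point of ${\rm Jac}(T^2_\tau)$. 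Second, one must add the descent step your sketch omits: under the $4$-fold covering the quarter-lattice points of ${\rm Jac}(T^2_\tau)$ pull back precisely to the half-lattice points of ${\rm Jac}(\Sigma_2)$, while the half-lattice points of ${\rm Jac}(T^2_\tau)$ pull back to the trivial bundle; this is what converts the $2$-torsion answer on $\Sigma_2$ into the claimed ``$L^{\otimes 4}=\mathcal O$, $L^{\otimes 2}\neq\mathcal O$'' condition on $T^2_\tau$ and accounts for the factor $4$ in the count $4\times 3=12$. With the eigenline computation carried out on $\Sigma_2$, this descent step added, and the argument repeated for the other two sign choices of $(\sigma_2,\sigma_3,\sigma_4)$, you recover the paper's proof.
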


\begin{proof}
The case $\sigma_3=1, \sigma_2=\sigma_4 = -1$ is already considered in \cite{HHSch}, and \cite{BDHH}.
We only give the proof for $\sigma_2\,=\,1$ and $\sigma_3\,=\,\sigma_4\,=\,-1$,
as the case $\sigma_4\,=\,1$ and $\sigma_2\,=\,\sigma_3\,=\,-1$ work analogously.  Let $$\pi
\,\colon\, \Sigma_2\,\longrightarrow\, \mathcal S_4$$ be a double covering of the sphere $\C P^1$ branched over the four marked  points
$p_1, ..., p_4$, defined by the equation $$\widetilde{y}^2\,=\,\prod_{i=1}^4 (z-p _j).$$ 

Let $w _j := \pi^{-1}(p _j)$ for $i= 1, ..., 4$ and consider the reducible Fuchsian system $D$ \eqref{reducibleFuchs} on $\mathcal S_4.$ We will show that the line bundle $L$ which determines via abelianization \eqref{abel-connection} the gauge class of 
the connection  $[\pi^*D]$ is given by $L \,=\, L(w_1-w_2)\,\longrightarrow\, \Sigma_2$. That $L$ corresponds to 
a half-lattice point of the Jacobian translates to the condition $L = L^*.$

\begin{remark}
In fact $\Sigma_2\,\cong\,\C/(2\Z+2\tau i \Z)$ is a 4-fold covering of $T^2_\tau$.
 The holomorphic structure on the spin bundle $L(w_1-w_2)\,\longrightarrow\, \Sigma_2$  is given by
\[\overline{\partial}^0-\frac{\pi }{4\tau} d\overline{w} \,\equiv\,\overline{\partial}^0+\frac{\pi }{4\tau} d\overline{w},\]
see e.g. \cite[Section 3]{HeHe}.
In particular, for  the choice of signs $\sigma_2\,=\,1$ and $\sigma_3\,=\,\sigma_4\,=\,-1$
the $\chi$-coordinate of $[\pi^*D] \,\in\, \mathcal M_{1,1}^{r}$  is a real half lattice point of Jac$(\Sigma_2)$. Since $\Sigma_2$ is a 4-fold covering of $T^2_\tau$, the  $1/4$-lattice points of Jac$(T^2_\tau)$ pull back to half lattice points of Jac$(\Sigma_2)$, i.e., to non-trivial spin bundles of $\Sigma_2$.
\end{remark}

\subsubsection*{Proof of Lemma \ref{Lemma:4-spin} continued} The Higgs field $\Phi$ as defined in \eqref{specialHiggsfield} has eigenvalues $\pm c \frac{d z}{\widetilde y}$ for some $c\in \C^*$ and a direct computation shows that its eigenline bundles $E^\pm\to \Sigma_2$  are given by
holomorphic inclusion maps
\[\begin{pmatrix} s_\pm\\ t_\pm\end{pmatrix} \colon E^\pm\to \mathcal O\oplus \mathcal O\]
with divisors
\[(s_+)=(s_-)\,=\,w_3+w_4\quad  \text{and} \quad (t_+)=(t_-)\,=\,w_1+w_2\]
(moreover,  $s_+=-s_-$ and $t_+=t_-$ up to scaling), see  \cite[Theorem 2]{HeHe}.  Therefore, 
$$E^\pm \,=\,  L(-w_1-w_2)\,=\, L(-w_3-w_4).$$
Hence,
the corresponding holomorphic line bundle $L $ of degree 0 obtained after tensoring with  $L(2w_1)$ satisfies $L=L^*$ and
is given by $$L \,=\, L(w_1-w_2)=L(2w_1-w_3-w_4).$$
\end{proof}

The Riemann surface $\Sigma$ considered in this paper is obtained by a covering of the 4-punctured sphere $\mathcal S_4$ with the number of sheets depending on the parabolic weight $\wt r.$  More specifically,  
let $\wt{r}=\frac{l}{k}  \in (\frac{1}{2}, \frac{1}{4})$ with coprime integers $l,k\in\N$.
Fix  $\sigma_2\,=\,1$ and $\sigma_3\,=\,\sigma_4\,=\,-1$.
The Riemann surface $\Sigma= \Sigma_g$ given by a $(g+1)$-fold covering $\pi_g: \Sigma_g \rightarrow \mathcal S_4$ defined by the equation 

\begin{equation}\label{eqRS}\Sigma_g \colon y^{g+1}\,=\,\frac{(z-p_1)(z-p_2)}{(z-p_3)(z-p_4)},\end{equation}
where $g=k-1$ if $k$ is odd and $g=k/2-1$ for $k$ even.
Using this covering the singularities of the connections on $\mathcal S_4$ become apparent on $\Sigma_g.$ In other words, there exist a singular gauge under which the connections become smooth connections on $\Sigma_g$. Likewise 
 \begin{equation}\label{eqnRS2}
 \wt\Sigma_g\colon y^{g+1}=\frac{(z-p_1)(z-p_4)}{(z-p_2)(z-p_3)}
 \end{equation}
 is the compact surface with respect to the sign choice $\sigma_4=1$ and $\sigma_2=\sigma_3=-1$.
 The trivial holomorphic structure on $\Sigma_g$ (and analogously for $\wt \Sigma_g$) can be easily identified according to the following Lemma. 

\begin{lemma}\label{trivialoncovering}
Let $\Sigma_g$ \eqref{eqRS}  and $D$ \eqref{reducibleFuchs} be defined as above (for the same choices of $\sigma_2,\sigma_3,\sigma_4$).
Then there exist a unique $\Z_2$-connection $\nabla^{\Z_2}$  on $\Sigma_g$ with
local  monodromies $(-1)^{k-1}$ around each preimage of the marked points $p_1,\dots,p_4$ such that the flat $\mathrm{SL}(2,\C)$-connection
$\nabla^{\Z_2}\otimes(\pi_g)^*D$  has trivial monodromy on $\Sigma_g,$ i.e. it is gauge equivalent to the trivial (smooth) connection
on the compact Riemann surface $\Sigma_g$. If $k$ is odd, $\nabla^{\Z_2}$ is trivial.
\end{lemma}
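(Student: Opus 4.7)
The plan is to exploit the abelian (diagonal) structure of $D$ together with the fact that the covering $\pi_g$ is tailored to the specific sign pattern of $D$. For the fixed choice $\sigma_2=1$, $\sigma_3=\sigma_4=-1$, the connection $D$ is diagonal with entries $d\pm\rho\,\omega$, where
\[
\omega \,=\, \frac{dz}{z-p_1}+\frac{dz}{z-p_2}-\frac{dz}{z-p_3}-\frac{dz}{z-p_4}
\,=\, d\log\!\left(\frac{(z-p_1)(z-p_2)}{(z-p_3)(z-p_4)}\right).
\]
Since $\Sigma_g$ is defined precisely by $y^{g+1}=\frac{(z-p_1)(z-p_2)}{(z-p_3)(z-p_4)}$, pulling back immediately gives $\pi_g^*\omega=(g+1)\,\frac{dy}{y}$. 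Thus $(\pi_g)^*D$ is, on the trivial rank-two bundle over $\Sigma_g\setminus\{q_1,\dots,q_4\}$ (with $q_j$ the unique preimage of $p_j$), the diagonal connection with entries $d\pm\rho(g+1)\,d\log y$.

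The key observation is that $y$ is a genuine meromorphic function on the compact Riemann surface $\Sigma_g$, with divisor $q_1+q_2-q_3-q_4$, so every period of $d\log y$ lies in $2\pi i\,\Z$. Hence the global monodromy of $(\pi_g)^*D$ along any loop has the form $\mathrm{diag}(\mu,\mu^{-1})$ with $\mu=e^{2\pi i\rho(g+1) n}$ for some $n\in\Z$, while the local monodromy around $q_j$ is $\mathrm{diag}(e^{\mp 2\pi i \rho(g+1)},e^{\pm 2\pi i \rho(g+1)})$. Substituting $\rho=\widetilde r=l/k$ and using Definition \ref{RSR-genus-def}, one finds $\rho(g+1)=l\in\Z$ when $k$ is odd, and $\rho(g+1)=l/2$ with $l$ odd (since $\gcd(l,k)=1$) when $k$ is even.

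I would then treat the two cases in parallel. When $k$ is odd, the entire monodromy of $(\pi_g)^*D$ is trivial and I take $\nabla^{\Z_2}$ to be the trivial $\Z_2$-connection, whose local monodromies at the $q_j$ are $1=(-1)^{k-1}$; the required triviality of $\nabla^{\Z_2}\otimes(\pi_g)^*D$ is immediate. When $k$ is even, the monodromy of $(\pi_g)^*D$ takes values in $\{\pm I\}$ and equals $-I$ around each $q_j$; the scalar character $\mu\colon\pi_1(\Sigma_g\setminus\{q_1,\dots,q_4\})\to\{\pm 1\}$ of the upper-diagonal block then defines a flat $\Z_2$-line bundle which I declare to be $\nabla^{\Z_2}$. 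By construction its local monodromy around each $q_j$ is $-1=(-1)^{k-1}$, and tensoring with it kills the monodromy of $(\pi_g)^*D$.

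For uniqueness, any $\widetilde\nabla^{\Z_2}$ satisfying the conclusion has a $\Z_2$-character $\epsilon\in\mathrm{Hom}(\pi_1,\Z_2)$ forced by $\epsilon\mu=\epsilon\mu^{-1}=1$ on every loop; since $\mu\in\{\pm 1\}$ this uniquely determines $\epsilon=\mu$, and a flat line-bundle connection is determined by its monodromy character. The main technical point of the whole argument is the identification $\pi_g^*\omega=(g+1)\frac{dy}{y}$; everything after that reduces to the elementary fact that logarithmic derivatives of global meromorphic functions have integer periods.
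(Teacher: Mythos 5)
Your argument is correct, and it is essentially the intended one: the paper itself only cites the analogous Proposition 3.1 of \cite{BDHH}, and what you have written out -- pulling back the diagonal Fuchsian system along the cyclic cover adapted to its sign pattern, identifying $\pi_g^*\omega=(g+1)\,d\log y$, and using that periods of $d\log y$ lie in $2\pi i\,\Z$ together with $\rho(g+1)=l$ (resp.\ $l/2$ with $l$ odd) -- is precisely the mechanism behind that reference. The only point worth making explicit is the final step from ``trivial monodromy with integer residues'' to ``gauge equivalent to the trivial smooth connection via a singular (meromorphic) gauge,'' e.g.\ $\mathrm{diag}(y^{-m},y^{m})$ in the diagonal picture, but the lemma's statement already treats these as synonymous.
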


\begin{proof}
The proof is completely analogous to the proof of Proposition 3.1 in \cite{BDHH} (see also \cite[Theorem 3.2(5)]{HHSch}) by adapting  the covering  \eqref{Sigmaa} and the reducible Fuchsian systems  \eqref{reducibleFuchs} considered to the different sign choice  $\sigma_2=1$ and $\sigma_3=\sigma_4=-1$ in this paper. The same holds for the choice  $\sigma_4=1$ and $\sigma_2=\sigma_3=-1$. 
\end{proof}

\begin{lemma}\label{tausymcon2}
Let $\nabla=\nabla^{a,\chi, r }$ be a connection on $T_{\tau}^2$ with $[\eta^*\overline{\nabla}]=[\nabla].$ Then
$$x=\Tr(X) \in \R,\quad\text{and} \quad y=\Tr(Y)\,\in\R \quad \text{and} \quad z_1=\Tr(YX) \,=\,
\overline{z}_2=\overline{\Tr(Y^{-1}X) }.$$
In particular, the representation is real if and only if $z_1=z_2 \in \R$.
\end{lemma}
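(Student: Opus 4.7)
The plan is to derive all three identities from the single symmetry relation
\[
\Tr\bigl(\rho(\gamma)\bigr)\;=\;\overline{\Tr\bigl(\rho([\eta\circ\gamma])\bigr)}\qquad\text{for every }\gamma\in\pi_1(T^2_\tau\setminus\{o\},p_0),
\]
which follows from the hypothesis $[\eta^*\overline\nabla]=[\nabla]$. Indeed, the monodromy representation of $\eta^*\overline\nabla$ based at $p_0$ is given by $\gamma\mapsto\overline{\rho(\eta\circ\gamma)}$ (up to an overall conjugation by the parallel transport along a path from $p_0$ to $\eta(p_0)$), and gauge equivalence of $\eta^*\overline\nabla$ and $\nabla$ forces the two trace functions on $\pi_1$ to agree; since the trace is base-point and conjugation invariant, only the free homotopy class of $\eta\circ\gamma$ in $T^2_\tau\setminus\{o\}$ enters. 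The task thus reduces to identifying three specific free homotopy classes.

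For $\gamma_x$, the loop $\eta\circ\gamma_x$ is a horizontal loop at height $-\tau/4\equiv 3\tau/4$ with the same orientation as $\gamma_x$; a vertical sliding homotopy through $y=\tau/2$ (the second fixed line of $\eta$) avoids the puncture $o=[0]$ and establishes $[\eta\circ\gamma_x]=[\gamma_x]$, whence $x=\overline{x}$. For $\gamma_y$, the loop $\eta\circ\gamma_y$ is the vertical loop at $x=1/4$ traversed downward, hence freely $[\gamma_y^{-1}]$, and $\Tr(Y^{-1})=\Tr(Y)$ in $\SL(2,\C)$ gives $y=\overline{y}$. For the composite $\gamma_y\gamma_x$, one uses that $\eta$ commutes with the concatenation of parametrised loops to obtain $\eta\circ(\gamma_y\gamma_x)=(\eta\gamma_y)(\eta\gamma_x)$, namely a horizontal segment at $y=3\tau/4$ followed by a downward vertical segment at $x=1/4$; sliding the horizontal segment downward from $3\tau/4$ through $\tau/2$ to $\tau/4$ while keeping the vertical segment at $x=1/4$ gives a free homotopy onto $\gamma_y^{-1}\gamma_x$ in which no intermediate loop crosses $y=0$ or $x=0$, so the puncture is avoided. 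This yields $z_1=\Tr(YX)=\overline{\Tr(Y^{-1}X)}=\overline{z_2}$.

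For the final equivalence, equation \eqref{character-equation} is quadratic in $z$ with coefficients that are real when $x,y,r\in\R$, so its two roots $z_1,z_2$ are either both real or complex conjugate; the relation $z_1=\overline{z_2}$ just established is consistent with both scenarios, and the equality $z_1=z_2$ forces the roots to be real. Combined with $x,y\in\R$, this places $(x,y,z_1)$ in the real locus of the character variety, and Theorem \ref{goldman} identifies the representation as conjugate into $\SL(2,\R)$. Conversely, any real representation has real trace on every loop, so $z_1\in\R$ and then $z_1=\overline{z_2}=z_2$. The subtle step I foresee is the third free-homotopy identification: products of free homotopy classes are not a priori well-defined, so the homotopy from $\eta\circ(\gamma_y\gamma_x)$ to $\gamma_y^{-1}\gamma_x$ must be produced explicitly and verified to miss $o$, which is why sliding through $y=\tau/2$ rather than $y=0$ is essential.
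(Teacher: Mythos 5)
Your proof is correct and is in substance the argument the paper relies on: the paper's own proof of this lemma merely cites the analogous \cite[Lemma 4.6]{BDHH} for the first part (with a footnote warning that the involution $\eta$ differs) and Theorem \ref{goldman} for the second, and your derivation --- the trace identity $\Tr\rho(\gamma)=\overline{\Tr\rho(\eta\circ\gamma)}$ extracted from the gauge equivalence $[\eta^*\overline\nabla]=[\nabla]$, combined with the explicit free-homotopy identifications of $\eta\circ\gamma_x$, $\eta\circ\gamma_y$ and $\eta\circ(\gamma_y\gamma_x)$ carried out through the fixed circle $v=\tau/2$ so as to miss the puncture --- is exactly the computation being delegated there. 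The one small imprecision is in your last step: for a real point with $x,y,z\in[-2,2]$ Theorem \ref{goldman} yields an $\mathrm{SU}(2)$-representation rather than an $\SL(2,\R)$-one, so ``real'' in the lemma should be read as ``real point of the character variety'' (as in Remark \ref{41d}) rather than ``conjugate into $\SL(2,\R)$''.
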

\begin{proof}
The first part is completely analogous  to the proof of Lemma \cite[Lemma 4.6]{BDHH}\footnote{The reader should be aware of the different real involutions $\eta$ used in \cite{BDHH} and in this paper.}.
 The second part is a direct consequence of Theorem \ref{goldman}.
\end{proof}

\begin{remark}\label{41d}
The Lemma implies that an $\eta$-invariant representation is real if and only if the discriminant of \eqref{character-equation}, as quadratic equation in $z,$  is zero.  To be more explicit this gives the extra equation
\begin{equation}\label{extraaffineeq}x^2y^2-4x^2-4y^2+8(1+\text{cos}(2 \pi r))=0.\end{equation}

The main advantage of considering this very symmetric case is that the space of real and  $\eta$-invariant representations becomes real 1-dimensional, see Figure \ref{gammaz}.
The four different non-compact  real components of the character variety (see Theorem \ref{goldman}) correspond to the four different
spin bundles over the torus. The trace coordinates of the  four non-compact components differ only by signs, and these four components are mapped into the same component of real representations of the 4-punctured sphere via abelianization \eqref{abelintraces}.

\begin{figure}[ht]\centering
\includegraphics[width=0.45\textwidth]{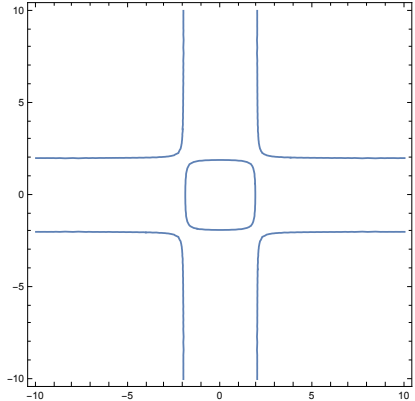}
\caption{The space of real representations invariant under $\eta$, for $r=\tfrac{1}{10}$ ($\wt r = \tfrac{3}{10}$), which is the parabolic weight of our dodecahedron example.}
\label{gammaz}
\end{figure}
\end{remark}

\subsection{Hitchin section}\label{sec:hitchinsec}
Consider a spin structure  
\[S\longrightarrow T_{\tau}^2\]
on the rectangular torus $T_{\tau}^2,$ i.e., $S^{\otimes 2} \cong K_{T^2_\tau}$, and the strictly stable and strongly parabolic Higgs bundle $(E, \wt r, l_0, \Phi)$ given by the data
\[E=S\oplus S;\quad  r \in (0, \tfrac{1}{2});\quad  l_o=\C S_o\oplus 0;\quad \Phi=\begin{pmatrix} 0 & 0\\dw &0\end{pmatrix}.\]
Then Hitchin-Kobayashi correspondence on non-compact curves \cite{Si1}
yields a compatible flat connection $\nabla$ satisfying   \[[\nabla]\in  \mathcal M_{1,1}^{r}\]
with $\mathrm{SL}(2,\R)$-monodromy. The underlying holomorphic bundle is hereby the non-trivial extension of $S$ by itself and the parabolic line $l_0$ is contained in the unique holomorphic line
subbundle $S$ \footnote{The later follows from the fact that the harmonic metric solving the self-duality equations must be diagonal by uniqueness.}.

For every $q\in\C$  a strongly parabolic Higgs field 
\[\Phi^q=\begin{pmatrix} 0 & q dw\\dw &0\end{pmatrix}\]
on the parabolic bundle $(E, r,  l_o)$ corresponds to a compatible flat connection $\nabla^q$ with $[\nabla^q]\in \mathcal M_{1,1}^{r}$
 having $\mathrm{SL}(2,\R)$-monodromy (see Simpson \cite{Si1}) -- this is a particular instance of the so-called {\em parabolic Hitchin-Kobayashi correspondence}.

\begin{lemma} \label{lemma:real Hitchin}
For fixed parabolic weight $r \in (0, \tfrac{1}{2})$, the element   $[\nabla^q]\in \mathcal M_{1,1}^{r}$ is $\eta$-invariant if and only of $q \in \R.$  
\end{lemma}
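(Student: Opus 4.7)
The idea is to exploit the injectivity of the Hitchin section $q \mapsto [\nabla^q]$ together with the equivariance of the non-abelian Hodge correspondence under the anti-holomorphic involution $\eta^{*}\overline{(\cdot)}$. Concretely, I want to show the identity
\[
[\eta^{*}\overline{\nabla^{q}}] \;=\; [\nabla^{\overline{q}}]
\]
in $\mathcal{M}_{1,1}^{r}$ for every $q \in \C$. Granting this, $[\nabla^q]$ is $\eta$-invariant iff $[\nabla^{\overline q}]=[\nabla^q]$, and the injectivity of $q\mapsto[\nabla^q]$ will force $\overline q = q$.

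The first step is a direct computation on the Higgs bundle side. Since $\eta$ is anti-holomorphic with $\eta^{*}dw = d\overline{w}$ and fixes the puncture $o$, one checks that $\eta^{*}\overline{(\cdot)}$ sends the Higgs field
\[
\Phi^{q}\;=\;\begin{pmatrix} 0 & q\,dw \\ dw & 0 \end{pmatrix}
\quad\text{to}\quad
\eta^{*}\overline{\Phi^{q}}\;=\;\begin{pmatrix} 0 & \overline{q}\,dw \\ dw & 0 \end{pmatrix}\;=\;\Phi^{\overline{q}},
\]
because conjugating the entries produces $d\overline{w}$'s which are converted back to $dw$'s by $\eta^{*}$. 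The second step is to verify that the underlying parabolic data $(E,l_{o})=(S\oplus S,\,\C S_o\oplus 0)$ is itself fixed by $\eta^{*}\overline{(\cdot)}$: the spin bundle $S$ on the rectangular torus $T^{2}_{\tau}$ can (and will) be chosen $\eta$-invariant, and the parabolic line is determined by the first summand at the fixed point $o=\eta(o)$, so the parabolic structure is preserved.

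The third step is to invoke the uniqueness in the parabolic Hitchin--Kobayashi correspondence of Simpson / Biquard. Applying $\eta^{*}\overline{(\cdot)}$ to the flat connection $\nabla^{q}$ produces a flat connection whose associated stable parabolic Higgs bundle is precisely $(E, l_{o}, \Phi^{\overline{q}})$. By uniqueness of the harmonic metric, the flat connection associated to this new Higgs bundle is $\nabla^{\overline{q}}$, so $[\eta^{*}\overline{\nabla^{q}}]=[\nabla^{\overline{q}}]$ in $\mathcal{M}_{1,1}^{r}$. Combined with the injectivity of $q\mapsto[\nabla^q]$ -- which I would check by a short computation: the automorphisms of $(E,l_{o})$ preserving the trivial determinant are of the form $\mathrm{diag}(a,1/a)$ up to strictly upper triangular factors, and conjugating $\Phi^{q}$ by them forces $a=\pm 1$ and the off-diagonal free parameter to be zero, giving $q'=q$ -- one concludes that $[\nabla^q]$ is $\eta$-invariant iff $q\in\R$.

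\textbf{Main obstacle.} The genuine technical point is the compatibility check in step two: one must make sure that the chosen spin bundle $S$ and the isomorphism realizing its $\eta$-invariance are compatible with the natural $C^{\infty}$-identifications used to transport $\Phi^q$ to $\Phi^{\overline q}$. Once this bookkeeping is in place, the uniqueness in parabolic Hitchin--Kobayashi makes the rest of the argument essentially formal.
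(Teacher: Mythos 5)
Your proposal is correct and follows essentially the same route as the paper: the paper's (two-line) proof likewise reduces the statement to the observation that the parabolic Higgs pairs $(E,r,l_o,\Phi^q)$ and $\eta^*\overline{(E,r,l_o,\Phi^q)}$ are gauge equivalent if and only if $q\in\R$, and then invokes uniqueness in the parabolic Hitchin--Kobayashi correspondence. You merely make explicit the intermediate steps the paper leaves implicit, namely the computation $\eta^*\overline{\Phi^q}=\Phi^{\overline q}$, the $\eta$-invariance of the parabolic data, and the injectivity of $q\mapsto[\nabla^q]$; all of these check out.
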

 
\begin{proof} The parabolic Higgs pairs $(E, r,  l_o, \Phi^q)$ and 
$\eta^*\overline{(E, r, l_o, \Phi^q)}$ are gauge equivalent if and only if 
$q \in \R$. Therefore the Lemma follows from the Hitchin-Kobayashi correspondence for 
parabolic bundles.
\end{proof}

\begin{lemma}\label{trivialholo}
Let $\wt r=\frac{l}{k} \in (\frac{1}{4}, \frac{1}{2})$ be rational with coprime $l,k \in \N$ and  $r =2 \wt r-\frac{1}{2}=\frac{4l-k}{2k}$.  The  connection $\nabla^{a,\chi, r }$  (on $T_{\tau}^2$)  induces on $\Sigma_g$ 
as defined in \eqref{eqRS}
 the trivial holomorphic structure if $\chi = \tfrac{\pi }{4 \tau} \in \R$ modulo sign and lattice points.
Likewise, 
 the  connection $\nabla^{a,\chi, r }$  (on $T_{\tau}^2$)  induces on 
$\wt\Sigma_g$ \eqref{eqnRS2}
 the trivial holomorphic structure if $\chi =i \tfrac{\pi }{4} \in \R$ modulo sign and lattice points.
\end{lemma}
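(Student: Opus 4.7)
The plan is to combine Lemma \ref{Lemma:4-spin}, which matches the three orbits of quarter-lattice points in $\mathrm{Jac}(T^2_\tau)$ with the three strictly semistable parabolic bundles on $\mathcal{S}_4$ indexed by the sign choices $(\sigma_2,\sigma_3,\sigma_4)$, with Lemma \ref{trivialoncovering}, which says that pulling back the reducible Fuchsian system $D$ from \eqref{reducibleFuchs} to the covering $\Sigma_g$ (or $\wt\Sigma_g$) with matching sign choice yields trivial monodromy after an appropriate $\Z_2$-twist. Since a strongly parabolic Higgs field does not change the underlying holomorphic bundle, the conclusion will follow as soon as $\chi=\tfrac{\pi}{4\tau}$ is identified with the sign choice $(1,-1,-1)$ used in the definition \eqref{eqRS} of $\Sigma_g$.

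For the first claim, I first observe that $\chi_0:=\tfrac{\pi}{4\tau}$ is a quarter-lattice but not half-lattice point of $\mathrm{Jac}(T^2_\tau)\cong\C/(\pi i\Z+\tfrac{\pi}{\tau}\Z)$. The orbit of $\chi_0$ under translation by half-lattice points consists of all real quarter-lattice points, modulo sign and lattice translations, and the explicit calculation in the proof of Lemma \ref{Lemma:4-spin}---namely that the abelianization of the reducible Fuchsian system with signs $\sigma_2=1,\,\sigma_3=\sigma_4=-1$ produces on the double cover $\Sigma_2$ the spin bundle $L(w_1-w_2)$, whose Dolbeault operator is $\overline\partial^0\pm\tfrac{\pi}{4\tau}d\overline w$---identifies this orbit with precisely the sign choice $(1,-1,-1)$. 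Consequently, for $\chi_0$ and arbitrary $a$, the connection $\nabla^{a,\chi_0,r}$ on $T^2_\tau$ corresponds via abelianization to a logarithmic connection of the form $D+\Phi$ on $\mathcal{S}_4$, where $D$ is as in \eqref{reducibleFuchs} with signs $(1,-1,-1)$ and $\Phi$ is a strongly parabolic Higgs field (depending on $a$). Lemma \ref{trivialoncovering}, applied with the matching sign choice, then yields that $\nabla^{\Z_2}\otimes\pi_g^*(D+\Phi)$ is a flat connection whose monodromy is trivial (with $\nabla^{\Z_2}$ trivial when $k$ is odd, so in that case $\pi_g^*(D+\Phi)$ itself has trivial monodromy). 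Hence the underlying holomorphic rank $2$ bundle on $\Sigma_g$ is the trivial one $\mathcal{O}_{\Sigma_g}^{\oplus 2}$.

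The second statement about $\wt\Sigma_g$ is proved by exactly the same argument with the sign choice $(\sigma_2,\sigma_3,\sigma_4)=(-1,-1,1)$ matching the defining equation \eqref{eqnRS2}. The relevant orbit of $\chi$-values is now the purely imaginary orbit containing $\chi=i\tfrac{\pi}{4}$, modulo sign and lattice translations; this is obtained from the first case either by interchanging the roles of $p_2$ and $p_4$ in the abelianization computation (and tracking the resulting eigenline bundles of the special Higgs field \eqref{specialHiggsfield}), or by using Lemma \ref{tausymcon} where the purely imaginary $\chi$ branch corresponds to the $\eta$-symmetry tied to the $\sigma_4=1$ sign choice.

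The main point in the proof is the identification of the three orbits of quarter-lattice points on $\mathrm{Jac}(T^2_\tau)$ with the three sign choices on $\mathcal{S}_4$; this is already contained in the proof of Lemma \ref{Lemma:4-spin}, so what remains is essentially bookkeeping: for each relevant sign choice, one reads off from the abelianization which $\chi$-orbit corresponds to the covering $\Sigma_g$ or $\wt\Sigma_g$, and then Lemma \ref{trivialoncovering} supplies the trivialization of the pulled-back rank $2$ bundle. The slightly delicate point is to keep track of the $\Z_2$-twist in the even-$k$ case, but this is absorbed in the statement of Lemma \ref{trivialoncovering} and does not affect triviality of the underlying holomorphic bundle.
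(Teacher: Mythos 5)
Your argument is correct and follows the same route as the paper, whose proof is precisely the observation that the statement is a reformulation of Lemma \ref{trivialoncovering} combined with Lemma \ref{Lemma:4-spin}; you have simply spelled out the bookkeeping (identifying the quarter-lattice orbit of $\chi$ with the sign choice $(\sigma_2,\sigma_3,\sigma_4)$, noting that a strongly parabolic Higgs field does not alter the induced holomorphic structure, and invoking the $\Z_2$-twisted trivialization on the covering). No gaps to report.
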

\begin{proof}
The proof is completely analogous to the proof of \cite[Proposition 3.1]{BDHH}. In fact it is just a reformulation of Lemma \ref{trivialoncovering} using Lemma \ref{Lemma:4-spin}.
\end{proof}
 In view of the above lemma we define
\begin{definition}\label{sec:ass}
We denote by 
 $H \in$ Jac$(T^2_\tau)$ the holomorphic structure  (or the corresponding parabolic bundle on $\mathcal S_4$)  that lifts to the trivial holomorphic structure over the associated compact Riemann surface $\Sigma$, i.e., either  $H =  \tfrac{\pi}{4\tau}$ for the sign choice $\sigma_2 = 1, \sigma_3 = \sigma_4 = -1$ and $\Sigma=\Sigma_g,$ or $H= i \tfrac{\pi}{4}$ for $\sigma_4 = 1, \sigma_2 = \sigma_3 = -1$ and  $\Sigma=\wt \Sigma_g.$\end{definition}
 
\begin{remark} \label{r:trivialholo} 
  If $H$ lies in the image of the map 
$$q  \in \R \longmapsto \chi(\nabla^q) \in \text{Jac}(T^2_\tau),$$ 
the two Lemmas \ref{lemma:real Hitchin} and \ref{trivialholo} show that 
the pull-back of the corresponding connection $\nabla^{q_{H}}$ to $\Sigma$   has trivial holomorphic structure and real monodromy representation.
\end{remark}

\section{Grafting and Spin Structures}\label{grafting}
\subsection{Complex projective structures}

Complex projective structures (or simply projective structures) on Riemann surfaces
are classical objects in the theory of Riemann surfaces, see \cite{Gu1} and the references therein. Consider an atlas $(U_\alpha,z_\alpha)_{\alpha\in\mathcal U}$ of  a Riemann surface for which all the transition functions
\[z_\beta\circ z_\alpha^{-1}(z)= \frac{a z+b}{c z+d} \quad \text{for some (constant)} \begin{pmatrix}a&b\\c&d\end{pmatrix}\in\text{GL}(2,\C)
\footnote {The coefficients $a, b, c, d$ depend on $\alpha,\beta\in\mathcal U$ and on the connected component of $U_{\alpha} \cap U_{\beta}$ where the transition function is defined.} \] are M\"obius transformations.  Such an atlas is called a (complex) projective atlas. Two projective atlases are equivalent if their union remains a projective atlas. An equivalence classes of projective atlases is a (complex) {\it projective structure}.

Naturally, the complex projective space $\CP^1$  itself is equipped with its standard  projective structure. For elliptic curves the natural projective structure is obtained by identifying it with the flat torus $\C /\Lambda$. All transitions functions are in this case translations.

On a compact surface $\Sigma$  of genus $g\geq2$ a special projective structure is provided by the  uniformization theorem. In this case, there is a global biholomorphism from the universal cover of $\Sigma$ to
Poincar\'e's upper-half plane $\mathbb H^2\subset\CP^1$ which is equivariant with respect to  a group homomorphism from the fundamental group of $\Sigma$ into the group of  PSL$(2,\R)$-valued M\"obius transformations (with image a Fuchsian subgroup in
PSL$(2,\R)$).  This map to  the hyperbolic plane $\mathbb H^2\subset\CP^1$ coincides with the developing map of the unique   hyperbolic metric (i.e. having constant  curvature $-1$) on $\Sigma$ compatible with the complex structure.
  
  In general, a complex projective structure on $\Sigma$ gives rise to a  developing map $dev$ from the  universal cover of $\Sigma$ to $\CP^1$ which is a local biholomorphism (but not necessarily a proper injective  map). This developing map is equivariant with respect to 
  a group homomorphism from the  fundamental group of $\Sigma$ into  PSL$(2,\C)$ (uniquely defined up to conjugation in the M\"obius group) which is referred to as  the {\it monodromy} of the complex  projective structure. By abuse of notation a complex projective structure is called a {\it real projective structure} if the corresponding   monodromy takes values in  PSL$(2,\R)$ (up to conjugation in PSL$(2,\C))$ \cite{Falt, Tak}.

\subsection{$\SL(2,\C)$-Opers}\label{opers}

A projective structure on a compact Riemann surface $\Sigma$ of genus $g\geq2$ can also be described using  particular flat $\SL(2,\C)$-connections, called opers. Let $\nabla$ be a flat $\SL(2,\C)$-connection on the rank two trivial smooth bundle $V=\underline\C^2\to\Sigma$ such that its induced holomorphic structure 
$\overline\partial^\nabla$ admits a holomorphic sub-line bundle $S$ of maximal degree $(g-1).$
Take a complementary $C^\infty$-bundle $S^*\subset V$ and write
\begin{equation}\label{eq:oper}\nabla=\begin{pmatrix} \nabla^{S}& \psi\\ \varphi &\nabla^{S^*}\end{pmatrix}\end{equation}
with respect to $V=S\oplus S^{*}.$ As $S$ is a holomorphic subbundle $\varphi$ is a $(1,0)$-form with values in 
Hom$(S,S^*)$. Moreover, the flatness of $\nabla$  implies that
\[\varphi\in H^0(\Sigma, K_\Sigma(S^{*})^2).\]
If $\varphi\equiv 0$,  then $S$ is a  parallel sub-line bundle of $V$ with respect to the connection $\nabla$ and, consequently,  it must have  degree zero (and not $g-1$). 
Therefore, for  $g\geq2$  the holomorphic section  $\varphi$ is not identically zero. Moreover, since the degree of $K_\Sigma(S^{*})^2$ is zero, the section $\varphi$ is nowhere vanishing. Therefore  $S$ is a  spin bundle, i.e., $S^2=K_\Sigma$ as holomorphic line bundles, and  the section $\varphi$ can be identified with the constant section $\bf 1$ of the trivial holomorphic line bundle  $\underline{\C}\rightarrow \Sigma$.

\begin{definition}
A flat $\SL(2,\C)$-connection of the form \eqref{eq:oper} on a compact Riemann surface is called an oper.
\end{definition}

Given an oper $\nabla$ on the Riemann surfaces $\Sigma$ the induced projective structure is obtained as follows. Consider, on an open simply connected subset $U\subset \Sigma$, two linear independent $\nabla$-parallel sections of $V= S \oplus S^*$ 

\[\Psi_1=\begin{pmatrix} x_1\\ y_1\end{pmatrix},\quad \Psi_2=\begin{pmatrix} x_2\\ y_2\end{pmatrix}.\]
Then $y_1$ and $y_2$ are holomorphic sections of $S^*$ as the projection $V\to V/S$ is holomorphic.  The quotient $z=y_1/y_2$ defines a holomorphic map to $\CP^1$.  Choosing two other linear independent parallel sections
\[\widetilde\Psi_1=\begin{pmatrix} \wt x_1\\ \wt y_1\end{pmatrix}= a\Psi_1+b\Psi_2 \quad  \widetilde\Psi_2=\begin{pmatrix} \wt x_2\\ \wt y_2\end{pmatrix}= c\Psi_1+d\Psi_2\]
(with $ad-bc=1)$ amounts into
\[z=y_1/y_2\longmapsto \wt z = \wt y_1/\wt y_2 =  \frac{a y_1+ b y_2}{c y_1+d y_2}=\frac{ az +b}{cz+d},\]
which is a M\"obius transformation. Because $\varphi$  is nowhere vanishing, the map $z$ is unbranched, i.e., $z$ is a local (holomorphic) diffeomorphism, and we obtain a projective atlas.

\subsection{Grafting} Grafting, or more precisely $2\pi$-grafting of the uniformization, introduced by Maskit \cite{Mas}, Hejhal \cite{Hej} and Sullivan-Thurston \cite{ST}, is a procedure to obtain infinitely many distinct real projective structures. Our short description here follows Goldman \cite{Goldman}.

Consider the real projective structure given by the uniformization (Fuchsian) representation of a Riemann surface $X$ and its developing map $dev$ to $\mathbb H^2\subset\CP^1$.
Every non-trivial element of the first fundamental group $[\gamma] \in \pi_1(X)$ can be represented by a unique geodesic $\gamma \subset X$ with respect to the constant curvature $-1$ metric on $X$ (up to orientation).
Under the developing map $\gamma$ is mapped to a circular arc.
The corresponding full circle $C$ intersects the boundary at infinity of the hyperbolic disc at two points.
The monodromy of the uniformization representation along $\gamma$ is given
by an element $A \in \SL(2,\R)$, unique up to sign.
The sign depends on the lift of the monodromy  representation from $\PSL(2,\R)$  to  $\SL(2,\R)$. 

On the other hand, a hyperbolic transformation conjugated to $A\in \SL(2,\R)$ gives 
rise to a Hopf torus $T_A$ endowed with a projective structure as follows. There exist 
two (unique) circles $S_1$ and $S_2 \subset \C P^1$ that are invariant under the 
transformation $A.$ Let $C_1$ be another circle in $\C P^1$ intersecting both $S_1$ and 
$S_2$ perpendicularly, and consider $C_2=A(C_1).$ Since $C_1$ and $C_2$ have no 
intersection points, they bound an annulus $\mathbb A$. The torus $T_A$ is then obtained 
from gluing $C_1$ and $C_2$ via $A$ and possesses by construction a projective structure. 
The monodromy of the corresponding  projective structure on $T_A$  is trivial along $C_1$, while it is $A$ along
the curve obtained from projecting the arc on $S_1$ between the intersection points with $C_1$ and $C_2=A(C_1)$.
When cutting  $T_A$ along $S_1$ (rather than $C_1$) we obtain another annulus and we 
denote this cylinder together with its two boundary components by $|T_A|.$

Grafting along the geodesic $\gamma \subset X$ glues $X$ and the appropriate Hopf torus $T_A$ for the hyperbolic $A \in \SL(2,\R)$ representing the monodromy along $\gamma$. More explicitly, let $S_1$ be the unique  circle in $ \C P^1 $ containing  the image of the geodesic $\gamma$ under the  uniformization map, and $S_2$ be the boundary of $\mathbb H^2.$ Then each of the two boundary components of $|T_A|$ can be identified with the image of the geodesic $\gamma.$ Therefore, $ X\setminus \gamma$ and $|T_A|$ can be glued to obtain a new Riemann surface $X^G$ without boundary (the notation $X^G$ stands for grafted $X$). Moreover, the induced new projective structure has the same monodromy as the  uniformization of $X$. In particular, it is a real projective structure.
Note that (the developing map $dev_G$ of) the projective structure on $X^G$ induces a curvature -1 metric away from a singularity set. The singularity set is given by the intersection of the annulus $\mathbb A$ with the circle $S_2$ considered as the boundary of the hyperbolic plane $\mathbb H^2$ consisting of two smooth 
curves that are both, considered as curves in $X^G$, closed and homotopic to $\gamma$.

Iteration leads to the construction of infinitely many Riemann surfaces with distinct real projective structures, but their monodromy remains the same uniformization monodromy of the initial Riemann surface $X$.
Altogether, starting with the isotopy class $\mathcal C$ of a simple closed curve on $X$, and applying the grafting construction along  the closed curve
yields (infinitely many) different Riemann surfaces of the same genus with real projective structures. We will refer to grafting along a simply closed geodesic $\gamma$ once as simple-grafting and neglect the possibility of grafting multiple times along the same geodesic in the following.

\begin{remark}
In what follows, $\Sigma$ will be the Riemann surface $X^G$ obtained from grafting (once) the Riemann surface $\Sigma^{uG}:=X$, where the superscript ${uG}$ stands for ungrafting.
\end{remark}

\subsection{Spin structures} \label{Section Spin}
Let $\Sigma$ be a compact Riemann surface of genus $g$.
A spin structure is a choice of a holomorphic line bundle $S$ with \[S^{\otimes 2}=K_\Sigma.\]
Two spin bundles   differ by a holomorphic line bundle which squares to  the trivial holomorphic line  bundle. If we equip this holomorphic  line bundle with its unique flat unitary connection, its monodromy takes values in $\Z_2$ (as the monodromy squares to the identity). Such a holomorphic line bundle together with its flat connection is called a $\Z_2$-bundle. It is determined by its  monodromy representation which is a group homomorphism from the fundamental group of $\Sigma$  into the abelian group $\Z_2.$  Hence the space of spin structures is an affine space with underlying translation   vector space  $H_1(\Sigma,\mathbb Z_2)$.

From a topological point of view, a spin structure is given by a quadratic form
\[Q\colon H_1(\Sigma,\mathbb Z_2)\to\mathbb Z_2,\]
whose underlying bilinear form is the intersection form (mod 2), see \cite{John, At}. The relationship between these two viewpoints can be explained as follows: Fix a given line bundle  $S$ with $S^{\otimes 2}=K_\Sigma.$ Then,  for every closed and immersed curve $\delta \colon S^1\to\Sigma$ there is a unique $\omega \in\Gamma(S^1,\delta^*K_\Sigma)$ with
\[\omega(\delta')=1.\]

Let  $\widetilde\delta\,\colon\, \mathbb R \,\longrightarrow\, \Sigma$ be the   immersed curve defined as the lift of $\delta$ to the universal covering  $\mathbb R \to S^1$.  Consider  the pull-back $\widetilde\omega$ of $\omega$  to  a (non-vanishing)  section of $\widetilde\delta^*K_\Sigma$.  Up to sign, there exists a unique   section 
\[\widetilde s\,\in\,\Gamma(\mathbb R,\,\widetilde\delta^*S)\] such that  $\widetilde{s}^2
\,=\,\widetilde\omega$. The $\mathbb Z_2$-monodromy of $S$ along $\delta$ is $1$  if $\widetilde s$ (or, equivalently, $-\widetilde s$)  is invariant by the action of the fundamental group of $S^1$ on $\mathbb R$ by deck-transformations. The $\mathbb Z_2$-monodromy of $S$ along $\delta$ is $-1$  if $\widetilde s$   is  transformed in $-\widetilde s$ by the action of the generator of the  fundamental group of $S^1$ on $\mathbb R$.  This  $\mathbb Z_2$-monodromy only depends on the class of $\delta$ in  $H_1(\Sigma,\mathbb Z_2)$. Associating to the class of $\delta$ in  $H_1(\Sigma,\mathbb Z_2)$ the above monodromy with values in $\mathbb Z_2$ uniquely determines a quadratic form  $Q\colon H_1(\Sigma,\mathbb Z_2)\to\mathbb Z_2$ whose underlying bilinear form is the intersection form (mod 2), and hence a topological
spin structure, see \cite{John} and also \cite{P} and \cite[Section 10]{Bob}.

\subsubsection{Spin structures and opers}

Let $\nabla$ be an oper on $V\to\Sigma$ given by \eqref{eq:oper} corresponding to real projective structure,
i.e., its monodromy takes values in
\[P\mathrm{SL}(2,\mathbb R)=\text{Isom}(\mathbb H^2).\]

The isomorphism between $S^{\otimes 2}$ and $K_\Sigma$  can be made explicit in two equivalent ways.

First, consider for  $p \in \Sigma$ an arbitrary $s_p\in S_p\subset V_p,$ then
\[\nabla s_p\wedge s_p\in (K_\Sigma)_p\]
is well-defined and gives rise to a bilinear map $S_p\times S_p\to (K_\Sigma)_p,$ since $S$ is a holomorphic subbundle of $V.$ This bilinear form is non-degenerate, as $\varphi$ is non-vanishing, and defines a holomorphic isomorphism  between $S^{\otimes 2}$ and $K_\Sigma.$
Likewise,  consider the (locally defined) parallel sections

\[\Psi_1=\begin{pmatrix} x_1\\ y_1\end{pmatrix},\Psi_2=\begin{pmatrix} x_2\\ y_2\end{pmatrix} \in\Gamma(U,S\oplus S^*). \]

determined by the initial condition
\[(\Psi_1)_p=\begin{pmatrix} s_p\\ 0\end{pmatrix}\quad\text{and}\quad (\Psi_2)_p=\begin{pmatrix} 0\\ t_p\end{pmatrix}\]
with
\[(t_p,s_p)=1.\]
Then, a direct computation shows
\[s_p\otimes s_p=\omega_p\in (K_\Sigma)_p\]
where
\[\omega\,=\, -d(\tfrac{y_1}{y_2}).\]

Another way to obtain the spin bundle $S$ is the following. The standard projective structure on $\mathbb CP^1$ is induced by the trivial connection $d$ on the trivial holomorphic rank 2 bundle.  Its  spin bundle is the tautological bundle $\mathcal O(-1), $ i.e,  the fiber at $l\in\CP^1$ is the line $l.$ Consider for a general Riemann surface $\Sigma$ a projective structure and developing map $dev$ induced by an oper $(V,\nabla)$.  Let $M$ be the monodromy homomorphism of $\nabla$. Then the bundle $V$ is given by the twisted bundle
\[V\,=\,(\widetilde\Sigma\times\C^2)/\sim,\]
where $(p, v) \sim (\wt p, \wt v)$ if and only if $\wt p = \gamma_* p$ and $\wt v= M_\gamma v$ for a $\gamma \in \pi_1(\Sigma).$ The spin bundle $S$
is then given by the twisted pull-back of the tautological bundle $\mathcal O(-1) \rightarrow \C P^1$ via the developing map
\begin{equation}\label{inducedspin}
S=dev^*\mathcal O(-1)/\sim.
\end{equation}

Due to its topological invariance, continuous  deformations of the Riemann surface and the oper do not change the (topological) spin structure.
(For closed curves in the moduli space of Riemann surfaces that are not null-homotopic, the deformation of the spin structure along the curve might have monodromy, see for example \cite{At}. This corresponds to a non-trivial action of the mapping class group.)

Note that the difference between two quadratic forms on $H_1(\Sigma, \mathbb Z_2)$ 
corresponding to spin structures  is given by a linear form on $H_1(\Sigma, \mathbb Z_2)$, see  \cite{John}.
\begin{lemma}\label{fundamentallemma}
Let $\Sigma$ be a compact Riemann surface.
The uniformization connection $\nabla^U$ and the simple-grafting connection $\nabla^G$, along  an isotopy class $\mathcal C$ of a simple non-null-homotopic   curve $\mathcal C$  on $\Sigma$,
 (whose monodromy lies in the same connected component of real representations) 
induce different spin structures on the Riemann surface $\Sigma.$ More precisely, the difference of the corresponding  spin structures  is determined by the linear form  on $H_1(\Sigma, \mathbb Z_2)$  obtained by   inserting a representative of the  class $\mathcal C \in H_1(\Sigma, \mathbb Z_2)$ into the intersection form mod 2 on $\Sigma$.
\end{lemma}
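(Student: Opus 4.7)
The plan is to identify the flat $\mathbb Z_2$-bundle $L\,:=\,S^U\otimes (S^G)^{-1}$ on $\Sigma$ and to compute its $\mathbb Z_2$-valued monodromy character. Since $S^U$ and $S^G$ are both holomorphic square roots of $K_\Sigma$, the line bundle $L$ satisfies $L^{\otimes 2}\,=\, \mathcal O_\Sigma$ and therefore carries a canonical flat structure with monodromy in $\{\pm 1\}$. The resulting character $\epsilon_L\colon\pi_1(\Sigma)\to\{\pm 1\}$ factors through $H_1(\Sigma,\mathbb Z_2)$, and by the topological description recalled in Section \ref{Section Spin} it equals the difference $Q^U-Q^G$ of the two associated quadratic forms. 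It therefore suffices to show that for every simple closed curve $\delta\subset\Sigma$ placed transverse to the geodesic representative $\gamma$ of $\mathcal C$, one has $\epsilon_L([\delta])\,=\,(-1)^{\#(\delta\cap\gamma)}\,=\,(-1)^{[\delta]\cdot[\mathcal C]}$.

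To compute $\epsilon_L([\delta])$ I would evaluate the spin monodromies of $S^U$ and $S^G$ along $\delta$ using the description \eqref{inducedspin} of each $S$ as the twisted pullback of the tautological bundle $\mathcal O(-1)\to\CP^1$ under the respective developing map. Lifting $\delta$ to a path $\wt{\delta}$ in $\wt{\Sigma}$ from a basepoint $\wt{p}$ to $g_\delta\cdot\wt{p}$, where $g_\delta\,\in\,\pi_1(\Sigma)$ is the homotopy class of $\delta$, the images $dev^U\circ\wt{\delta}$ and $dev^G\circ\wt{\delta}$ are paths in $\CP^1$ whose endpoints are identified by the respective $\SL(2,\mathbb R)$-monodromy matrices acting on the tautological fibres; the sign $\epsilon_L([\delta])$ is precisely the quotient of these two identifications.

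The essential observation is a local analysis at each point of $\delta\cap\gamma$. Between crossings, both developed curves lie inside the hyperbolic disc $\mathbb H^2\subset\CP^1$ and, after a controlled homotopy fixing endpoints, their contributions to $\epsilon_L$ cancel. At a transverse crossing, the grafted developing map makes an additional excursion through the Hopf annulus $|T_A|$: its image in $\CP^1$ traverses the annular region between $C_1$ and $C_2\,=\,A(C_1)$ along an arc which is homotopic, fixing endpoints in $\mathbb H^2$ on opposite sides of the invariant circle $S_1$, to a half-revolution around $S_1\,\cong\,\mathbb{RP}^1$. Since the tautological real line bundle over $\mathbb{RP}^1$ is the M\"obius bundle, this half-revolution flips the spin framing and contributes a sign $-1$. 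Summing the contributions over all transverse intersections gives the claimed identity, and in particular $\epsilon_L$ is nontrivial whenever $[\mathcal C]\,\neq\, 0$ in $H_1(\Sigma,\mathbb Z_2)$, yielding distinct spin structures.

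The main obstacle is the rigorous justification of the sign per crossing, since $\nabla^U$ and $\nabla^G$ correspond to different Fuchsian structures on the same underlying topological surface, so the two developed paths cannot be compared literally pointwise. I would address this either by interpolating between the two projective structures through a smooth (not necessarily flat) family and invoking the topological invariance of the spin structure to isolate the jumps to the grafting excursions, or, equivalently, by appealing to Johnson's framing description \cite{John} of spin structures and computing directly that the difference of framings of $\delta$ induced by $dev^U$ and $dev^G$ equals $[\delta]\cdot[\mathcal C]$ mod $2$. The latter is the most direct route given the quadratic-form viewpoint set up in Section \ref{Section Spin}, and reduces the argument to a purely topological count of signed crossings.
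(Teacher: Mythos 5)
Your overall strategy is the one the paper uses: reduce, via the topological invariance of the spin structure and the description \eqref{inducedspin} of the spin bundle as the twisted pull-back of $\mathcal O(-1)$ under the developing map, to showing that each transverse crossing of a test curve $\delta$ with the grafting curve contributes a factor $-1$ to the $\Z_2$-monodromy of $S^U\otimes (S^G)^{-1}$, while curves disjoint from the grafting locus contribute nothing because their developed images are unchanged. The difference is that you explicitly defer the one step that carries all the content --- the sign per crossing --- whereas the paper computes it. Concretely: the developed image of $\delta$ after grafting differs from the developed image of the corresponding curve on the ungrafted surface $\Sigma^{uG}$ by precomposition with a full loop homotopic to the circle $C_1=\{a e^{i\theta}+b\}$ bounding the Hopf annulus; along $C_1$ the unique $\omega$ with $\omega(\delta')=1$ is $-i a^{-1}e^{-i\theta}\,dz$, so any square root is a constant times $e^{-i\theta/2}\sqrt{dz}$, which is anti-periodic in $\theta$. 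That anti-periodicity is the $-1$.

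Two corrections to your sketch. First, the grafting excursion is not a half-revolution around the invariant circle $S_1$: relative to the direct crossing it is a full revolution around the annulus $\mathbb A$, i.e.\ a loop homotopic to $C_1$ (a circle perpendicular to $S_1$), and the sign comes from the odd winding of the tangent framing of $C_1$ in the affine chart --- not from the M\"obius bundle over $S_1\cong\R P^1$, which is a different circle and a different mechanism (the section $\sqrt{dz}=(z,1)$ trivializes $\mathcal O(-1)$ along $C_1$, so the tautological line itself has no monodromy there). Second, of your two proposed remedies only the latter (Johnson's framing count) actually closes the gap, and when carried out it is precisely the $e^{-i\theta/2}$ computation above; interpolating through non-flat connections only re-proves topological invariance and cannot by itself produce the sign. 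With the $C_1$ computation inserted, your argument coincides with the paper's proof.
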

\begin{remark}
Clearly, the lemma generalizes to multiple graftings as well.
\end{remark}
\begin{proof}
Consider the Riemann surface  $\Sigma^{uG}=X$  (obtained by ungrafting), i.e., its uniformization connection is gauge equivalent to
 $\nabla^G$. Since the two complex structures on $\Sigma^{uG}$ and $\Sigma,$ viewed as two points in the Teichm\"uller space of genus $g$ surfaces, can be connected by a smooth curve, both uniformization connections (in the same connected component of real points in the de Rham moduli space)
 on $\Sigma^{uG}$ and $\Sigma$ induce the same topological spin structure. It remains to show that grafting once (from the uniformization oper of $\Sigma^{uG}$ to the
 oper $\nabla^G$ on $\Sigma$)
  changes this spin structure.
  
  Recall that we
 can compute the value of the  quadratic form $Q$ associated to  the spin structure on the class in $H^1(\Sigma, \mathbb Z_2)$  of an immersed curve $\delta \colon S^1\to\Sigma$ by considering the 
  lifting  $\widetilde\omega \,\in\,  \Gamma(\mathbb R,\,\widetilde \delta^*K_\Sigma)$ to the universal cover of $S^1$ of  the unique  section   $\omega\in\Gamma(S^1,\delta^*K_\Sigma)$ defined by 
$\omega(\delta')=1$  and then  considering  the $\mathbb Z_2$-monodromy defined by  a section 
 $\widetilde{s}\,\in\,\Gamma(\R,\,\widetilde \delta^*S)$ such that $\widetilde s^2
\,=\,\widetilde \omega$.
 
Along closed curves representing an element of $\pi_1(\Sigma)$ that do not intersect the (simple) grafting curve $\gamma$, the developing map does not change. Therefore, by \eqref{inducedspin} the quadratic form of the spin structure  specialized on  those 
curves remains the same.

Consider the closed curve $\delta_1$ in $\Sigma^{uG}$  which intersects the grafting curve $\gamma$ once.
The developing map along the corresponding closed curve $\delta$ on $\Sigma$  (which intersects the grafting curve $\gamma$ once) is  obtained from the developing map along $\delta_1$ by precomposing it with the circle $C_1$ along the Hopf cylinder $T_A$ \footnote{We implicitly assume that $C_1$ does not pass through $\infty$. If it does, we can replace $C_1$ by $C_2$ without altering the remaining arguments.}.
 With respect to the holomorphic 1-form $dz$ on $\mathbb CP^1\setminus\{\infty\}$ and for $C_1= \{a e^{i\theta}+b\; | \; \theta \in [0, 2 \pi]\}$ for appropriate $a>0,b\in\C$, the section 
 $\omega$ of the canonical bundle,  along $C_1$,   is  given
by \[\omega_{\mid \theta}=-i a^{-1}e^{-i\theta}dz.\]
Using \eqref{inducedspin} we can use the holomorphic section $\sqrt{dz}=(z,1)$ on $\CP^1\setminus\{\infty\}$ along $C_1$ and observe that
a  section $s$  of $S$, along $C_1$,   such that  $s^2=\omega$ is given by
\[s(\theta)=\sqrt{-i a^{-1}} e^{-\tfrac{i\theta}{2}} \sqrt{dz}.\]
Hence, specialized on  the closed curve $\delta$,  the  quadratic form associated to the spin structure  of  the oper $\nabla^G$ on $\Sigma$ differs from the quadratic form of  the spin structure induced by  the uniformization oper of $\Sigma^G$ by a $-1$ factor. This completes  the proof.
\end{proof}
\subsection{Grafting on the 1-punctured torus $T^2_\tau$}\label{sec:grafting1T} Fix the parabolic weight $r \in (0, \frac{1}{2})$ and consider the real subspace $ \mathcal M_{1,1}^r(\R)$ of 
$ \mathcal M_{1,1}^r$ corresponding to the real character variety.  By \cite[Theorem 3.4.1]{Gold}  each non-compact connected component of the real character variety   $\mathcal M_{1,1}^r(\R)$  is in one-to-one correspondence to hyperbolic structures on the one-punctured torus with conical angle $4 \pi r$ at the marked point (as the rotation angle satisfies $\theta=2r$).
Therefore, we refer to elements of $\mathcal M_{1,1}^r(\R)$ as conical hyperbolic structures for short.  For conical hyperbolic structures  \cite[Theorem 1.5.2]{Bu} shows that every free homotopy class of curves on the torus can be  represented by a simply closed geodesic. 

Fix a real representation $\rho \in \mathcal M_{1,1}^r(\R)$ and denote by $X,\,Y \in \SL(2, \R)$ its 
values along $\gamma_x,\, \gamma_y \,\in\, \pi_1( T^2_\tau \setminus \{0 \}, \, p_0),$ 
respectively (see (\ref{gamma_x1}) and  (\ref{gamma_x})). Then the corresponding conical hyperbolic structure constructed in 
\cite[Theorem 3.4.1]{Gold} is obtained by gluing the opposite edges of a particular 
hyperbolic quadrilateral $P_1P_2 P_3P_4.$ The point $P_4$  hereby is a  fixed point in the 
hyperbolic plane $\mathbb H^2$ with (elliptic) local monodromy given by $r$, while  $$P_3=X \cdot P_4, \quad 
P_2\,=\,Y \cdot X \cdot P_4 \quad \text{ and } \quad P_1\,=\,X^{-1} \cdot Y \cdot X \cdot P_4.$$

Consider now in the hyperbolic plane $\mathbb H^2$ the unique hyperbolic geodesic $c$ 
perpendicular to both geodesics generated by $P_1 P_2$ and $P_4 P_3$.  Due to its 
uniqueness, this geodesic is fixed by $Y$ (it is the axis of the loxodromic isometry $Y$). 
Moreover, since two distinct geodesics in $\mathbb H^2$ intersect at most once, $c$ does 
not contain any of the vertices $P _j$, for all $j=1,\,...,\,4$. Hence $c$ is a 
simply closed geodesic representing the free homotopy class of $\gamma_y$ and $c$ does not 
contain the conical point. The same argument shows the existence of a unique hyperbolic 
geodesic $\widetilde c$ perpendicular to both geodesics generated by $P_1P_4$ and $P_2P_3$ 
representing the free homotopy class of $\gamma_x$ that does not contain the conical 
point.

Therefore, grafting of  conical hyperbolic structures can be performed along both geodesics
representing  the free homotopy class of $\gamma_x$  and $\gamma_y$.

\begin{lemma}
The  conical hyperbolic structure on the torus is rectangular if and only if 
$$\Tr(YX)=\Tr(YX^{-1}).$$ 
\end{lemma}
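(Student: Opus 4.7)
The plan is to translate rectangularity into the existence of an anti-holomorphic isometric involution of the conical hyperbolic structure, then to orthogonality of the axes of $X$ and $Y$, and finally to the stated trace identity. Since the conical hyperbolic structure determines the underlying conformal structure on the punctured torus, the torus is rectangular in the sense $\tau\in\R_{>0}$ precisely when the hyperbolic structure admits an orientation-reversing isometric involution fixing the cone point $o$ and inducing on $\pi_1(T^2_\tau\setminus\{o\},p_0)$ the outer action $\gamma_x\mapsto\gamma_x$, $\gamma_y\mapsto\gamma_y^{-1}$ (the action of $\eta\colon[w]\mapsto[\overline w]$ read off on the flat model, where the assertion is transparent). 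Lifting this involution to the universal cover, the condition becomes the existence of an anti-holomorphic isometry $\phi$ of $\mathbb H^2$ with $\phi X\phi^{-1}=X$ and $\phi Y\phi^{-1}=Y^{-1}$.

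Any anti-holomorphic isometry of $\mathbb H^2$ is a reflection along some geodesic $\ell$. The commutator $[X,Y]$ is elliptic (with trace $2\cos(2\pi r)$ corresponding to the cone singularity), so $X$ and $Y$ are loxodromic with axes $\widetilde c$ and $c$ that intersect. The condition $\phi X\phi^{-1}=X$ then forces $\phi$ to fix $\widetilde c$ pointwise, so $\ell=\widetilde c$; the condition $\phi Y\phi^{-1}=Y^{-1}$ forces $\phi$ to preserve $c$ setwise while reversing its orientation, i.e.\ $\ell$ must meet $c$ perpendicularly. Hence rectangularity is equivalent to $\widetilde c\perp c$. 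Conjugating in $\mathrm{SL}(2,\R)$ so that $X=\mathrm{diag}(\sqrt\lambda,1/\sqrt\lambda)$ (axis the imaginary axis) and $Y=R_\theta\,\mathrm{diag}(\sqrt\mu,1/\sqrt\mu)\,R_{-\theta}$, where $R_\theta$ is the elliptic rotation by angle $\theta$ at $i\in\mathbb H^2$ (so $\theta$ is the angle between the axes), a direct calculation combined with the universal identity $\Tr(YX)+\Tr(YX^{-1})=\Tr(X)\Tr(Y)$ yields
\[
\Tr(YX)-\Tr(YX^{-1}) \,=\, \bigl(\sqrt\lambda-1/\sqrt\lambda\bigr)\bigl(\sqrt\mu-1/\sqrt\mu\bigr)\cos\theta.
\]
Since $X$ and $Y$ are loxodromic the prefactor is nonzero, so $\cos\theta=0$ is equivalent to $\Tr(YX)=\Tr(YX^{-1})$, completing the argument.

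The main obstacle is the first step: making the identification between rectangularity of the conformal structure and the existence of the anti-holomorphic involution $\phi$ with the prescribed conjugation identities fully precise, given that $\eta_*$ is only well-defined on $\pi_1$ as an outer automorphism and that $\eta$ does not fix the chosen basepoint $p_0$. Once this is granted, the remaining geometric and algebraic steps are routine hyperbolic trigonometry. An alternative shortcut is to bypass the geometric argument by invoking Lemma \ref{tausymcon2} directly: rectangularity is equivalent to $\eta$-invariance of the associated flat connection, and for a real representation the lemma then forces $z_1=z_2$, which is exactly $\Tr(YX)=\Tr(YX^{-1})$.
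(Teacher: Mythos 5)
Your argument is correct and shares the paper's overall strategy---both reduce rectangularity of the conical hyperbolic structure to the existence of an orientation-reversing isometric involution---but you implement the decisive step differently. The paper argues synthetically with the fundamental quadrilateral $P_1P_2P_3P_4$: the trace condition forces the reflection across the axis $c$ of $Y$ to carry the edge $P_2P_3$ to the edge $P_1P_4$, producing a real symmetry whose fixed-point set has two components, hence a rectangular (and not rhombic) torus. You instead reduce the existence of that involution to perpendicularity of the axes of $X$ and $Y$ and verify the equivalence with the trace identity by an explicit $\mathrm{SL}(2,\R)$ normal form, obtaining $\Tr(YX)-\Tr(YX^{-1})=\bigl(\sqrt\lambda-1/\sqrt\lambda\bigr)\bigl(\sqrt\mu-1/\sqrt\mu\bigr)\cos\theta$ with nonvanishing prefactor; this makes the ``if and only if'' completely explicit and treats both implications symmetrically, whereas the paper's proof as written only spells out one direction. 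The point you flag as the main obstacle---that an anticonformal involution acting on $H_1$ by $\gamma_x\mapsto\gamma_x$, $\gamma_y\mapsto-\gamma_y$ exists precisely for rectangular lattices---is genuine content but a routine computation (it amounts to $\omega_2/\omega_1\in i\R$), so there is no gap; the intersection of the axes, which your normal form presupposes, is likewise guaranteed since the commutator is elliptic. Your proposed shortcut via Lemma \ref{tausymcon2} should be avoided, though: that lemma concerns connections on a torus already assumed rectangular, so invoking it to characterize when the hyperbolic structure is rectangular is essentially circular, and the geometric argument you give is the right one.
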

\begin{proof}
The trace condition $\Tr(YX)=\Tr(YX^{-1})$ implies that the reflexion across the unique geodesic $c$, which sends the edge $P_2 P_3$ to  the edge $P_1P_4,$ induces a real symmetry of  the conical hyperbolic structure. The fix point set consists of  two disjoint circles -- the geodesic $c$ and the edge $P_2P_3$ (which is identified with the edge $P_1P_4$).  Hence the corresponding torus is rectangular (and not rhombic) and the reflexion across $c$  coincides with the reflexion  across the edge $P_1P_4$ (this reflexion corresponds to the involution $i\eta$ described in Section \ref{sect:abelianization}).
\end{proof}

Orbifold grafting is well-defined on the one-punctured torus and the grafted Riemann surface remains rectangular:

\begin{lemma}\label{tg}
Grafting the $1$-punctured  rectangular torus $T^2_\tau = \C /\Gamma$, $\Gamma = \Z \oplus i \tau \Z,$ along $\gamma_x$ or  $\gamma_y$, respectively, yields another $1$-punctured and rectangular torus $T^2_{\tau^G}$ with a different real projective structure. 

\end{lemma}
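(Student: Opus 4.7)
The plan is to check four things: the grafted surface is still a $1$-punctured torus, it inherits a real projective structure, its underlying complex torus is rectangular, and the projective structure really has changed. Fix grafting along the $\gamma_y$-representative $c$ (the case of $\gamma_x$ is identical by symmetry). By the discussion following Lemma~\ref{tg} (one level above), $c$ is the axis of the loxodromic element $Y\in\SL(2,\R)$ and is pointwise fixed by the reflexion $\sigma$ giving rectangularity. The first point is immediate: grafting cuts along the simple closed curve $c$ and inserts the Hopf annulus $|T_Y|$, which is a topological $S^1\times[0,1]$; gluing in such an annulus does not change the topological type, so the result is again a $1$-punctured torus, which we equip with the induced complex structure to obtain $T^2_{\tau^G}$. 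The second point is built into the grafting construction recalled in Section~\ref{grafting}: the monodromy homomorphism is unchanged, so since $\rho$ took values in $\SL(2,\R)$ so does the monodromy of the grafted structure, and the projective structure is real.

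For rectangularity, the key observation is that the reflexion $\sigma$ produced in the preceding lemma extends to the grafted surface. Indeed, $\sigma$ fixes the grafting locus $c$ pointwise. The Hopf annulus $|T_Y|$ carries its own orientation-reversing involution $\tilde\sigma$, namely the reflexion across the circle $S_1\subset\CP^1$ that contains the developed image of $c$; this $\tilde\sigma$ commutes with $Y$ and restricts on each boundary component of $|T_Y|$ to the reflexion of $c$ induced by $\sigma$. Gluing $\sigma$ on $\Sigma\setminus c$ with $\tilde\sigma$ on $|T_Y|$ along the matching identifications produces an antiholomorphic involution $\sigma^G$ of $T^2_{\tau^G}\setminus\{o\}$ that extends across the puncture. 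Its fixed locus is the disjoint union of the core circle of the inserted annulus (the projection of the axis $S_1$) and the image of the edge $P_2P_3=P_1P_4$, i.e.\ two disjoint simple closed curves. A torus carrying an antiholomorphic involution whose fixed set is a pair of disjoint circles is necessarily rectangular (the ratio of periods is purely imaginary), proving that $T^2_{\tau^G}$ is rectangular.

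Finally, the projective structure on $T^2_{\tau^G}$ differs from the one on $T^2_\tau$: the class of $c$ is non-null-homotopic, so by the mechanism of Lemma~\ref{fundamentallemma} (which applies equally in the $1$-punctured setting because the argument is local around the grafting curve) the induced spin structure is flipped, whereas the developing map of the ungrafted structure and its grafted counterpart cannot be conjugate by an isotopy relative to the puncture. In particular the resulting conformal modulus satisfies $\tau^G\neq\tau$.

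The main obstacle is the extension argument in the rectangularity step: one must verify that the reflexion $\tilde\sigma$ on the Hopf annulus matches $\sigma$ on both boundary circles with the correct orientation. This requires tracking how the developing map identifies the two copies of $c$ bounding $|T_Y|$ via the loxodromic element $Y$, and using that $\sigma$ commutes with $Y$ (because $Y$ preserves $c$ and hence commutes with any reflexion fixing $c$ pointwise). Once this compatibility is established, the rest of the proof is essentially bookkeeping.
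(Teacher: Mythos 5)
Your argument reaches the right conclusions by a genuinely different route for the rectangularity step, and that part is essentially sound: instead of identifying the conformal type of the inserted piece, you extend the anti-holomorphic involution over the graft and invoke the classification of real structures on elliptic curves (two fixed ovals $\Rightarrow$ rectangular). Two remarks on that step. First, the compatibility you flag as ``the main obstacle'' is exactly where the content sits: after cutting along $c$ the reflexion swaps the two boundary copies of $c$ rather than fixing them, and likewise the inversion in $S_1$ swaps the two boundary circles of $|T_Y|$, so the fixed locus of the glued involution is the edge circle together with the projection of the arc of $S_1$ \emph{complementary} to the axis (the part outside $\mathbb H^2$) --- not ``the projection of the axis'', which becomes the two (unfixed) gluing circles. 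With that correction the count is two ovals, as needed. The paper's own proof is more economical: since $Y\in\SL(2,\R)$ is hyperbolic, the Hopf annulus $|T_Y|$ is conformally a flat Euclidean rectangle, so grafting literally inserts a rectangle into the rectangular fundamental domain along the edge $\gamma_y$ (resp.\ $\gamma_x$). This gives rectangularity in one line and, more importantly, the \emph{monotonicity} $\tau^G<\tau$ (resp.\ $\tau^G>\tau$), which is what the proof of Lemma \ref{teichho} actually consumes. Your involution argument, while valid, does not recover this quantitative information.

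The one genuine logical gap is the final sentence: from ``the grafted projective structure is not isotopic to the original one'' you conclude ``in particular $\tau^G\neq\tau$.'' This does not follow --- a fixed Riemann surface carries many mutually non-equivalent (real) projective structures, so distinctness of the projective structures says nothing about the underlying conformal moduli. The change of spin structure (Lemma \ref{fundamentallemma} / Lemma \ref{fundamentallemmatorus}) does legitimately show the projective structure is \emph{different}, which is all the lemma literally asserts; but if you want $\tau^G\neq\tau$ (as the notation $T^2_{\tau^G}$ and the later use of $\tau\mapsto\tau^G$ as a homeomorphism of $\R_{>0}$ require), you must argue it directly, e.g.\ via the rectangle-insertion computation $\tau^G=\bigl(\tfrac{1}{\tau}+\tfrac{1}{\tau_Y}\bigr)^{-1}$.
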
 

\begin{proof}
Let $Y \in \SL(2, \R)$ denotes the monodromy along $\gamma_y$. Since $y= \Tr(Y)$ satisfies  $|y|>2$,  we get that $Y$ is a hyperbolic element in $\SL(2, \R).$
Therefore, grafting glues the annulus $|T_Y|$ to $T^2_\tau$ along $\gamma_y$. This adds a rectangle to the fundamental domain of the torus  $T^2_\tau$  along the edge $\gamma_y$. Therefore, the resulting torus remains rectangular and $\tau$ decreases. 
With the same argument grafting along $\gamma_x$ add a rectangle along the edge $\gamma_y$ and the conformal type of $\tau$ increases.
\end{proof}

On the grafted punctured torus we fix (without loss of generality) the real involution, which we again denote by $\eta,$  whose fix point set lies in the homology class $2[\gamma_x]$ and contains the singular point. Let \[\mathcal M_{\R}^r =\mathcal M_{1,1}^{r,\eta}\cap\mathcal M_{1,1}^r(\R)\subset \mathcal M_{1,1}^r\]  be the subset of  $\eta$-symmetric  $\text{SL}(2, \R)$-representations on  the one-punctured torus with local monodromy determined by the parabolic weight $r$. From \eqref{character-equation} and  Lemma \ref{tausymcon2} (see also Remark \ref{41d} and Figure \ref{gammaz}) the space $\mathcal M_{\R}^r$ has four connected components, each of them is a non-compact manifold of real dimension one.

\begin{lemma}\label{Lemma:fullcomponent}
Let $\rho \in \mathcal M_{\R}^r$.  Then there is a unique $\tau\in\R^{>0}$  such that the uniformization representation $\rho^U(\tau)$ of the 1-punctured torus  $T^2_\tau$
satisfies $\rho=\rho^U(\tau)$.
\end{lemma}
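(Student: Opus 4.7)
The plan is to show that the uniformization map
\[
u\,\colon\,\R_{>0}\,\longrightarrow\, \mathcal M_{\R}^r,\quad \tau\,\longmapsto\, \rho^U(\tau)
\]
is a homeomorphism onto a connected component of $\mathcal M_{\R}^r$, and then to invoke the sign-change automorphisms (which permute the four components of $\mathcal M_{\R}^r$ transitively) to conclude that every $\rho\in\mathcal M_{\R}^r$ is of the form $\rho^U(\tau)$ for a unique $\tau\in\R_{>0}$ (once the sign lift of the $\mathrm{PSL}(2,\R)$-representation has been fixed by the component where $\rho$ lies).

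First, I would check that $u$ really lands in $\mathcal M_{\R}^r$. Working with the hyperbolic quadrilateral $P_1P_2P_3P_4$ from Goldman's construction recalled in Section \ref{sec:grafting1T}, the rectangularity of $T^2_\tau$ forces the reflection fixing the axis of $Y$ to descend to an isometric involution of the conical hyperbolic structure; by the lemma just before Lemma \ref{tg} this is equivalent to $\Tr(YX)=\Tr(YX^{-1})$, and Lemma \ref{tausymcon2} then gives $\rho^U(\tau)\in\mathcal M_{1,1}^{r,\eta}\cap\mathcal M_{1,1}^r(\R)=\mathcal M_{\R}^r$. Next I would establish continuity of $u$ (smooth dependence of the hyperbolic quadrilateral on the modulus $\tau$) and injectivity (two conformal structures that share the same uniformization monodromy would be identified by their common developing map, hence $\tau_1=\tau_2$).

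The technical heart of the argument is \emph{properness}: I would show that the trace coordinates $(x,y,z)=(\Tr X,\Tr Y,\Tr YX)$ of $\rho^U(\tau)$ diverge as $\tau\to 0$ or $\tau\to\infty$. Concretely, along the rectangular locus the hyperbolic lengths of the geodesic representatives of $\gamma_x$ and $\gamma_y$ are monotone functions of $\tau$ (Lemma \ref{tg} already shows that simple-grafting deforms $\tau$ monotonically in one direction, which is the analogous phenomenon). Hence one of $|x|,|y|$ tends to infinity at each end of the interval $\R_{>0}$, and $u$ is proper.

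With continuity, injectivity and properness in hand, invariance of domain implies that $u(\R_{>0})$ is both open and closed in the connected component of $\mathcal M_{\R}^r$ containing its image, hence equals that component. Since this component is diffeomorphic to $\R$ (as noted in Remark \ref{41d} and Figure \ref{gammaz}) and $u$ is an injective proper continuous map from $\R_{>0}$ into it, $u$ is a homeomorphism onto the component. Finally, applying the appropriate sign-change automorphism (tensoring with a flat $\Z_2$-bundle) to any given $\rho\in\mathcal M_{\R}^r$ brings it into the distinguished component, producing the unique $\tau$. I expect the properness step to be the main obstacle: the monotonicity of the length of the geodesic representatives of $\gamma_x$ and $\gamma_y$ as functions of $\tau$, in the presence of the conical singularity, needs a careful argument, most likely by an explicit computation in Fenchel–Nielsen coordinates adapted to the rectangular quadrilateral of Section \ref{sec:grafting1T}.
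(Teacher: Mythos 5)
Your route is genuinely different from the paper's. The paper's proof is a two-line appeal to Goldman's Theorem 3.4.1 (cited as Theorem \ref{goldman} / \cite[Theorem 3.4.1]{Gold}): that theorem already identifies \emph{each} non-compact component of $\mathcal M_{1,1}^r(\R)$ bijectively with the space of conical hyperbolic structures of cone angle $4\pi r$, hence (by orbifold uniformization) with the Teichm\"uller space of conformal structures on the once-punctured torus; the paper then only has to observe that the $\eta$-symmetric locus $\mathcal M_\R^r$ corresponds under this bijection to the rectangular structures, i.e.\ to $\tau\in\R_{>0}$. Surjectivity of $\tau\mapsto\rho^U(\tau)$ onto a component of $\mathcal M_\R^r$ is therefore free, and no degree or properness argument is needed. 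You instead re-derive surjectivity by hand via continuity, injectivity, properness and invariance of domain restricted to the one-dimensional rectangular locus. That is a legitimate alternative, and it has the virtue of being more self-contained on the surjectivity side; but note that your injectivity step (``identified by their common developing map'') is already the injectivity half of Goldman's theorem, so you are not actually avoiding the citation --- you are only declining to use its surjectivity half, which is the half that would have saved you the hard work.

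The genuine gap is the properness step, which you flag but do not prove. The analogy you offer --- that Lemma \ref{tg} shows simple-grafting moves $\tau$ monotonically --- is not a substitute: grafting is a discrete surgery on projective structures, not a statement about how the geodesic lengths of $\gamma_x,\gamma_y$ in the conical hyperbolic metric vary along the ray $\tau\in\R_{>0}$. What is actually needed is that as $\tau\to 0$ (resp.\ $\tau\to\infty$) the conformal structure pinches $\gamma_y$ (resp.\ $\gamma_x$), so that the hyperbolic length of the corresponding geodesic tends to $0$ and hence $|y|=2\cosh(\ell_y/2)\to 2$ while, by the constraint \eqref{extraaffineeq} on $\mathcal M_\R^r$, $|x|\to\infty$ (and symmetrically at the other end). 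This forces $u(\tau)$ to exit every compact subset of the component, whose ends are exactly $(x,y)\to(\infty,2)$ and $(2,\infty)$. That pinching statement is standard (extremal length controls hyperbolic length, also in the presence of a cone point of angle $<2\pi$), but it is an analytic input of the same order of difficulty as simply quoting Goldman's parametrization, so as written your proof is incomplete at precisely the point where all the content sits. A smaller remark: your appeal to Lemma \ref{tausymcon2} runs in the wrong direction --- that lemma deduces trace identities from $\eta$-invariance of a connection; the clean way to see $\rho^U(\tau)\in\mathcal M_{1,1}^{r,\eta}$ is uniqueness of the conical hyperbolic metric in its conformal class, which forces $\eta$ to be an isometry and hence the holonomy to be $\eta$-invariant.
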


\begin{proof}
The  proof of \cite[Theorem 3.4.1]{Gold} which identifies each connected component of $\mathcal M_{1,1}^r(\R)$ with conical hyperbolic structures also implies that the one-dimensional submanifold of $\eta$-symmetric representations $\mathcal M_\R^r$ is given by those hyperbolic structures compatible with the rectangular conformal structure of $T^2_{\tau}$, i.e., $\tau\in\R^{>0}$. 
\end{proof}

The following Lemma is analogous to the main result of \cite{Tan}.
\begin{lemma}\label{teichho}
For each $r\in(0,\tfrac{1}{2})$,
simple-grafting (along $\gamma_x$ or $\gamma_y$) induces a homeomorphism of the Teichm\"uller space $\R_{>0}$ of 1-punctured rectangular tori to itself.
\end{lemma}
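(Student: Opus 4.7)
The plan is to adapt the proof of \cite{Tan} (where grafting is shown to be a homeomorphism on Teichm\"uller space in the closed-surface case) to our one-punctured rectangular setting, restricted to conical angle $4\pi r$. It suffices to treat the $\gamma_y$-grafting case, the $\gamma_x$-grafting case being strictly analogous after the involution $\tau \leftrightarrow 1/\tau$ of $\R_{>0}$. Let $G\colon \R_{>0}\to \R_{>0}$ be the map sending $\tau$ to the modulus $G(\tau)$ of the rectangular 1-punctured torus underlying the $\gamma_y$-grafting of $T^2_\tau$, which is well-defined by Lemma \ref{tg}.

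First I would establish continuity of $G$. The uniformization representation $\rho^U(\tau)\in \mathcal M_\R^r$ depends real-analytically on $\tau$ by Lemma \ref{Lemma:fullcomponent} (and the proof of \cite[Theorem 3.4.1]{Gold}), and so do the axis of the hyperbolic element $Y=\rho^U(\tau)(\gamma_y)$, the Hopf cylinder $|T_Y|$, and the gluing along $\gamma_y$; hence $G$ is continuous.

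Second, I would establish injectivity via the ungrafting procedure. The grafted projective structure on $T^2_{G(\tau)}$ admits a canonical decomposition into a conical hyperbolic part (isometric to $T^2_\tau$ cut along the grafted geodesic) and a projective annulus $\mathbb A\simeq |T_Y|$; the annulus is characterised either as the $\eta$-invariant locus where the developing map folds back on itself or, equivalently, as the region distinguishing the grafted spin structure from the uniformization spin structure of $T^2_{G(\tau)}$ by Lemma \ref{fundamentallemma}. Collapsing $\mathbb A$ to its core geodesic recovers $T^2_\tau$ uniquely, so the ungrafting defines a continuous left-inverse of $G$ on its image. Since both source and target are connected $1$-manifolds, the existence of such a left-inverse forces $G$ to be strictly monotone, hence injective.

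Third, surjectivity. From Lemma \ref{tg} we have $G(\tau)<\tau$, so $G(\tau)\to 0$ as $\tau\to 0$. To show $G(\tau)\to\infty$ as $\tau\to\infty$, one analyses the degenerating conical hyperbolic structure: as $\tau\to\infty$, the torus pinches along the short $[\gamma_x]$-geodesic, the translation length of $Y$ grows sublinearly in $\tau$, and consequently the horizontal width $w(\tau)$ added to the rectangular fundamental domain (so that $G(\tau)=\tau/(1+w(\tau))$) satisfies $w(\tau)=o(\tau)$, giving $G(\tau)\to\infty$. The intermediate value theorem then yields surjectivity, and combined with Steps 1 and 2 shows $G$ is a continuous bijection, hence a homeomorphism. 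The main obstacle is the precise asymptotic bound on $w(\tau)$ in Step 3, which requires understanding the simultaneous degeneration of the conical hyperbolic geometry and the associated Hopf cylinder modulus; an alternative route, closer in spirit to \cite{Tan}, is to directly construct the ungrafting on the full space of $\eta$-symmetric real-monodromy projective structures and verify its image covers $\R_{>0}$, bypassing the asymptotic analysis altogether.
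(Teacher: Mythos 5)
Your overall skeleton (continuity, injectivity, surjectivity via the intermediate value theorem using the boundary behaviour $G(\tau)\to 0$ and $G(\tau)\to\infty$) is the same as the paper's, and your surjectivity step is essentially the paper's argument: writing $G(\tau)=\tau/(1+w(\tau))$ with $w(\tau)=\tau/\tau_Y$, where $\tau_Y$ is the modulus of the Hopf cylinder $|T_Y|$, the paper records the identity $\tau^G=(\tfrac{1}{\tau}+\tfrac{1}{\tau_Y})^{-1}$ and observes that $\tau_Y\to\infty$ as $\tau\to\infty$; this is exactly your condition $w(\tau)=o(\tau)$. (Your aside that the translation length of $Y$ grows sublinearly in $\tau$ is neither needed nor accurate --- all that is required is $\ell(Y)\to\infty$, equivalently $\tau_Y\to\infty$, so the "main obstacle" you flag is lighter than you fear.)

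The genuine gap is in your injectivity step. The ungrafting you describe is a left inverse of the map $\tau\mapsto(\text{grafted projective structure on }T^2_{G(\tau)})$, not of the map $G\colon\tau\mapsto G(\tau)$ into the Teichm\"uller space $\R_{>0}$. A point of the target is a bare conformal structure; to ``collapse the annulus $\mathbb A$'' you must first locate $\mathbb A$, and both of your characterisations of it (the folding locus of the developing map, or the spin-structure discrepancy of Lemma \ref{fundamentallemma}) use the projective structure, which is not part of the data of $G(\tau)$. If $G(\tau_1)=G(\tau_2)$, the two grafted tori are conformally equivalent but may a priori carry different projective structures; ungrafting each simply returns $\tau_1$ and $\tau_2$, with no contradiction. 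Hence your ``continuous left-inverse of $G$ on its image'' is not well defined without already knowing injectivity, and the subsequent monotonicity conclusion does not follow. The paper closes this point directly: $\tau\mapsto\tau_Y$ is strictly increasing, hence so is $\tau\mapsto(\tfrac{1}{\tau}+\tfrac{1}{\tau_Y})^{-1}=\tau^G$. You could repair your argument in the same way from your own formula $G(\tau)=\tau/(1+w(\tau))$, but you would still need the monotonicity of $\tau_Y$ (equivalently, of $w(\tau)/\tau$) as an input rather than the ungrafting construction.
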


\begin{proof}
Without loss of generality we only consider grafting along $\gamma_y.$
The map $\tau \mapsto \tau^G$, see Lemma \ref{tg}, is smooth and moreover, $\tau^G\,<
\,\tau$. Therefore, $\tau^G \rightarrow 0$ for $\tau \rightarrow 0.$
For  surjectivity we use  the intermediate value theorem:  it remains to show $\tau^G \rightarrow \infty$ for $\tau \rightarrow \infty$. 
This follows by observing that the conformal type $\tau_Y$ of the Hopf torus $T_Y$
satisfies $\tau_Y \to\infty$ 
when  $\tau\to\infty$ and  $\tau^G = (\tfrac{1}{\tau} + \tfrac{1}{\tau_Y})^{-1}$. 
Injectivity follows from the fact that the conformal type of the Hopf tori $T_Y$ and $T_{\widetilde Y}$ satisfies $\tau_{Y} >\tau_{\widetilde Y}$ for $\tau>\widetilde \tau$.
 \end{proof}

\subsection{Spin structures and projective structures on the 1-punctured torus}
If the underlying holomorphic line bundle $L$ of a logarithmic SL$(2, \C)$-connection on the 1-punctured torus in  \eqref{abel-connection}
is not a spin bundle, then the corresponding $\gamma_\chi^\pm$ must have a zero. Thus an oper $\nabla$ on a 1-punctured torus gives rise to a spin bundle on the compact torus through the special form of the connection 
in \eqref{abel-connection-spindbar} and \eqref{abel-connection-spinpartial}. 

Moreover, the corresponding quadratic form (see Section \ref{Section Spin}) $Q$ on 
$H_1(T^2_\tau\setminus\{o\},\, \Z_2)$ is well defined on $ H_1(T^2_\tau,\, \Z_2)$, as the 
local conjugacy class around the puncture is trivial in $\Z_2$, see also \cite{John,Bob}. 
Hence, using the same arguments as in the compact case, an suitably adjusted version of 
Lemma \ref{fundamentallemma} holds on the one-punctured torus, i.e., grafting changes the 
induced spin structure.

The grafted connection $\nabla^G$ has by construction  SL$(2, \R)$-monodromy lying  in the same connected component as the orbifold uniformization connection $\nabla^U$ of $T^2_\tau.$ Moreover, the Hitchin section based at $T^2_\tau$ maps $q=0$  to $\nabla^U$ and there exist a $q_G \in \R_*$  with $\nabla^{q_G} = \nabla^G$ by $\eta$-invariance.

Recall that we have chosen the spin structure  of $\nabla^U$ to be trivial, i.e., it is represented by  $\chi(\nabla^U)=0 \in$ Jac$(T^2_\tau).$
For the grafted connection $\nabla^G$ it turns out, using abelianization,   that the underlying line bundle $L$ must be a particular spin bundle, i.e., $\chi(\nabla^G)$ is a half lattice point of Jac$(T^2_\tau)$.

\begin{lemma}\label{fundamentallemmatorus}
Via abelianization
 the spin structure of simple-grafting along $\gamma_y$ is given by
 $\chi(\nabla^{G,y})=  i\tfrac{\pi}{2}$ (up to lattice points and sign),
 and  the spin structure of simple-grafting along $\gamma_x$ is given by
 $\chi(\nabla^{G,x})=  \tfrac{\pi}{2\tau}$ (up to lattice points and sign),
\end{lemma}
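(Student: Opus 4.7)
The plan is to combine two ingredients: the intersection-form description of how grafting changes the topological spin structure (the 1-punctured-torus analog of Lemma \ref{fundamentallemma}, stated in the paragraph preceding the lemma), and an explicit $\mathbb Z_2$-holonomy computation that identifies each half-lattice point of $\mathrm{Jac}(T^2_\tau)$, expressed in the $\chi$-coordinate of the abelianization, with a specific pattern of monodromies along the cycles $\gamma_x,\gamma_y$.

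First I would recall that the orbifold uniformization oper $\nabla^U$ of $T^2_\tau$ has trivial underlying spin bundle, i.e.\ $\chi(\nabla^U)=0$ modulo the lattice $L_\chi=\tfrac{\pi}{\tau}\mathbb Z\oplus i\pi\mathbb Z$ read off implicitly from Lemma \ref{Lemma:4-spin}; its four half-lattice representatives are therefore $0,\ \pi/(2\tau),\ i\pi/2,\ \pi/(2\tau)+i\pi/2$. By the punctured-torus version of Lemma \ref{fundamentallemma} noted before the statement, grafting along a simple closed geodesic $\gamma$ modifies the quadratic form $Q$ on $H_1(T^2_\tau,\mathbb Z_2)$ by the linear form $\alpha\mapsto\langle\alpha,[\gamma]\rangle\bmod 2$. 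Since on the torus $\langle[\gamma_x],[\gamma_y]\rangle\equiv 1\bmod 2$ and both self-intersections vanish, grafting along $\gamma_y$ produces the spin structure whose $\mathbb Z_2$-monodromies along $(\gamma_x,\gamma_y)$ are $(-1,+1)$, while grafting along $\gamma_x$ produces the reverse pattern $(+1,-1)$.

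Next I would translate these $\mathbb Z_2$-holonomy data into the $\chi$-coordinate by a direct integration. A half-lattice point $\chi$ determines the flat unitary line bundle whose connection, compatible with the Dolbeault operator $\overline{\partial}^0+\chi\,d\bar w$ of \eqref{nablaL}, is gauge equivalent to $\chi\,d\bar w-\bar\chi\,dw$. Integrating along the parameterizations \eqref{gamma_x1} and \eqref{gamma_x} gives holonomies
\[
\exp\!\left(\int_{\gamma_x}(\chi\,d\bar w-\bar\chi\,dw)\right)=\exp(2i\,\mathrm{Im}\,\chi),\qquad
\exp\!\left(\int_{\gamma_y}(\chi\,d\bar w-\bar\chi\,dw)\right)=\exp(-2i\tau\,\mathrm{Re}\,\chi).
\]
Specializing yields $(-1,+1)$ for $\chi=i\pi/2$ and $(+1,-1)$ for $\chi=\pi/(2\tau)$. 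Comparing with the patterns from the intersection-form argument identifies these two half-lattice points with the spin bundles of $\nabla^{G,y}$ and $\nabla^{G,x}$, respectively, up to the $L\leftrightarrow L^*$ ambiguity that accounts for the stated indeterminacy by a sign.

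The main obstacle is the bookkeeping of the normalizations so that everything lines up: the lattice $L_\chi$ of the Jacobian in the abelianization coordinate, the correct companion $\partial$-part of \eqref{nablaL} producing a unitary connection, and the chosen base points and orientations of $\gamma_x,\gamma_y$ must be consistent with the conventions fixed in Sections \ref{sect:abelianization} and \ref{sec:grafting1T}. The adaptation of Lemma \ref{fundamentallemma} to the 1-punctured torus is itself unproblematic, because the local monodromy around the puncture is trivial in $\mathbb Z_2$, so that the quadratic form $Q$ descends to $H_1(T^2_\tau,\mathbb Z_2)$ and the developing-map argument of the compact case goes through verbatim along the grafted cycle.
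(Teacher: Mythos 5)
Your proposal is correct and follows essentially the same route as the paper: you determine the $\mathbb Z_2$-monodromy change induced by grafting from the intersection-form description of Lemma \ref{fundamentallemma}, and you identify the half-lattice points $i\tfrac{\pi}{2}$ and $\tfrac{\pi}{2\tau}$ by computing the holonomies of the flat unitary connection $d+\chi\,d\overline{w}-\overline{\chi}\,dw$ along $\gamma_x$ and $\gamma_y$, exactly as the paper does (your integrals reproduce its monodromy patterns $(-1,+1)$ and $(+1,-1)$). The only difference is presentational: you make the lattice normalization and the integration explicit, which the paper leaves implicit.
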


\begin{proof}
The unitary line bundle connection on the induced spin bundle for $\chi(\nabla^{G,y})$ is
$$d+ i\tfrac{\pi}{2} d\overline{w}+i\tfrac{\pi}{2} d w$$
which has monodromy $-1$ along $\gamma_x$ and monodromy $1$ along $\gamma_y$. For $\chi(\nabla^{G,x})$ it is
$$d+ \tfrac{\pi}{2\tau} d\overline{w}-\tfrac{\pi}{2\tau} d w$$
 and 
has monodromy $-1$ along $\gamma_y$ and monodromy $1$ along $\gamma_x$.
Thus, the statement follows by the same arguments as Lemma \ref{fundamentallemma}, since after grafting along 
$\gamma_y$ the spin structure change by multiplication with $1$  along $\gamma_y$ (meaning unchanged)  and  by multiplication with $-1$ along $\gamma_x$. It is the other way around  when  grafting along $\gamma_x.$
\end{proof}

\section{Holomorphic connections with $\SL(2, \R)$-monodromy}\label{holo}

Consider the natural  2-fold covering map $\pi\colon Jac(T^2_\tau)\to\CP^1$ branched at the four spin bundles.
The real subspace provided by Lemma \ref{tausymcon} is  mapped to  $\mathbb R\subset \C $. In fact, as the corresponding elliptic curve $T^2_\tau$ being rectangular,  its associated  $\wp$-function maps the   four  spin bundles   in the Jacobian (which identifies with the half periods and the critical points of $\wp$)  to the real axis.

\begin{lemma}\label{onto}
The map $h \colon  \R \rightarrow  \R \subset  \CP^1=\pi(\text{Jac}(T^2_\tau))$ given by $q \mapsto\pi( \chi(\nabla^{q}))$ is continuous. Moreover, there
exists $q_H \in \R\setminus\{0\}$ satisfying either
$\chi (\nabla^{q_H})=\frac{\pi}{4\tau}$ or $\chi (\nabla^{q_H}) = i\frac{\pi}{4}$ (up to sign and adding lattice points). 
\end{lemma}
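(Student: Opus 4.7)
The plan is to recognize $h$ as a continuous path in $\R \subset \CP^1$ which connects two distinct branch points of $\pi$, and then to locate a $q_H$ by the intermediate value theorem.

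\textbf{Continuity of $h$.} The Hitchin section $q \mapsto \nabla^q$ is smooth, so $\nabla^q$ depends continuously on $q \in \R$. Via the abelianization of Section \ref{sect:abelianization}, the holomorphic line bundle $L$ underlying $\nabla^q$ determines a class $\chi(\nabla^q) \in \operatorname{Jac}(T^2_\tau)$, defined only up to the involution $L \leftrightarrow L^*$; at the non-generic values of $q$ (where $\chi$ is a half-lattice point) the oper degeneration described in \eqref{abel-connection-spindbar}--\eqref{abel-connection-spinpartial} causes $\chi$ itself to jump between the two sheets $L$ and $L^*$. Composing with the 2-fold cover $\pi$, which is precisely the quotient by this sign involution and branched at the four spin bundles, removes the ambiguity. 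Thus $h = \pi \circ \chi \circ (\text{Hitchin section})$ is continuous on all of $\R$. Since $\nabla^q$ is $\eta$-invariant for $q \in \R$ by Lemma \ref{lemma:real Hitchin}, Lemma \ref{tausymcon} places $\chi(\nabla^q)$ in the $\eta$-invariant real locus of $\operatorname{Jac}(T^2_\tau)$, which maps under $\pi$ into $\R \subset \CP^1$.

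\textbf{Endpoints.} At $q = 0$ the Hitchin--Kobayashi correspondence produces the orbifold uniformization connection $\nabla^U$, whose underlying bundle is the non-trivial extension of the trivial spin bundle by itself; hence $\chi(\nabla^0) = 0$, one branch point of $\pi$. For grafting along $\gamma_y$ (respectively $\gamma_x$), the paragraph preceding the lemma provides $q_G \in \R_*$ with $\nabla^{q_G} = \nabla^G$, and Lemma \ref{fundamentallemmatorus} gives $\chi(\nabla^{q_G}) = i\tfrac{\pi}{2}$ (respectively $\tfrac{\pi}{2\tau}$) modulo sign and lattice, another branch point of $\pi$ distinct from $\pi(0)$.

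\textbf{Intermediate value theorem.} Restrict $h$ to the closed real interval $I$ with endpoints $0$ and $q_G$. Then $h(\partial I)$ consists of two distinct points in $\R \subset \CP^1$. The imaginary (respectively real) axis of $\operatorname{Jac}(T^2_\tau)$ is mapped by $\pi$ homeomorphically onto a real arc in $\CP^1$, and on that arc the quarter-lattice point $i\tfrac{\pi}{4}$ (respectively $\tfrac{\pi}{4\tau}$) sits strictly between $0$ and $i\tfrac{\pi}{2}$ (respectively $0$ and $\tfrac{\pi}{2\tau}$). Hence $\pi(i\tfrac{\pi}{4})$ (respectively $\pi(\tfrac{\pi}{4\tau})$) is an interior point of the real interval bounded by $h(0)$ and $h(q_G)$. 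The intermediate value theorem therefore yields $q_H$ in the interior of $I$, so $q_H \neq 0$, with $h(q_H)$ equal to the desired projected 4-spin bundle; unwinding the projection, $\chi(\nabla^{q_H}) = \pm\tfrac{\pi}{4\tau}$ or $\pm i\tfrac{\pi}{4}$ modulo lattice.

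\textbf{Main obstacle.} The delicate step is the continuity of $h$ across the oper locus: in the generic case $\chi$ records the holomorphic type of a line subbundle $L$, but at non-generic values the off-diagonal sections $\gamma^{\pm}_{\chi}$ degenerate and the abelianization data jumps between $L$ and $L^{*}$. The projection $\pi$ is designed precisely to glue those two sheets; a careful verification amounts to showing that the spectral-curve construction underlying abelianization depends continuously on the connection in a one-parameter family through an oper. The positivity/monotonicity on the real axis of $\operatorname{Jac}(T^2_\tau)$ of the projection $\pi$ (coming from the $\wp$-function of the rectangular torus) is what ensures the target value lies strictly between the endpoint values and hence is attained.
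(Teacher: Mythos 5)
Your continuity argument and your identification of the endpoints $h(0)=\pi(0)$ and $h(q_G)=\pi(i\tfrac{\pi}{2})$ (resp.\ $\pi(\tfrac{\pi}{2\tau})$) match the paper's proof. The gap is in the intermediate value step. The target of $h$ is $\R P^1\subset\CP^1$ (the paper is explicit that $z=\infty$ is included: $\pi$ maps the whole $\eta$-invariant grid of ${\rm Jac}(T^2_\tau)$ onto $\R P^1$, and the four branch points divide this \emph{circle} into four arcs). Two points on a circle do not bound a unique ``real interval'': they bound two complementary arcs. Connectedness of $h([0,q_G])$ together with the fact that it contains $\pi(0)$ and $\pi(i\tfrac{\pi}{2})$ only tells you that the image contains \emph{one} of the two arcs — either $\pi([0,i\tfrac{\pi}{2}])$, whose interior contains $\pi(i\tfrac{\pi}{4})$, or the closure of its complement $\pi([0,\tfrac{\pi}{2\tau}])\cup\pi([\tfrac{\pi}{2\tau},\tfrac{\pi}{2\tau}+i\tfrac{\pi}{2}])\cup\pi([\tfrac{\pi}{2\tau}+i\tfrac{\pi}{2},i\tfrac{\pi}{2}])$, which contains $\pi(\tfrac{\pi}{4\tau})$. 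Nothing forces the path to traverse the ``short'' arc through $\pi(i\tfrac{\pi}{4})$; it may well go the other way around. This circle dichotomy is exactly why the lemma's conclusion is the disjunction ``$\chi(\nabla^{q_H})=\tfrac{\pi}{4\tau}$ \emph{or} $i\tfrac{\pi}{4}$'', and it is how the paper argues.

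Your write-up instead ties each alternative to the choice of grafting curve ($\gamma_y$ giving $i\tfrac{\pi}{4}$, $\gamma_x$ giving $\tfrac{\pi}{4\tau}$) and asserts, for a fixed grafting direction, that a \emph{specific} quarter-lattice point is hit. That is a strictly stronger claim than the lemma and is not justified by your argument (and note that if one normalizes $\pi$ as the $\wp$-function, $h(0)=\pi(0)=\infty$, so ``the real interval bounded by $h(0)$ and $h(q_G)$'' is not even a subset of $\R$). The fix is short: replace the IVT-on-an-interval step by the observation that a connected subset of the circle $\R P^1$ containing two of the four branch points must contain one of the two complementary arcs they bound, and each such arc contains the projection of one of the two quarter-lattice points $\tfrac{\pi}{4\tau}$, $i\tfrac{\pi}{4}$. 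With that correction your proof coincides with the paper's.
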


\begin{proof}
The holomorphic structure $\chi(\nabla^q)$ as a map into Jac$(T^2_\tau)$ is only well-defined up to sign.  The projection to $\C P^1$ removes this multivaluedness, and  $h$ is well-defined and continuous.
It should be noted that this is, up to normalisation, the Tu-invariant of  $\nabla^{q}$ in \cite{Lo}.
By $\eta$-invariance, i.e., Lemma \ref{lemma:real Hitchin} and Lemma \ref{tausymcon}, and using  the fact that $\pi$ maps
exactly the (real) $\eta$-invariant points of the Jacobian (i.e., $\chi$ lying in the lines given by Lemma \ref{tausymcon}) to real points (including $z=\infty$) in $\CP^1$, we obtain that $h\colon \R\to \R P^1\subset\CP^1.$

Grafting along $\gamma_y$ (or $\gamma_x$) changes the spin structure by Lemma \ref{fundamentallemmatorus}. 
The map $h$ sends the point $q=0$  to $0 \in$ Jac$(T_{\tau}^2)$ and the point
$q=q_G$ to $i\tfrac{\pi}{2} \in $Jac$(T_{\tau}^2)$ (or  to $\tfrac{\pi}{2\tau} \in $Jac$(T_{\tau}^2)$). 

By continuity 
the image of $h$ contains either the interval $\pi([0, i \frac{\pi}{2}])\subset\R P^1,$ where  $$[0, i\tfrac{\pi}{2}]:=\{ ti\tfrac{\pi}{2}\mid 0\leq t\leq 1\}$$ and analogously for the subsequent intervals, or the 
closure of its complement  (inside $\R P^1$) given by
\[\R P^1\setminus \pi((0, i \tfrac{\pi}{2}))=
\pi([0, \tfrac{\pi}{2\tau}])\cup\pi([\tfrac{\pi}{2\tau}, \tfrac{\pi}{2\tau} +i \tfrac{\pi}{2} ])\cup\pi([\tfrac{\pi}{2\tau}+i\tfrac{\pi}{2},i ]). \]

Consequently, there exists $q_H$ (and a second $\widetilde q_H$ for grafting along $\gamma_x$) in the interval between $q=0$  and $q= q_G$ such that $\chi(\nabla^{q_H})=\frac{\pi}{4\tau}$  or $\chi(\nabla^{q_H})= i\frac{\pi}{4}$ (up to sign and adding lattice points). 
\end{proof}

\begin{remark}
The Lemma shows that the holomorphic structure $H$ (see Definition \ref{sec:ass}) which lifts to the trivial holomorphic structure on the associated Riemann surface $\Sigma$ is attained whenever the weight $r$ is rational. 
Since the map $h$ depends continuously in $\tau$, the bundle type $H$ does not change for a continuous deformation of $\tau.$
\end{remark}

Lemma \ref{trivialholo} together with Remark \ref{r:trivialholo}  gives the following theorem.
\begin{theorem}\label{holoconnection}
For every rational weight $\wt{r}= \tfrac{l}{k} \in (\frac{1}{4}, \frac{1}{2})$ and every $\tau\in \R_{>0}$,
the Riemann surface $\Sigma(\tau)$ given by the $g+1$-fold covering \eqref{eqRS} or \eqref{eqnRS2} (with $g=k-1$ for k odd and $g=k/2-1$ for $k$ even) of the four punctured sphere  $\mathcal S_4(\tau)$ (as in (\ref{4ps})) admits a holomorphic connection on the trivial bundle $\underline{\C}^2 \rightarrow \Sigma(\tau)$ with SL$(2, \R)$-monodromy.
\end{theorem}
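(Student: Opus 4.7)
The plan is to assemble the main ingredients already developed in the excerpt. The Hitchin section on the one-punctured torus produces a one-parameter family $q\mapsto\nabla^q$ of flat connections with $\SL(2,\R)$-monodromy (Lemma \ref{lemma:real Hitchin}), and Lemma \ref{onto} guarantees that the abelianization holomorphic structure $\chi(\nabla^q)$ sweeps through the distinguished bundle type $H$ that Definition \ref{sec:ass} singles out as the one pulling back to the trivial holomorphic structure on $\Sigma(\tau)$. Combining this with the description of that trivialization via the covering in Lemma \ref{trivialholo} (i.e.\ the observation already flagged in Remark \ref{r:trivialholo}) will complete the argument.

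Concretely, given $\wt r=l/k\in(\tfrac{1}{4},\tfrac{1}{2})$ and $\tau\in\R_{>0}$, I first set $r=2\wt r-\tfrac{1}{2}=(4l-k)/(2k)\in(0,\tfrac{1}{2})$, which is the parabolic weight on $T^2_\tau$ at the puncture $o$. The parabolic Hitchin--Kobayashi correspondence recalled in Section \ref{sec:hitchinsec}, applied to the Higgs pair $(S\oplus S,\,r,\,l_o,\,\Phi^q)$, yields flat connections $\nabla^q$ which by Lemma \ref{lemma:real Hitchin} sit inside $\mathcal M^r_\R$ for every $q\in\R$. I then apply Lemma \ref{onto} to this family to obtain $q_H\in\R\setminus\{0\}$ with $\chi(\nabla^{q_H})=\tfrac{\pi}{4\tau}$ or $\chi(\nabla^{q_H})=i\tfrac{\pi}{4}$ (modulo sign and lattice points), and accordingly select the sign configuration $(\sigma_2,\sigma_3,\sigma_4)$ and the covering surface $\Sigma(\tau)\in\{\Sigma_g,\wt\Sigma_g\}$ as in Definition \ref{sec:ass}.

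To finish, Lemma \ref{trivialholo} (a reformulation of Lemma \ref{trivialoncovering} via Lemma \ref{Lemma:4-spin}) tells us that the pull-back $\pi_g^*\nabla^{q_H}$, suitably twisted by the $\Z_2$-connection of Lemma \ref{trivialoncovering}, has trivial underlying holomorphic structure on $\Sigma(\tau)$. The pull-back is moreover $C^\infty$-smooth because the ramification of $\pi_g$ of order $g+1$ is calibrated so as to absorb the logarithmic pole of $\nabla^{q_H}$ at $o$, the $\Z_2$-twist taking care of the sign in the residue for even $k$. Since the Dolbeault operator on the pull-back bundle is then the trivial one, flatness splits into bidegrees and forces the $(1,0)$-part to be a global holomorphic $\mathrm{End}(\C^2)$-valued $1$-form, so we obtain a genuine holomorphic connection on $\underline{\C}^2\to\Sigma(\tau)$. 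Its monodromy is the pull-back of the $\SL(2,\R)$-valued monodromy of $\nabla^{q_H}$ composed with the $\Z_2$-twist, hence remains in $\SL(2,\R)$.

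The real content has been done inside Lemma \ref{onto}: the hard part is showing that the continuous curve $q\mapsto\pi(\chi(\nabla^q))$ in $\R P^1$ actually reaches the prescribed half-lattice point, which uses the spin-structure change under grafting (Lemma \ref{fundamentallemmatorus}) together with an intermediate value argument. Once this input is granted, the proof of Theorem \ref{holoconnection} itself reduces to the bookkeeping described above, together with the routine verification that the pull-back through the branched covering produces a \emph{holomorphic} rather than merely logarithmic connection on the trivial bundle.
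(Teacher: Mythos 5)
Your proposal is correct and follows essentially the same route as the paper, whose proof is simply the observation that Lemma \ref{trivialholo} together with Remark \ref{r:trivialholo} (i.e.\ Lemma \ref{lemma:real Hitchin} plus the existence of $q_H$ from Lemma \ref{onto}, with the covering $\Sigma_g$ or $\wt\Sigma_g$ chosen according to which half-lattice point is attained) yields the theorem. You have merely spelled out the bookkeeping — in particular the smoothness of the twisted pull-back across the branch points — that the paper delegates to Lemma \ref{trivialoncovering} and the analogous argument in \cite{BDHH}.
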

\begin{remark}
Restricting to rational weights $r=\tfrac{k-1}{2k}$ we obtain a new proof of the main theorem of \cite{BDHH}, without applying the WKB analysis. In particular, we obtain holomorphic systems on compact Riemann surfaces with Fuchsian
representations that are not far out in the Betti moduli space of real representations.
\end{remark}

\begin{corollary}\label{ordering}
The triple $\nabla^U,$ $\nabla^H$ and $\nabla^G$ is ordered in the character variety, i.e., either 
$$y(\nabla^U) < y(\nabla^H) < y(\nabla^G) \quad 
\text{ or }
\quad y(\nabla^U) > y(\nabla^H) > y(\nabla^G).$$
\end{corollary}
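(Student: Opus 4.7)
\emph{Proof strategy.} The idea is to combine the Hitchin section parametrization from Section \ref{sec:hitchinsec} with the one-dimensional structure of the $\eta$-symmetric real character variety (Remark \ref{41d}) in order to transport the ordering of the parameter $q$ on the Hitchin section to an ordering of the trace coordinate $y=\Tr(Y)$.

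First, I would use the fact that the map $q\in\R\mapsto[\nabla^q]\in\mathcal{M}_{1,1}^r$ lies in $\mathcal{M}_\R^r$ by Lemma \ref{lemma:real Hitchin} together with the parabolic Hitchin--Kobayashi correspondence. By standard properties of the Hitchin section (continuity, injectivity and properness in $q$), this map is a homeomorphism of $\R$ onto one connected component of $\mathcal{M}_\R^r$. In this parametrization $\nabla^U$ corresponds to $q=0$, the grafted connection $\nabla^G$ corresponds to some $q_G\in\R_\ast$ (as recorded in Section \ref{sec:grafting1T}), and by Lemma \ref{onto} the intermediate connection $\nabla^H$ corresponds to a value $q_H$ lying strictly between $0$ and $q_G$. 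Hence the three gauge classes already appear in the order dictated by $q$ along this connected component.

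Second, I would verify that $y=\Tr(Y)$ is a global coordinate on the connected component. On any non-compact component of the $\eta$-symmetric real character variety one has $|y|>2$, and the $\eta$-symmetry relation \eqref{extraaffineeq} allows one to solve
\[
x^2\,=\,\frac{4y^2-8(1+\cos(2\pi r))}{y^2-4}\,,
\]
determining $x$ up to a sign which is fixed on the component; then $z$ is determined by \eqref{character-equation}. Thus $y$ is a continuous injection from the component into $\R\setminus[-2,2]$, and since the component is a connected $1$-manifold, $y$ is a homeomorphism onto its image.

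Composing these two homeomorphisms, the map $q\mapsto y([\nabla^q])$ is a continuous injection $\R\to\R$, hence strictly monotonic. Because $q_H$ lies strictly between $0$ and $q_G$, monotonicity forces $y(\nabla^H)$ to lie strictly between $y(\nabla^U)$ and $y(\nabla^G)$, which is precisely the desired ordering. The main obstacle is the technical step of showing that the real Hitchin section surjects onto the \emph{entire} connected component of $\mathcal{M}_\R^r$ (and not only a proper arc), which comes down to verifying that $|y(\nabla^q)|\to\infty$ as $|q|\to\infty$ and invoking the properness of the Hitchin section; once that is granted, the rest is bookkeeping.
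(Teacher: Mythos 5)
Your proposal is correct and follows essentially the same route as the paper: the paper likewise uses \eqref{extraaffineeq} to see that $y=\Tr(Y)$ is a global coordinate on each connected component of $\mathcal M_\R^r$ with image $(2,\infty)$ or $(-\infty,-2)$, observes that $q\mapsto y(\nabla^q)$ is a (strictly monotonic) diffeomorphism onto that component, and concludes from $q_H$ lying between $0$ and $q_G$ (Lemma \ref{onto}). The only difference is presentational: you isolate surjectivity of the real Hitchin section as the delicate step, whereas for the ordering itself only continuity and injectivity of $q\mapsto y(\nabla^q)$ are actually needed.
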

\begin{proof}
From \eqref{extraaffineeq},
each connected component of  $\mathcal M_{\R}^r$ admits $y= \Tr(Y)$ as  a global coordinate, where $Y$ is defined as the monodromy along $\gamma_y$ (see (\ref{gamma_x})). Moreover, the image of $y$ is either $(2,\infty)$ or $(-\infty,-2).$ 

Hence,  for any choice of spin structure on the torus
the map \[q \in \R \rightarrow  y(\nabla^q) \in \R\]
 is a diffeomorphism onto the corresponding connected component given by  $y \in (2,\infty)$ or $y\in (-\infty,-2).$ 
Therefore, the map $y(\nabla^q)$ is strictly monotonic in $q \in \R$ and the Corollary follows from $q_H$ lying between $q=0$ and $q= q_G.$
\end{proof}

\begin{remark}\label{rempm}
With the above notations,
since  $q\in\R$ (by Hitchin-Kobayashi correspondence) and $y\in(2,\infty)$  (the trace)  are both global coordinates
on every connected component of $\mathcal M^r_\R,$ the space of $\eta$-invariant conical hyperbolic structures, we have that $y(\nabla^q)$ is either strictly monotonically increasing or decreasing.  
Moreover, the conformal type for grafting $n$-times along $\gamma_x$ and $n$-times along $\gamma_y$ degenerate for $n \rightarrow \infty$ to the two different ends of $\mathcal M^r_\R$. Therefore,  either
\[y(\nabla^{q_{G,x}})<y(\nabla^{q=0})< y(\nabla^{q_{G,y}}) \quad \text{ or } \quad  y(\nabla^{q_{G,x}})>y(\nabla^{q=0})> y(\nabla^{q_{G,y}}).\]

Since the $y$-coordinate of connection $\nabla^{q_H}$ 
lies between $y(\nabla^{q_{G,x}})$ and $y(\nabla^{q=0})$,  or respectively,  $y(\nabla^{q_H})$ lies between $y(\nabla^{q_{G,y}})$ and $y(\nabla^{q=0})$, we reverse the ordering in Corollary \ref{ordering}  by grafting along $\gamma_x$ instead of $\gamma_y$.
\end{remark}

\begin{lemma}\label{localdiffeo}
Consider the  family  of 1-punctured rectangular tori $T^2_\tau$, for $\tau \in \R_{>0}$. Then the map defined using the abelianization coordinates  
$$\mathfrak M\colon \R^2 \ni (a, \tau) \rightarrow \mathcal M_{1,1}^{r,\eta}, \quad (a, \tau) \mapsto \rho(\nabla^{a, \chi_0}(\tau))$$
is a local diffeomorphism for fixed $\chi_0 = \tfrac{\pi}{4\tau}$ away from $a_0=-\tfrac{\pi}{4\tau}.$
\end{lemma}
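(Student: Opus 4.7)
The map $\mathfrak{M}$ goes between spaces of real dimension $2$, so being a local diffeomorphism is equivalent to the differential being invertible. The plan is to introduce local coordinates on $\mathcal M_{1,1}^{r,\eta}$ given by the real traces $(x,y) = (\Tr(X),\Tr(Y)) \in \R^2$: by Lemma \ref{tausymcon2} these are indeed real on the $\eta$-invariant locus, and away from the reducible/branching set they give real-analytic coordinates. The problem then reduces to showing that the Jacobian $J(a,\tau) = \det \partial(x,y)/\partial(a,\tau)$ vanishes exactly on $\{a=a_0\}$.

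First I would write $X,Y$ explicitly in the abelianization form \eqref{abel-connection}. The diagonal piece $\nabla^L = d + a\,dw + \chi_0\, d\bar w$ contributes holonomies $\mu_x = \exp(a+\chi_0)$ along $\gamma_x$ and $\mu_y = \exp(\tau i(a-\chi_0))$ along $\gamma_y$, with $\chi_0 = \pi/(4\tau)$. The off-diagonal terms $\gamma^\pm_{\chi_0}$ are meromorphic with simple poles at $o$ of residues determined by $r$; they depend on $\chi_0$ and $\tau$ but \emph{not} on $a$. Substituting into the monodromy matrices along $\gamma_x$ and $\gamma_y$ gives $x$ and $y$ as explicit combinations of $\mu_x, \mu_y$ and $\tau$-dependent period integrals of the $\gamma^\pm_{\chi_0}$.

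Next I would compute $\partial_a(x,y)$ and $\partial_\tau(x,y)$. The $a$-derivative is clean: $\partial_a \mu_x = \mu_x$, $\partial_a \mu_y = \tau i\,\mu_y$, and the off-diagonal parts contribute nothing. The $\tau$-derivative has two sources, the explicit dependence of $\chi_0(\tau) = \pi/(4\tau)$ (with $\partial_\tau \chi_0 = -\chi_0/\tau$) and the variation of the underlying Riemann surface structure, which enters through $\gamma^\pm_{\chi_0}$ and through $\mu_y$. I would expect $J(a,\tau)$ to factor as $(a+\chi_0)$ times a generically nonvanishing real-analytic function of $(a,\tau)$, so that its zero locus is precisely $a+\chi_0 = 0$, i.e.\ $a = -\pi/(4\tau) = a_0$.

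Geometrically, $a=a_0$ is the unique value for which $\nabla^L$ is the flat \emph{unitary} connection on the line bundle $L$: writing $A = a\,dw + \chi_0\,d\bar w$ with $\chi_0 \in \R$, the condition $A + \bar A = 0$ forces $a = -\chi_0$, which is precisely $a_0$. At this value an additional discrete symmetry of the full connection $\nabla^{a_0,\chi_0,r}$ (compare with the list of symmetries in Lemma \ref{tausymcon}) collapses the deformation directions $\partial_a \mathfrak{M}$ and $\partial_\tau \mathfrak{M}$ onto a single tangent vector of $\mathcal M_{1,1}^{r,\eta}$, giving an intrinsic explanation for the degeneracy. The hardest part will be the explicit bookkeeping of $\partial_\tau \gamma^\pm_{\chi_0(\tau)}$, since the $\tau$-dependence enters implicitly through the theta/Weierstrass kernels on $T^2_\tau$; a cleaner route, which I would try first, is to identify the singular locus directly from the extra symmetry at $a=a_0$ and then only verify non-degeneracy of $J$ along a single transverse line.
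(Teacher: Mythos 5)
Your overall framing (reduce the statement to nonvanishing of a $2\times 2$ Jacobian in the trace coordinates $(x,y)$) is reasonable, but the proposal has a genuine gap at its central step: you never establish that $J(a,\tau)\neq 0$ for $a\neq a_0$. The assertion that $J$ ``factors as $(a+\chi_0)$ times a generically nonvanishing real-analytic function'' is an expectation, not an argument, and the proposed shortcut --- identify the singular locus from the symmetry at $a=a_0$ and then ``verify non-degeneracy of $J$ along a single transverse line'' --- is logically insufficient: a real-analytic function of two variables that is nonzero along one line can still vanish along a one-dimensional set inside $\{a\neq a_0\}$, and the Lemma is needed at \emph{every} such point (it feeds the continuation argument in Section \ref{endgame}, which cannot tolerate isolated degeneracies). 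The input to your computation is also shakier than you suggest: the traces $x,y$ of the monodromy of $\nabla^{a,\chi_0,r}$ are \emph{not} functions of the diagonal holonomies $\mu_x,\mu_y$ together with $a$-independent ``period integrals of $\gamma^\pm_{\chi_0}$''. The parallel transport of the non-diagonal connection \eqref{abel-connection} mixes the diagonal and off-diagonal parts through a path-ordered exponential, and the resulting closed formulas (see \cite{HeHe}, \cite{BDH}) have theta-function coefficients in which $a$ and $\chi$ do not separate in the way your sketch assumes; so ``the off-diagonal parts contribute nothing'' to $\partial_a(x,y)$ is unjustified. You do correctly locate the exceptional set: at $a=a_0=-\chi_0$ the connection pulls back to the reducible (diagonal, unitary) system, and the curve $\tau\mapsto(a_0(\tau),\chi_0(\tau),\tau)$ is isomonodromic, so $d\mathfrak M$ genuinely acquires a kernel there --- but the mechanism is isomonodromy, not a discrete symmetry collapsing two deformation directions.

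For contrast, the paper's proof avoids all explicit trace computations. It complexifies to $\wt{\mathfrak M_\C}(a,\chi,\tau)$, observes via Riemann--Hilbert that the kernel of its differential is exactly the one-dimensional tangent space to isomonodromic deformations, and reduces the Lemma to transversality of that kernel to the slice $\chi=\chi_0(\tau)$. Transversality is then extracted from Loray's theorem \cite{Lo}: along an isomonodromic family the bundle-type invariant $\wt q=\pi(\chi)$ solves Painlev\'e VI, a second-order ODE, so the family is determined by the initial value $\wt q(t_0)$ and the initial derivative. The slice family at $a=a_0$ is itself isomonodromic with invariant $\wt q_H$; any other isomonodromic family through a point of the slice with $a\neq a_0$ has the same initial value but different monodromy, hence a different initial derivative, so $\chi-\chi_0$ has a simple zero --- which is precisely the required transversality, at every point with $a\neq a_0$ simultaneously. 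To salvage your computational route you would have to carry out the theta-function Jacobian computation in full and show its zero locus is exactly $\{a=a_0\}$; this may be feasible but is substantially more delicate than the uniqueness-of-ODE-solutions argument the paper uses.
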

\begin{remark}\label{imaginary_iso_def}
An analogue Lemma also holds when choosing $\chi_0 = i\tfrac{\pi}{4}$ and $a \in i \R$ away from $a_0=i\tfrac{\pi}{4}.$
\end{remark}

\begin{proof}
Consider the map
$$\wt{\mathfrak M_\C}\colon \C^3 \ni (a, \chi, \tau) \rightarrow \mathcal M_{1,1}^r, \quad (a,\chi,  \tau) \mapsto \rho(\nabla^{a, \chi}(\tau))$$ 
 from $\C^3$ to the complex 2-dimensional space $\mathcal M^r_{1,1}.$ By the Riemann-Hilbert correspondence and the fact that, for fixed  conformal type $\tau$, the variables  $(a, \chi)$ are coordinates of $\mathcal M_{1,1}^r$ (away from half lattice points of $\chi$),
the kernel of the differential is only $1$-dimensional, and can be computed using the differential of isomonodromic deformations.

To show that $\mathfrak M$ is a local diffeomorphism, it suffices to prove that  
$$\mathfrak M_\C\colon \C^2 \ni (a, \tau) \rightarrow \mathcal M_{1,1}^r, \quad (a, \tau) \mapsto \rho(\nabla^{a, \chi_0}(\tau))$$ 

is an immersion at all points  $(a,\tau) \in \C^2$, with   $a \neq -\tfrac{\pi}{4\tau}$ (i.e., without restricting to the $\eta$-invariant subspaces). Indeed, Lemma \ref{tausymcon} then implies that the image of $\mathfrak M_{\C}$ restricted to real points $(a, \tau) \in \R^2$ lies  in the real 2-dimensional submanifold $\mathcal M_{1,1}^{r,\eta} \subset M_{1,1}^r$.

Since $\mathfrak M_\C$ is obtained from $\wt{\mathfrak M_\C}$ by fixing the $\chi$-coordinate, it suffices to show the kernel of $d\wt{\mathfrak M_\C}$ is transversal to the slice $\chi = \chi_0.$ The proof uses a result by  Loray \cite{Lo}. Be aware that the parameter $\tau$ used here is the conformal type of the torus $T^2_\tau,$ while the parameter $t \in \C P^1$ in \cite{Lo} is the conformal type of the $4$-punctured sphere with punctures $\{0, 1, \infty, t\}$. The transformation from $\tau$ to $t \in \C P^1$ is  a local diffeomorphism since the conformal type of the torus is non-degenerated.

First note that, for every fixed $\tau$, the parameters $\chi_0(\tau) = 
\tfrac{\pi}{4\tau}$ and $a_0(\tau)=-\tfrac{\pi}{4\tau}$ define a reducible connection on 
the 4-punctured torus $\Sigma_2(\tau) = \C/(2\Z \oplus 2 \tau i\Z)$ corresponding to a 
specific fixed irreducible representation of the 1-punctured torus $T^2_\tau$. This reducible connection is 
the tensor product of the pull-back of $D$ from the 4-punctured sphere to the 
four-punctured torus with a $\Z_2$ line bundle connection. Therefore, when varying 
$\tau$, these reducible connections define an isomonodromic deformation.

For isomonodromic deformations, the bundle type $\widetilde q(t)$ (see\cite[Corollary 10 and 
Section 5.8.4]{Lo}), determined by $\pi(\chi)$ (referred to as the Tu-invariant in 
\cite{Lo}) solves the Painlev\'e VI equation in $t$ away from four values of $\wt q(t)$ 
\footnote{The Painlev\'e VI equation is a second order ODE, and the four exceptional 
points corresponds to $\chi$ being a half lattice point of Jac$(T^2_\tau)$}. Therefore the 
flat connections (as a family in $t$) are determined by the initial value $\widetilde q(t_0)$ 
and the initial direction $\tfrac{d}{dt}|_{t=t_0} \widetilde q$ (provided by $p$; see 
\cite[Theorem 8]{Lo}).

Hence the $t$-family of connections corresponding to $\chi_0(\tau)= \tfrac{\pi}{4\tau}$ and  $a_0(\tau)= -\tfrac{\pi}{4\tau}$  determines a  solution $\wt q_{H}(t)$ of the second order 
Painlev\'e  VI equation.  Note that though this family of  connections are reducible on the 4-punctured torus, they are irreducible on the 1-punctured $T^2_\tau$ on which the Theorem of Loray holds. 
Let $\wt q(t)$ be another solution for a different isomonodromic deformation at $t_0$ 
(where $t_0$ corresponds to $\tau_0$)
with initial value $\wt q (t_0)\,=\, \wt q_H(t_0).$ Then $\wt Q= \wt q - \wt q_H$ must have a simple zero at $t_0,$ i.e., $\tfrac{d}{dt}|_{t= t_0} \wt Q \neq 0,$ since the connections are determined by $\widetilde q(t_0)$ and  the initial direction $\tfrac{d}{dt}|_{t=t_0}
\widetilde q$ \cite[Theorem 8]{Lo}.

Since $\widetilde q\,=\,\pi(\chi)$,  we have that $\chi -\chi_0$ corresponding to the isomonodromic deformation obtained by $\widetilde q$ has simple zero at $\tau_0$,  implying transversality away from $a=-\tfrac{\pi}{4\tau}$. 
\end{proof}

\subsection{Proof of the main Theorem}\label{endgame}

Let  $\rho$ be the given RSR-representation. It defines a point $\widetilde\rho$ (in fact there are 4 preimages) in the character variety of  $\eta$-symmetric  and real representations $\mathcal M_{\R}^r$ on the 1-punctured torus. The aim is to show that there exist a $\tau \in \R_{>0}$  such that  $\widetilde \rho$ can be realized as the monodromy representation of a logarithmic  connection $\nabla^H(\tau)$  on 
the parabolic bundle over $T^2_\tau$ determined by $H,$ see Definition \ref{sec:ass},
 corresponding to the trivial holomorphic bundle on the covering $\Sigma$.

Recall that the trace $y = \Tr (Y)$ of the monodromy $Y$ along $\gamma_y$  is a global coordinate on each connected component 
of $\mathcal M_{\R}^r$. Consider the coordinate $y(\widetilde \rho)$  of our given element. 
Without loss of generality we restrict in the following to the connected component with $y (\rho)>2$ and $x(\rho)>2$.

Choose a representation $\rho_0\in \mathcal M_{\R}^r$ with $y(\rho_0)< y(\widetilde\rho)$.
By Section \ref{sec:grafting1T}, there is a (rectangular) conformal type $\tau^{uG}_0>0$ of the 1-punctured torus such that the orbifold uniformization connection $\nabla^U (\tau^{uG}_0)$ has monodromy representation $\rho_0.$
Grafting  $\nabla^U (\tau^{uG}_0)$ once along $\gamma_y$ yields a new projective structure given by a connection 
$\nabla^G(\tau_0),$ where $\tau_0>0$ is the rectangular conformal type obtained from grafting $T^2_{\tau^{uG}_0}$.

Assume further without loss of generality
that we have $y(\nabla^U(\tau_0))  < y(\nabla^G(\tau_0))$ at $\tau_0$. If this assumption does not hold for simple-grafting
 along $\gamma_y$, it holds for simple-grafting along $\gamma_x$, and the remainder of the proof works with the same arguments, see Remark \ref{rempm}. Then there exists a $\nabla^H(\tau_0) = \nabla^{q_H}(\tau_0),$ see  Lemma \ref{onto},  on the parabolic bundle determined by $H\in$ Jac$(T^2_{\tau_0})$ with
\begin{equation}\label{eqin}
y(\nabla^U(\tau_0)) \,<\,y(\nabla^H(\tau_0)) \;<\, y(\nabla^G(\tau_0))
\,=\, y(\rho_0)\,<\, y(\widetilde\rho).\end{equation}

Since $y$ is a global coordinate on each connected component of $\mathcal M_{\R}^r,$ the above  inequality remains true for continuous deformations of the connections
induced by a continuous deformation of the conformal type $\tau$. Since $\nabla^U(\tau)$ and $\nabla^G(\tau)$ depends continuously on $\tau$ by Lemma \ref{Lemma:fullcomponent} and Lemma \ref{teichho}, the inequality holds for a deformation in $t\mapsto\tau(t)$, if there exist a corresponding continuous deformation
\[t\mapsto\nabla^H(a(t),\tau(t))\] with prescribed underlying parabolic structure $H(\tau(t))$, see the definition of $\mathfrak M$ in Lemma \ref{localdiffeo}. 
Locally such a deformation of  $\nabla^H(\tau_0)$ exists, since $\mathcal M^r_\R$ is a real 1-dimensional submanifold of $\mathcal M_{1,1}^{r,\eta},$ on which  $(a, \tau)$ are local  coordinates by Lemma \ref{localdiffeo}.

In the following, we use \[t=y\]
as deformation parameter, i.e., the family $t\mapsto \nabla^H(a(t),\tau(t))$
of logarithmic connections on the prescribed parabolic bundle H over the 1-punctured torus given by $\tau(t)$
satisfies
\[y(\nabla^H(a(t), \tau(t)))=t.\]
We call such a  continuous (and therefore smooth) family {\em admissible}
if additionally  the following condition hold
\[ \tau(t_0)=\tau_0 \quad\text{and}\quad \nabla^H(a(t_0),\tau(t_0))=\nabla^H(\tau_0),\]
 for $t_0=y_0:=y(\nabla^H(\tau_0)).$ The aim is to show
that 
\[y_{max}:=\text{sup} \left\{y\in\R_{\geq y_0}\mid \exists \text{ an admissible
family } t \mapsto \nabla^H(a(t),\tau(t)) \text{ with } t\in  [y_0,y) \right\}\]
satisfies
\[y_{max}= \infty> y(\widetilde \rho).\]

By construction we therefore have
\begin{equation}\label{eqin2}
y(\nabla^U(\tau(t))) <y(\nabla^H(t,\tau(t))) \;< y(\nabla^G(\tau(t))).\end{equation}
 for all $t\in[y_0,y_{max}).$ Let us assume that $y_{max}<\infty.$  Since $\mathcal M_\R^r$ satisfies \eqref{extraaffineeq} we obtain $x_{max}\in (2,\infty)$ is finite as well. Moreover,  the corresponding conformal type
 \[\tau_{sup}:=\limsup_{t\to y_{max},t<y_{max}} \tau(t)\in\R^{>0} < \infty\]
is finite  by \eqref{eqin2}  using Lemma \ref{teichho}).

Next, by definition of $\nabla^H(a(t),\tau(t))$ 
together with the fact that $(a,\chi)$ are coordinates on the moduli space,
 there is either a
real  or purely imaginary function $a(t),$ see Lemma \ref{tausymcon}, such that
\[\nabla^H(t,\tau(t))=\nabla^{a(t),H,r}\]
on the 1-punctured torus $T^2_{\tau(t)}$ is real and $\eta$-invariant. Assume that 
\[a_{sup}:=\sup_{t\to y_{max},t<y_{max}}|a(t)|=\infty.\]
Using the WKB analysis of Mochizuki in \cite[Appendix]{BDHH} along $\gamma_x$ if $a(t)$ is real, or along  $\gamma_y$ if $a(t)$ is purely imaginary, with respect to the diagonal Higgs field

$$\Phi_t = 
a(t)  \begin{pmatrix} dw & 0\\0&-dw\end{pmatrix}$$

 we  obtain that, up to taking a suitable subsequence for which the
 conformal type converges to $\tau_{sup}$, either 
\[x(\nabla^H(a(t),\tau(t))) \rightarrow\infty\quad  \text{ or }\quad y(\nabla^H(a(t),\tau(t))) \rightarrow \infty \]
which is a contradiction. Therefore $y_{max} = \infty$ concluding the proof.
\qed

\section{The dodecahedral example}

It is well-know that there
exists exactly 4 compact, regular, and space-filling tessellations of the hyperbolic 3-space. They were first described by Coxeter in \cite{Co1,Co2}. Here we are interested in
the order-4 dodecahedral honeycomb with Schlaefli symbol $\{5,3,4\}$. The fundamental domain of this tessellation is the regular  dodecahedron with  dihedral angle $\pi/2$. There are four dodecahedra around each edge  and eight dodecahedra around each vertex in an octahedral arrangement. Note that, in contrary to the case of dihedral angle $2 \pi/5$ which leads to the construction of  a Seifert-Weber hyperbolic 3-manifold,   the  cocompact discrete subgroup of $\text{PSL}(2, \C)$, constructed by identification of opposite pairs of faces of  the regular dodecahedron with dihedral angle  $\pi/2$,  admits  nontrivial  torsion. Though the  $\pi/2$-dodecahedron is not  a  fundamental domain for  any discrete  torsion-free subgroup in $\text{PSL}(2, \C)$, there are discrete groups  with torsion in $\text{PSL}(2, \C)$ which admit  the $\pi/2$-dodecahedron  as  its fundamental domain.  Groups containing torsion-free subgroups of finite (small) index  can be determined using the Reidemeister-Schreier method \cite{Be}.

A barycentric subdivision of the regular dodecahedron cuts it into $120$ copies of its characteristic cell. In  our case where the dihedral angle is  $\pi/2$, the characteristic cell is the  tetrahedron $\mathbb T$  with dihedral angles 
\[\tfrac{\pi}{2},\tfrac{\pi}{2},\tfrac{\pi}{2},\tfrac{\pi}{5},\tfrac{\pi}{3},\tfrac{\pi}{4}.\] 

\begin{figure}[h]
\includegraphics[scale=0.12]{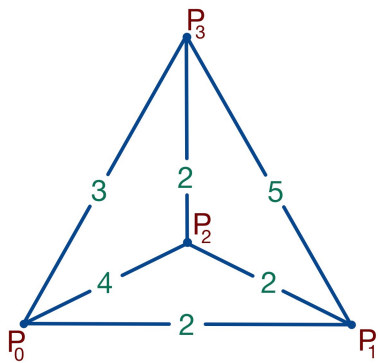}
\caption{The fundamental $(5,3,4)$ tetrahedron.}\label{fig:Tet}
\end{figure}

The aim is to use $\mathbb T$ to explicitly writing down a RSR-representation $\rho$ that is compatible with the cocompact lattice given by the dodecahedral tiling of $\mathbb H^3.$
For computational convenience,   we use the hyperboloid model of the hyperbolic 3-space. To fix notations, consider the Lorentzian space
 \[\R^{3,1}=\left\{h=\begin{pmatrix} x_0+x_1&x_2+i\, x_3\\x_2-i\, x_3&x_0-x_1\end{pmatrix}
\,\mid\,\,  \overline{h}^T\,=\,h \right \}\]
equipped with its  canonical indefinite inner product $(.,.)$ of signature $(1,3).$  The quadratic form of $(.,.)$ is 
the negative of the determinant.

Then the hyperbolic 3-space is given by
\begin{equation}
\begin{split}
\mathbb H^3&=\{(x_0,x_1,x_2,x_3)\mid-x_0^2+x_1^2+x_2^2 +x_3^2=-1,\; x_0>0\}\\
&=\left\{h=\begin{pmatrix} x_0+x_1&x_2+i\, x_3\\x_2-i\, x_3&x_0-x_1\end{pmatrix} \mid \; \text{det}(h)=1; \,  \overline{h}^T=h; \, \text{Tr}(h)>0\right\}
\end{split}
\end{equation}
endowed with the Riemannian metric induced by  the quadratic form $q.$ The submanifold $$\mathbb H^2 \,=\{(x_0,x_1,x_2,0 )\mid-x_0^2+x_1^2+x_2^2=-1,\; x_0>0\}$$ endowed 
with the induced Riemannian metric is totally geodesic and a copy of the hyperbolic 2-plane.

Recall that there exist a natural double covering of the connected component of the identity of the isometry group of $\mathbb H^3$ by SL$(2, \C)$

\[\mathrm{SL}(2,\C)\to\mathrm{SO}(3,1)\subset \mathrm{Isom}(\mathbb H^3)\]
 given by the right action
\[(h,g)\in \mathbb H^3 \times\mathrm{SL}(2,\C)\longmapsto h\cdot g= \bar g^Thg\in\mathbb H^3.\]

Restricting this map to the real subgroup $\mathrm{SL}(2,\R) \subset  \mathrm{SL}(2,\C)$ preserves $\mathbb H^2 \subset \mathbb H^3$ and defines a double covering
$$\mathrm{SL}(2,\R)\to\mathrm{SO}(2,1)\subset \mathrm{Isom}(\mathbb H^2).$$

Consider  the hyperbolic   tetrahedron in $\mathbb H^3$ defined by the following  4 vertices \footnote{We choose $\frac{1}{5^{1/4}}\in\R^{>0}$, and also $\sqrt{x}>0$ for all $x>0$.}
\begin{equation*}
\begin{split}
P_0&=(1,0,0,0)\\
 P_1&= \left(\sqrt{1+\tfrac{2}{\sqrt{5}}},-\frac{1}{5^{1/4}},-\frac{1}{5^{1/4}},0 \right)\\
P_2&=\left(\sqrt{\tfrac{1}{2}(3+\sqrt{5})},-\sqrt{\tfrac{2}{-1+\sqrt{5}}},0,0 \right)\\
P_3&=\left(\;\tfrac{1}{2}\sqrt{7+3\sqrt{5}},-\tfrac{1}{2}\sqrt{1+\sqrt{5}},-\tfrac{1}{2}\sqrt{1+\sqrt{5}},\tfrac{1}{2}\sqrt{1+\sqrt{5}}\;  \right)
\end{split}
\end{equation*}
 with
(geodesic triangle) faces 
\begin{equation*}
\begin{split}
&T_0\subset\text{span}\{P_1,P_2,P_3\}\cap \mathbb H^3\\
&T_1\subset\text{span}\{P_0,P_2,P_3\}\cap \mathbb H^3\\
&T_2\subset\text{span}\{P_0,P_1,P_3\}\cap \mathbb H^3\\
&T_3 \subset\text{span}\{P_0,P_1,P_2\}\cap \mathbb H^3\subset \mathbb H^2.\\
\end{split}
\end{equation*}
Each face has  a unit length normal, unique up to sign, given by
\begin{equation*}
\begin{split}
L_0&=\left(\,\tfrac{1}{\sqrt{1+\sqrt{5}}},-\tfrac{1}{2}\sqrt{3+\sqrt{5}},0,0\, \right)\\
L_1&=\left(\,0,0,\tfrac{1}{\sqrt{2}},\tfrac{1}{\sqrt{2}}\, \right)\\
L_2&=\left(\,0,-\tfrac{1}{\sqrt{2}},\tfrac{1}{\sqrt{2}},0\, \right)\\
L_3&=(\,0,0,0,1\,).\\
\end{split}
\end{equation*}
Computing the angles between $L _k,$\, $k=0,\,...,\,3$ the tetrahedron constructed here is in fact $\mathbb T$, see Figure \ref{fig:Tet}.  The corresponding Coexter group 
is generated by the reflections $R_k$ across the faces $T_k$ for $k\,=\,0,...,\,3,$ i.e., 
\[R_k(v)=v-2(v,L_k)L_k, \quad v\in \R^{3,1}.\]
Consider  its order 2 subgroup  $\mathfrak{G}$ consisting of orientation preserving transformations generated by
\[G_{m,n}:=R_mR_n, \quad 0\leq m<n\leq 3.\]

We denote by $\Gamma$ the subgroup of SL$(2, \C)$  given by the preimage of $\mathfrak{G}$ through  the above double cover.
Its generators are determined by choosing lifts $g_{k,l}$ of $G_{k,l}$ 
\begin{equation}
\begin{split}
g_{0,1}&:=\begin{pmatrix} 0&-\tfrac{1+i}{4}\left(1+\sqrt{5}+\sqrt{2(-1+\sqrt{5})}\right)\\
\frac{(2-2i)}{(1+\sqrt{5}+\sqrt{2(-1+\sqrt{5})}}&0\end{pmatrix}\\
&\\
g_{0,2}&:=
\begin{pmatrix} -\tfrac{1}{2}\sqrt{1+\sqrt{5}-\sqrt{2(1+\sqrt{5})}}& \left( -\sqrt{1+\sqrt{5}-\sqrt{2(1+\sqrt{5})}}\right)^{-1} \\
\tfrac{1}{2}\sqrt{1+\sqrt{5}-\sqrt{2(1+\sqrt{5})}}& \left(-\sqrt{1+\sqrt{5}-\sqrt{2(1+\sqrt{5})}}\right)^{-1}\end{pmatrix}\\
&\\
g_{0,3}&:=\begin{pmatrix} 0&-i\sqrt{\tfrac{1}{2}(1+\sqrt{5}+\sqrt{2(1+\sqrt{5})})}\\\frac{-i}{\sqrt{\tfrac{1}{2}(1+\sqrt{5}+\sqrt{2(1+\sqrt{5})})}}&0\end{pmatrix}\\
&\\
g_{1,2}&:=\frac{1}{2}\begin{pmatrix} -1+i&1+i\\-1+i&-1-i\end{pmatrix}\\
&\\
g_{1,3}&:=\frac{1}{\sqrt{2}}\begin{pmatrix} -1-i&0\\0&-1+i\end{pmatrix}\\
&\\
g_{2,3}&:=\frac{-i}{\sqrt{2}}\begin{pmatrix} 1&1\\1&-1\end{pmatrix}.\\
\end{split}
\end{equation}

Note that all $g_{k,l}$ are of finite order, e.g., $g_{0,2}$ is of order $5$. Consider
\[j_0:=g_{1,2}g_{1,3}=\frac{1}{\sqrt{2}}\begin{pmatrix}1&-1\\1&1\end{pmatrix} \in \text{SL}(2, \R)\]
and
\[J_1:=-(g_{0,2})^2  \in \text{SL}(2, \R),\] and
define
\begin{equation}
\begin{split}
J_2&:=j_0J_1j_0^{-1}\\
J_3&:=j_0J_2j_0^{-1}\\
J_4&:=j_0J_3j_0^{-1}.\\
\end{split}
\end{equation}
Note that $j_0$ is of order 8 and $J_1,\dots,J_4$ are all of order 10. A direct computation then shows
\[J_4J_3J_2J_1=\text{Id}.\]

\begin{theorem}\label{thm9.1}
The representation $\rho \,\colon\, \pi_1(S_4,\,s_0)\,\longrightarrow\, \text{SL}(2, \C)$ of the
4-punctured sphere $S_4$ given by
\[\rho(\gamma_k)\,=\,J_k\]
is a genus $4$ RSR-representation compatible with the cocompact  lattice $\Gamma \subset \mathrm{SL}(2, \C).$
\end{theorem}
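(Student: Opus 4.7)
The plan is to verify, in turn, the three bullet points of Definition \ref{RSR-def}, the closing relation $J_4J_3J_2J_1=I$, the genus formula of Definition \ref{RSR-genus-def}, and the fact that the induced RL-representation takes values in the cocompact lattice $\Gamma$.

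My first step is to exploit the symmetry built into the construction: $J_k=j_0^{\,k-1}J_1\,j_0^{-(k-1)}$ for $k=1,\dots,4$. Since $j_0$ and $g_{0,2}$ both have real entries (the latter because the nested radical $1+\sqrt5-\sqrt{2(1+\sqrt5)}$ is positive, as is seen numerically and then confirmed symbolically), every $J_k$ lies in $\SL(2,\R)$ and the four matrices are mutually $\SL(2,\R)$-conjugate. Computing $\Tr(g_{0,2})$ directly from the displayed form gives $-2\cos(\pi/5)$, so the eigenvalues of $g_{0,2}$ are $-e^{\pm i\pi/5}$, and those of $J_1=-g_{0,2}^2$ are $e^{\pm 3\pi i/5}=e^{\pm 2\pi i\tilde r}$ with $\tilde r=\tfrac{3}{10}\in(\tfrac14,\tfrac12)$. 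This establishes the \textbf{Symmetric} clause of Definition \ref{RSR-def}, and simultaneously identifies the common order of the $J_k$ as $10$; since $10$ is even, Definition \ref{RSR-genus-def} gives the genus $10/2-1=4$.

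I would obtain the \textbf{Rectangular} condition essentially for free via
\[
M_2M_4=(j_0 M_1 j_0^{-1})(j_0^3 M_1 j_0^{-3})=j_0\bigl(M_1\cdot j_0^2 M_1 j_0^{-2}\bigr)j_0^{-1}=j_0(M_1M_3)j_0^{-1},
\]
which already forces $\Tr(M_2M_4)=\Tr(M_1M_3)$. The closing identity $J_4J_3J_2J_1=I$ becomes, after conjugating by a suitable power of $j_0$ and using $j_0^{\,8}=I$, a finite symbolic identity in $J_1$ and $j_0$ which can be verified by direct matrix multiplication. The main obstacle, in my view, lies in the \textbf{Real} inequalities
\[
\Tr(M_1M_2)<-2,\qquad \Tr(M_2M_3)<-2,\qquad \Tr(M_1M_3)<-2.
\]
Here I see no shortcut beyond computing the three products $J_1J_2$, $J_2J_3$, $J_1J_3$ explicitly, via $M_2=j_0M_1j_0^{-1}$ and $M_3=j_0^{\,2}M_1j_0^{-2}$, and then simplifying the resulting algebraic numbers in $\mathbb Q(\sqrt2,\sqrt5,\sqrt{1+\sqrt5})$. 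I would carry this out with computer algebra and extract unambiguous numerical upper bounds on each trace.

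For the compatibility statement: by construction $g_{0,2},\,j_0=g_{1,2}g_{1,3}\in\Gamma$, and $-I\in\Gamma$ because $\Gamma$ is the full preimage of $\mathfrak G$ under the double cover $\SL(2,\C)\to\SO(3,1)$. Hence $J_1=-g_{0,2}^{\,2}\in\Gamma$ and therefore $J_k\in\Gamma$ for every $k$. By Lemma \ref{trivialoncovering} the induced RL-representation $\widehat\rho\colon\pi_1(\Sigma,x_0)\to\SL(2,\R)$ takes values in the subgroup generated by the image of $\rho$ together with the central $\Z_2$-factor, both already contained in $\Gamma$. Finally, cocompactness of $\Gamma\subset\SL(2,\C)$ is the classical fact that the $\{5,3,4\}$-dodecahedral Coxeter group is a cocompact lattice in $\mathrm{Isom}(\mathbb H^3)$, preserved by passage to the index-$2$ orientation-preserving subgroup $\mathfrak G$ and to its double cover in $\SL(2,\C)$.
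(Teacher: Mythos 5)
Your proposal is correct and follows essentially the same route as the paper: verify the \textbf{Real}, \textbf{Symmetric} and \textbf{Rectangular} clauses by explicit trace computations in $\mathbb{Q}(\sqrt{2},\sqrt{5},\sqrt{1+\sqrt{5}})$, read off the genus from the order $10$ of the $J_k$, and observe that the $J_k$ (and $-I$) lie in $\Gamma$ by construction. Your structural shortcuts --- obtaining Rectangular from $\Tr(M_2M_4)=\Tr\bigl(j_0(M_1M_3)j_0^{-1}\bigr)$ and Symmetric from the eigenvalues $-e^{\pm i\pi/5}$ of $g_{0,2}$ --- are slightly cleaner than the paper's direct evaluations but do not change the argument.
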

\begin{proof}
The group $\Gamma$ (the preimage of $\mathfrak G$) is a cocompact lattice of $\mathrm{SL}(2,\C)$, since
it corresponds  (up to a reflection) to the tessellation
of $\mathbb H^3$ by copies of the tetrahedron $\mathbb T$. Since $J_k$ lies in SL$(2, \R)$ for all $k \,=\, 1,\,\cdots,\, 4$, and 
\[\text{tr}(J_2J_1)=\text{tr}(J_3J_2)=-1-\sqrt{5}<-2, \quad \text{tr}(J_3J_1)=-\tfrac{3}{2}(1+\sqrt{5})<-2,\]
 the image of the representation $\rho$ defines a {\bf real} lattice $\widehat\Gamma$ in $\mathrm{SL}(2,\R)$. Moreover, $\widehat\Gamma$ is {\bf symmetric} with $\wt r= \tfrac{3}{10} \in (\tfrac{1}{4}, \tfrac{1}{2})$, since
\[\text{tr}(J_k)=\frac{1}{2}(1-\sqrt{5})=2\cos\left(2 \pi\tfrac{3}{10} \right) \quad \text{for}  \quad k=1,...,4,\]

and {\bf rectangular} by
\[ \text{tr}(J_3J_1)\,=\,\text{tr}(J_4J_2)\,=\,-\tfrac{3}{2}(1+\sqrt{5}).\]
Since the order of the $J _j$ is 10, the genus of the representation is $4$ by Definition \ref{RSR-genus-def}.
\end{proof}

Applying Theorem \ref{maintheorem} we obtain:
\begin{corollary}
There exist a cocompact lattice $\Gamma$ in $\mathrm{SL}(2,\C)$ and a compact Riemann surface $\Sigma$ of genus $g=4$
admitting a holomorphic map $f\colon \Sigma\to \mathrm{SL}(2,\C)/\Gamma$ which does not factor through a curve of lower genus.
\end{corollary}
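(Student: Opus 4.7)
The corollary follows by combining Theorem \ref{thm9.1} with the Main Theorem (Theorem \ref{maintheorem}), supplemented by a Riemann--Hurwitz argument to strengthen ``does not factor through a torus'' to ``does not factor through a curve of lower genus.''

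In detail, Theorem \ref{thm9.1} supplies a cocompact lattice $\Gamma\subset\SL(2,\C)$ coming from the $\{5,3,4\}$ dodecahedral tessellation of $\mathbb H^3$ together with a genus-$4$ RSR-representation $\rho\colon \pi_1(S_4,s_0)\to\Gamma$. This is exactly the input required by the Main Theorem, which then produces a compact Riemann surface $\Sigma$ of genus $4$ of the form \eqref{Sigmaa} and a holomorphic map $f\colon \Sigma\to\SL(2,\C)/\Gamma$ whose monodromy is the induced RL-representation $\widehat\rho$. Since $\widehat\rho$ is non-abelian by the irreducibility of $\rho$, the Main Theorem already guarantees that $f$ does not factor through an elliptic curve.

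To rule out factorization through Riemann surfaces of genus $2$ or $3$, I would invoke the Riemann--Hurwitz argument indicated in the remark following Theorem \ref{pentagon}. The explicit $5$-fold covering $\Sigma\to\CP^1$ of \eqref{eqRS} is totally ramified over the four points $p_1,\dots,p_4$, and a local computation at these points shows that $f$ inherits exactly four simple branch points at the preimages of the $p_j$. If $f$ were to factor non-trivially as $f = h\circ \pi$ with $\pi\colon\Sigma\to\Sigma'$ of degree $d\geq 2$ and $\mathrm{genus}(\Sigma')\in\{2,3\}$, then Riemann--Hurwitz applied to $\pi$ would give
\[
6 \;=\; d\bigl(2\,\mathrm{genus}(\Sigma')-2\bigr) + R(\pi),
\]
and the requirement that each of the four simple branch points of $f$ be accounted for either in $R(\pi)$ or in the pullback of the ramification of $h$ produces a contradiction in each case.

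The subtle step is the identification of the four simple branch points of $f$. This requires tracking the logarithmic connection $\nabla^H$ on the parabolic bundle over the $1$-punctured torus (with local exponent $r=\tfrac{1}{10}$, so $\widetilde r=\tfrac{3}{10}$) through the ramified pullback to $\Sigma$ provided by Lemma \ref{trivialoncovering}, and analyzing the behavior of the corresponding parallel frame map at the four ramification points. Granted this branch-point computation, the rest of the argument is a routine case analysis.
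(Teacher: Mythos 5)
Your overall route coincides with the paper's: the corollary is deduced there in one line by applying Theorem \ref{maintheorem} to the RSR-representation produced in Theorem \ref{thm9.1}, and the strengthening from ``does not factor through a torus'' to ``does not factor through a curve of lower genus'' rests, exactly as in your proposal, on the remark following Theorem \ref{pentagon} that the holomorphic map has four simple branch points together with Riemann--Hurwitz. So you have located the content correctly, and you are also right to flag that the identification of the four simple branch points (at the preimages of $p_1,\dots,p_4$ under the totally branched $5$-fold cover) is the step requiring an actual computation with $\nabla^H$ and the singular gauge of Lemma \ref{trivialoncovering}.

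The genuine gap is in your claim that the case analysis ``produces a contradiction in each case.'' Writing the vanishing divisor of $df$ for a factorization $f=h\circ\pi$ as $(df)=R(\pi)+\pi^*(dh)$, so that $4=\deg R(\pi)+d\cdot\deg(dh)$ with $\deg R(\pi)=6-d\,(2\,\mathrm{genus}(\Sigma')-2)\geq 0$, one does rule out genus $0$ and $1$ (where $\deg R(\pi)\geq 6>4$), genus $3$ (where $d\geq 2$ forces $\deg R(\pi)<0$), and genus $2$ with $d=3$ (where $4=3\deg(dh)$ has no integral solution). But for genus $2$ with $d=2$ the identity reads $4=2+2\cdot 1$: a degree-two map $\pi$ onto a genus-two curve with two simple ramification points, composed with an $h$ having a single simple branch point away from the branch locus of $\pi$, reproduces exactly four simple branch points. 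Riemann--Hurwitz and branch-point counting alone therefore do not exclude this case. To close it one needs additional input --- for instance, that $\Sigma$ admits no involution with exactly two fixed points (necessarily among the four ramification points of $\Sigma\to\C P^1$) whose fibers are collapsed by $f$, or the generic injectivity of $f$ asserted in the paper's abstract, which is essentially equivalent to the statement being proved. The paper's own remark is equally terse on this point, but since your proposal explicitly asserts a contradiction in every case, the missing argument for the degree-two, genus-two case is a gap you would need to fill.
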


\subsection{Outlook: CMC 1 surfaces in hyperbolic 3-manifolds}
Surfaces with constant mean curvature $H$  in Riemannian  3-manifolds  are the critical points of the area functional with fixed enclosed volume.
By the Lawson correspondence \cite{La}, CMC 1 surfaces (i.e., surfaces with $H=1$) in hyperbolic 3-space  are (locally) in one-to-one correspondence with minimal surfaces in $\mathbb R^3.$ The latter can be explicitly parametrized in terms of holomorphic data
 by the classical Weierstrass representation.
 
 \begin{figure}[h]
\includegraphics[scale=0.25]{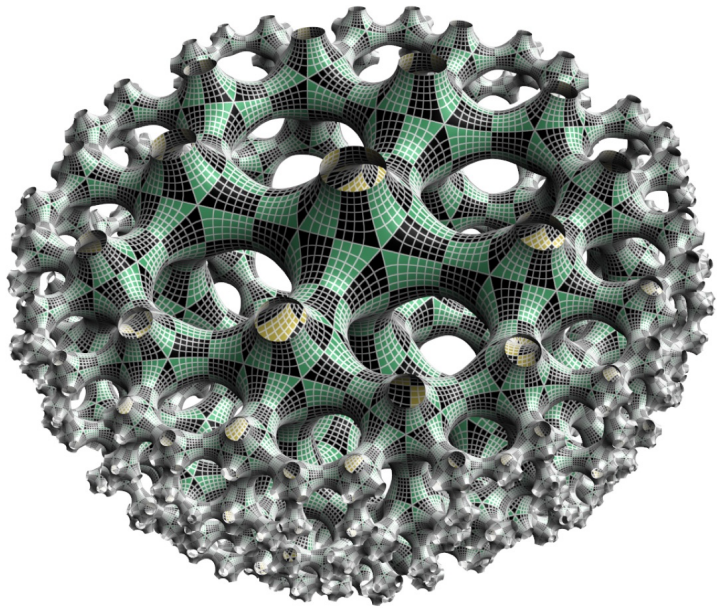}
\caption{An equivariant CMC 1 surface compatible with the dodecahedron tesselation of $\mathbb H^3$. The shown lines are the trajectories 
of a holomorphic quadratic differential of the underlying compact Riemann surface of genus 3.
Image by Nick Schmitt.}\label{fig:Tetsurface}
\end{figure}
 
As first noticed by Bryant \cite{Bry}, see also \cite{UY}, there exists a local Weierstrass representation of CMC 1 surfaces in $\mathbb H^3$ as well.
Consider for this a nilpotent nowhere vanishing $\mathfrak{sl}(2,\mathbb C)$-valued  holomorphic 1-form $\Phi$ on a Riemann surface $\Sigma.$ The frame $F$ is a solution of the ordinary differential equation  $dF=\Phi F$. Then the conformal immersion $f$ (defined on the universal covering of $\Sigma$) given by 
$f=\bar F^TF$  has constant mean curvature $1$ in the hyperbolic 3-space. In invariant terms, every CMC 1 surface is given by a flat unitary connection  $\nabla$ together with a nilpotent Higgs field $\Phi,$ see \cite{Pir}.  The frame $F$ is then a parallel frame with respect to the flat connection $\nabla-\Phi.$
The unitary connection $\nabla$ gives rise to an associated flat unitary bundle $V \rightarrow \Sigma$ whose transition functions are determined by the local unitary frames of $\nabla.$ Since the representation formula is not sensitive with respect to appropriate changes of the parallel unitary frames, the choice of the unitary frame does not alter the CMC 1 immersion.
We say a CMC 1 surface in a hyperbolic manifold admits a {\em  simple Weierstrass representation}, if the corresponding flat unitary bundle is trivial. The following theorem holds.
\begin{theorem}
A CMC 1 surface $f\colon \Sigma\to\mathbb H^3/\Gamma$ into a hyperbolic 3-manifold $\mathbb H^3/\Gamma$ admits a simple Weierstrass representation if and only if it is the projection of a holomorphic curve
$F\colon\Sigma\to\mathrm{SL}(2,\mathbb C)/\Gamma.$
\end{theorem}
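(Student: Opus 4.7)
The equivalence rests on the observation that the flat unitary bundle $V$ of the Weierstrass data measures precisely the obstruction to lifting the Bryant frame from the universal cover $\tilde{\Sigma}$ to $\Sigma$ itself. The two directions are proved by constructing $F$ from the Weierstrass data and, conversely, reading off the Weierstrass data from $F$.

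For the direction $(\Rightarrow)$, suppose the flat unitary bundle $V\to\Sigma$ is trivial. Fix a global flat unitary trivialization, so $\nabla=d$ and the nilpotent Higgs field $\Phi$ becomes a globally defined holomorphic $\mathfrak{sl}(2,\mathbb{C})$-valued $(1,0)$-form on $\Sigma$. Solving the flat equation $d\tilde F=\Phi\tilde F$ on the universal cover produces a holomorphic map $\tilde F\colon\tilde{\Sigma}\to\mathrm{SL}(2,\mathbb{C})$, unique up to right multiplication by a constant. The deck transformations act by right multiplication through some $\rho\colon\pi_1(\Sigma)\to\mathrm{SL}(2,\mathbb{C})$, and the hypothesis that $f=\overline{\tilde F}^{\,T}\tilde F$ descends to $\mathbb{H}^3/\Gamma$ forces $\rho$ to take values in (the preimage in $\mathrm{SL}(2,\mathbb{C})$ of) $\Gamma$. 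Hence $\tilde F$ descends to a holomorphic curve $F\colon\Sigma\to\mathrm{SL}(2,\mathbb{C})/\Gamma$ whose projection to $\mathbb{H}^3/\Gamma$ equals $f$.

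For the direction $(\Leftarrow)$, begin with a holomorphic $F\colon\Sigma\to\mathrm{SL}(2,\mathbb{C})/\Gamma$ whose projection to $\mathbb{H}^3/\Gamma$ is the given CMC $1$ surface $f$. Lift it to a holomorphic $\Gamma$-equivariant $\tilde F\colon\tilde{\Sigma}\to\mathrm{SL}(2,\mathbb{C})$ satisfying $\tilde F(\gamma\cdot x)=\tilde F(x)\rho(\gamma)$ with $\rho(\gamma)\in\Gamma$, and set $\Phi:=(d\tilde F)\,\tilde F^{-1}$. A direct computation using $\tilde F(\gamma\cdot x)=\tilde F(x)\rho(\gamma)$ shows that the right-invariant form $(d\tilde F)\tilde F^{-1}$ is invariant under the right $\Gamma$-action, so $\Phi$ descends to a globally defined holomorphic $\mathfrak{sl}(2,\mathbb{C})$-valued $(1,0)$-form on $\Sigma$. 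Taking $V=\Sigma\times\mathbb{C}^2$ with the trivial flat unitary connection $\nabla=d$, the pair $(\nabla,\Phi)$ constitutes Weierstrass data for $f$: by construction $\tilde F$ is parallel for $\nabla-\Phi$, so $\overline{\tilde F}^{\,T}\tilde F$ recovers $f$, while nilpotency of $\Phi$ follows from the CMC $1$ assumption on $f$. As $V$ is trivial, this is a \emph{simple} Weierstrass representation.

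The main subtlety I expect is the $\pm I$ ambiguity in the covering $\mathrm{SL}(2,\mathbb{C})\to\mathrm{Isom}^{+}(\mathbb{H}^3)$: in $(\Rightarrow)$ the descent of $f$ to $\mathbb{H}^3/\Gamma$ controls $\rho$ only modulo $\pm I$, so securing $\rho(\pi_1(\Sigma))\subset\Gamma$ rather than $\rho(\pi_1(\Sigma))\subset\Gamma\cup(-\Gamma)$ requires either that $-I\in\Gamma$ or an argument passing through a suitable double cover. A secondary point, needed for coherence between the two directions, is the uniqueness (up to $\mathrm{SU}(2)$-gauge freedom in the choice of unitary frame) of the Weierstrass data of a CMC $1$ surface: this ensures that the simple representation reconstructed in $(\Leftarrow)$ is indeed the one canonically attached to $f$ and not some different presentation.
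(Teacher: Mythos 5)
The paper states this theorem in its outlook section with no proof whatsoever (the sentence ``The following theorem holds'' is followed directly by the statement), so there is no argument of the authors to compare yours against. Your proof is the natural one and is essentially correct: identifying the flat unitary bundle $(V,\nabla)$ as precisely the obstruction to descending the Bryant frame from $\widetilde\Sigma$ to $\Sigma$ is the right mechanism, and both implications are carried out soundly --- in $(\Rightarrow)$ a global $\nabla$-parallel unitary frame makes $\Phi$ a global holomorphic $\mathfrak{sl}(2,\C)$-valued $1$-form, so the deck group acts on $\widetilde F$ by right multiplication alone and $\widetilde F$ descends to $\mathrm{SL}(2,\C)/\Gamma$; in $(\Leftarrow)$ the form $(d\widetilde F)\widetilde F^{-1}$ is invariant under the right $\Gamma$-action and yields global Weierstrass data with $\nabla=d$, with nilpotency of $\Phi$ equivalent to conformality of the projection.

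The two loose ends you flag are genuine but both close in a few lines, so they do not constitute a gap in the approach. For the sign ambiguity: the pointwise stabilizer in $\mathrm{SL}(2,\C)$ of the image of $\widetilde f$ (a two-dimensional immersed surface, hence not contained in a geodesic) is exactly $\{\pm\mathrm{Id}\}$, so $\rho$ takes values in $\pm\Gamma$; in the paper's dodecahedral example $\Gamma$ is the full preimage in $\mathrm{SL}(2,\C)$ of a lattice in $\mathrm{PSL}(2,\C)$, so $-\mathrm{Id}\in\Gamma$ and no further argument is needed, and in general the statement should be read modulo this convention. For the canonicity of the data (needed because the definition refers to \emph{the} corresponding flat unitary bundle): two null holomorphic lifts of the same simply connected CMC $1$ surface differ by $\widetilde F\mapsto U\widetilde F$ with $\overline{U}^{\,T}U=\mathrm{Id}$ and $U$ holomorphic, hence $U$ is a constant in $\mathrm{SU}(2)$; consequently the unitary factor $U(\delta)$ in $\widetilde F(\delta\cdot x)=U(\delta)\widetilde F(x)\rho(\delta)$ is well defined up to overall conjugation and sign, and the isomorphism class of the flat unitary bundle is canonically attached to $f$. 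With these two remarks inserted, your argument is a complete proof of a statement the paper leaves unproved.
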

Even though 
 we have shown the existence of holomorphic curves into $\mathrm{SL}(2,\mathbb C)/\Gamma$ where 
$\Gamma$ is the dodecahedron tesselation group, and  there exists compact CMC 1 surfaces in $\mathbb H^3/\Gamma$, see Figure \ref{fig:Tetsurface}, it is unclear whether there exist  compact  CMC 1 surfaces in $\mathbb H^3/\Gamma$ with simple Weierstrass representation, since the Higgs fields of the holomorphic curves we constructed here are not nilpotent.

\end{document}